\documentclass[a4paper,11pt]{article}
\usepackage{amsmath}
\usepackage{amssymb}
\usepackage{mathrsfs}
\usepackage{amsfonts}
\usepackage{amsthm}
\usepackage{pstricks, pst-node, pst-text, pst-3d,psfrag}
\usepackage{graphicx}
\usepackage{float}
\usepackage{indentfirst}
\usepackage{enumerate}
\usepackage{subfigure}
\usepackage{tikz}
\usepackage{tikz-cd}
\usepackage{subcaption}
\usepackage{anyfontsize}
\usetikzlibrary{arrows.meta, positioning}
\usepackage{cite}
\usepackage[colorlinks=true]{hyperref}
\hypersetup{urlcolor=blue, citecolor=red}

\usepackage{soul, color, xcolor} 

\theoremstyle{plain}
\newtheorem{theorem}{Theorem}[section]

\newtheorem{proposition}{Proposition}[section]
\newtheorem{corollary}{Corollary}[section]
\newtheorem{lemma}{Lemma}[section]
\theoremstyle{definition}
\newtheorem{definition}{Definition}[section]

\newtheorem{remark}{\textbf{Remark}}[section]

\numberwithin{equation}{section}

\makeatletter 
\@addtoreset{equation}{section}
\makeatother  

\newtheorem{mainthm}{Theorem}[section]

\begin{document}


\title{Differential positivity and dynamical order in\\ noisy  oscillators under unidirectional coupling}

\setlength{\baselineskip}{16pt}

\author{
Bochun Chang, Xiaofang Lin
~and Yi Wang\vspace{0.2cm}\\
School of Mathematical Sciences\\
University of Science and Technology of China\\
Hefei, Anhui, 230026, P. R. China
}


\date{}
\maketitle

\begin{abstract}

We focus on a stochastic system that models a collection of $N$ identical oscillators with unidirectional coupling, perturbed by white noise. The unidirectional coupling breaks the symmetry of mutual dependence among the oscillators. It is shown that the associated random dynamical system $\phi$ admits a dynamical order, meaning that $\phi$ admits a simple asymptotic one-dimensional structure that is totally ordered with respect to the standard order in $\mathbb{R}^N$. Our approach takes a novel geometric perspective: we introduce a random dynamical system $\Phi$ on a smooth Riemannian manifold $M$, diffeomorphic to $\mathbb{S}^1 \times \mathbb{R}^{N-1}$, by ``wrapping'' the random system $\phi$ from $\mathbb{R}^N$ onto $M$. By choosing an appropriate random cone field $\mathcal{C}_M$ on $M$, we show that $\Phi$ is a differentially positive random system on $M$, which is a random counterpart of the differentially positive systems introduced by Forni and Sepulchre. We further demonstrate that the traditional well-known horizontal curves can be identified as the conal curves on $M$, thereby providing a crucial tool for establishing the dynamical order of the random system $\phi$.
\vskip 3mm

\par
\textbf{Keywords}:   Differential positivity,  Dynamical order, Random cone field, Random conal curve, Invariant cone
\end{abstract}

\par \quad \quad \textbf{AMS Subject Classification (2020)}: 37H30, 37C65, 53C30, 60H10, 37D10

\section{Introduction}

In the last four decades, there has been considerable interest in the study of coupled Josephson junctions since it appears in a variety of applied problems including quantum physics, neural networks and chemical reactions (see \cite{ASSBGSD26,CSS10,Kuaramoto26,MOHSBZOS15,MSSPZFH23,SLKSKCS17}). Qian et al. \cite{QianShenZhang90} studied two identical  oscillators coupled with each other under damping and coupling. This model was later generalized in \cite{QianZhuQin97} to a system of $N$ identical overdamped oscillators with nearest-neighbor coupling (cf. Figure \ref{intro-figline-ngbr}). As demonstrated in \cite{QianZhuQin97}, such a system can produce a global attractor that is essentially one-dimensional, indicating that all oscillators will ultimately share the same frequency over an extended period. Subsequently, Qian et al. \cite{QianQinWangZhu00} further studied sine-Gordon equation under Neumann boundary condition (for more details, see \cite{Temam97}), which turned out to be a continuous version of coupled Josephson junctions. They showed that the global attractor is one-dimensional under strong damping and diffusing.
 
    Since thermal fluctuations in Josephson junctions cannot be ignored, a Gaussian white noise term is required to account for their stochastic driving (see, e.g., \cite{ambegaokarHalperin69}). Chow et al. \cite{ChowShenZhou07} first investigated a system of $N$ identically coupled oscillators (as in Figure \ref{intro-figline-ngbr}) subject to white noise. More precisely, they considered the following system of stochastic equations:
    \begin{equation}\label{intro-eqnChow}
        \mathrm{d}\psi=kT\psi \mathrm{d}t+ \begin{pmatrix} \alpha_{1} - \sin\psi_{1} \\ \cdots \\ \alpha_{N} - \sin\psi_{N} \end{pmatrix}\mathrm{d}t+
        \begin{pmatrix}  \epsilon_{1}\mathrm{d}W_1(t) \\ \cdots \\  \epsilon_{N}\mathrm{d}W_N(t) \end{pmatrix},
    \end{equation}
    where $\psi=(\psi_{1},\ldots,\psi_{N})^{\top}$ represents the $N$-angular variables, and $k>0$ is the coupling coefficient. The parameters $\alpha_{i}$ and $ \epsilon_{i}>0$  $(1\le i\le N)$, are input frequencies and  perturbation coefficients, respectively. $T$ is a tridiagonal matrix with main diagonal $(-1,-2,\ldots,-2,-1)$ and off diagonals $(1,\ldots,1)$; and $(W_{i})_{i=1}^N$ is the standard $N$-dimensional Brownian motion. 

     Chow et al. \cite{ChowShenZhou07} have observed and obtained a dynamical order induced by coupling/noise in systems of coupled oscillators. To our knowledge, such a phenomenon can be understood from two perspectives in an integrated manner: on the one hand, it shows that the dynamics of the stochastic system admits an intuitive, simple asymptotic one-dimensional structure, that is, the global random attractor possesses a one-dimensional topological structure; on the other hand, it reveals what can be called a ``total order structure" with respect to the standard order ``$\le$" in $\mathbb{R}^{N}$ induced by the quadrant $\mathbb{R}^{N}_{+}$. As a matter of fact, it deserves to point out that system \eqref{intro-eqnChow} is actually a positive feedback (also called cooperative) system of stochastic equations. Consequently, according to the works of Arnold and Chueshov (see \cite{ArnoldChueshov98,ArnoldChueshov01a,ArnoldChueshov01b,Chueshov02}), such stochastic equations can generate monotone random systems. A fundamental feature of both deterministic and random monotone systems is the so-called exponential separation property (see \cite{LianWang17,PolacikTerescak93,Ruelle79}), governed by the principal one-dimensional direction (also termed principal Floquet spaces in \cite{MierczynskiShen08,MierczynskiShen13a,MierczynskiShen13b,MierczynskiShen16}). From our point of view, this feature provides a theoretical basis for expecting the dynamical order in the positive feedback stochastic system \eqref{intro-eqnChow}. 

    In the present paper, we mainly focus on the stochastic system that models a system of unidirectionally coupled oscillators (cf. Figure \ref{intro-figline-uni})  perturbed by white noise. Compared with nearest-neighbor coupling in Figure \ref{intro-figline-ngbr},  the unidirectional coupling breaks the symmetry of mutual dependence, relying as it does exclusively on the preceding element. In fact, unidirectional coupling inherently falls within the broader framework of nonreciprocal systems which encompasses acoustic, elastic and quantum materials, optoelectronics, photovoltaics (see, e.g., \cite{Nassaretal20,TokuraNagaosa18}). An experimentally accessible instance of such nonreciprocal behavior in a platform is the unidirectionally coupled Josephson junction, as realized in a superconducting device \cite{Ranzanietal17}.
    \begin{figure}[H]
        \begin{minipage}[t]{0.48\textwidth}
            \centering
            \begin{tikzpicture}[circled/.style={draw=blue, circle, minimum size=0.6cm, inner sep=0pt, very thick, text=blue}]
                \node[circled] (1) {\normalsize$1$};
                \node[circled, right=0.8cm of 1] (2) {\normalsize$2$};
                \node[circled, right=0.8cm of 2] (N-1) {\footnotesize$N$-1};
                \node[circled, right=0.8cm of N-1] (N) {\footnotesize$N$};
                \draw[-Latex, orange] ([yshift=2pt]1.east) -- ([yshift=2pt]2.west);
                \draw[-Latex,  orange] ([yshift=-2pt]2.west) -- ([yshift=-2pt]1.east);
                \draw[-Latex, dashed, orange] ([yshift=2pt]2.east) -- ([yshift=2pt]N-1.west);
                \draw[-Latex,dashed,orange] ([yshift=-2pt]N-1.west) -- ([yshift=-2pt]2.east);
                \draw[-Latex,  orange] ([yshift=2pt]N-1.east) -- ([yshift=2pt]N.west);
                \draw[-Latex,orange] ([yshift=-2pt]N.west) -- ([yshift=-2pt]N-1.east);
            \end{tikzpicture}
            \caption{Nearest-neighbor coupling}
            \label{intro-figline-ngbr}
        \end{minipage}
        \hfill
        \begin{minipage}[t]{0.48\textwidth}
            \centering
            \begin{tikzpicture}
            [circled/.style={draw=blue, circle, minimum size=0.6cm, very thick, inner sep=0pt,text=blue}]
                \node[circled] (1) {\normalsize$1$};
                \node[circled, right=0.8cm of 1] (2) {\normalsize$2$};
                \node[circled, right=0.8cm of 2] (N-1) {\footnotesize$N$-$1$};
                \node[circled, right=0.8cm of N-1] (N) {\footnotesize$N$};
                \draw[-Latex, orange] (1.east) -- (2.west);
                \draw[-Latex,  dashed,orange] (2.east) -- (N-1.west);
                \draw[-Latex, orange] (N-1.east) -- (N.west);
            \end{tikzpicture}
            \caption{Unidirectional coupling}
            \label{intro-figline-uni}
        \end{minipage}
    \end{figure}

    We will make an attempt to investigate a dynamical order induced in stochastic system \eqref{intro-eqn-psi} of unidirectionally coupled oscillators. For this purpose, we consider the following  unidirectional coupling system of $N$ identical oscillators driven by additive white noise:
    \begin{equation}\label{intro-eqn-psi}
		\mathrm{d}\psi=kL\psi \mathrm{d}t+ \begin{pmatrix} \alpha_{1}-\sin\psi_{1} \\ \cdots \\ \alpha_{N}-\sin\psi_{N} \end{pmatrix}\mathrm{d}t+
		\begin{pmatrix}  \epsilon_{1}\mathrm{d}W_1(t) \\ \cdots \\  \epsilon_{N}\mathrm{d}W_N(t) \end{pmatrix} ,
	\end{equation}
	where the coupling matrix is

	\begin{equation*}\label{eq-M1}
		L=\begin{pmatrix}
			0 &  & &  \\
			1 & -1 & &  \\
			& \ddots & \ddots &  \\
			& & 1& -1   \\
		\end{pmatrix}.
	\end{equation*}

    Before proceeding further, we first return to the random dynamical system on $\mathbb{R}^N$ generated by system \eqref{intro-eqnChow} with nearest-neighbor coupling. In this context, to obtain the dynamical order, Chow et al. \cite[p.1015]{ChowShenZhou07} cleverly constructed a random family of so-called horizontal curves (first introduced in \cite{QianShenZhang88,QianZhuQin97}). Roughly speaking, a horizontal curve is a non-steep smooth curve in the sense that its Lipschitz constant is uniformly bounded both from above and below. The core idea of their work is to show that each fiber of the global random attractor of the random dynamical system on $\mathbb{R}^N$ generated by system \eqref{intro-eqnChow} is a horizontal curve. A key insight is that their construction of the random family of horizontal curves depends heavily on the bidirectional coupling inherent to stochastic system \eqref{intro-eqnChow}, where each oscillator interacts with both its successor and predecessor (see more details in Remark \ref{do-rmk-difference}).  

    However, for the unidirectional coupling stochastic system \eqref{intro-eqn-psi} in the present paper, there is no such mutual dependence among oscillators: each oscillator is influenced only by its predecessor and receives no influence from its successor. Consequently,  the analytic approach in \cite{ChowShenZhou07,QianShenZhang88,QianZhuQin97} cannot be applied to the corresponding random dynamical system $\phi$ on $\mathbb{R}^N$ generated by  system \eqref{intro-eqn-psi}. This is precisely the situation we encounter when addressing the dynamical order in stochastic system \eqref{intro-eqn-psi}.

    To overcome such difficulties, we move beyond the mindset of conventional analytic estimates and reframe the construction of horizontal curves from a novel geometric viewpoint. More precisely, due to the periodicity of the vector field, we introduce a random dynamical system $\Phi$ (see \eqref{prly-eqn-tildephi}) on a smooth manifold $M$, diffeomorphic to $\mathbb{S}^{1}\times\mathbb{R}^{N-1}$, by ``wrapping"  the random system $\phi$ from $\mathbb{R}^N$ onto $M$. We will show that, by choosing an appropriate random cone field $\mathcal{C}_{M}$ (see \eqref{eq-cone-field}) on $M$, the random system $\Phi$
    turns out to be a differentially positive random system (see Definition \ref{dp-def-dp}), which is a random version of differentially positive systems. Such systems, first introduced by Forni and Sepulchre \cite{Forni15,ForniSepulchre14,ForniSepulchre16}, are nonlinear systems whose linearization along trajectories preserves a cone field on a smooth Riemannian manifold. For a more comprehensive treatment of this topic, we refer the reader to the very recent works in \cite{MostajeranSepulchre18a,MostajeranSepulchre18b,NiuWang25,NiuZhangWang26}. Among the essential concepts of differentially positive systems is the notion of the conal curve, which is a smooth curve whose tangent vector lies in the cone at every point along the curve wherever it is defined (see, e.g.,  \cite[p.5141]{NiuWang25}). More importantly, we point out that the horizontal curves can be identified as the conal curves (see Definition \ref{dp-def-conalcurve}) on $M$, thereby providing a critical tool to investigate dynamical order in stochastic system \eqref{intro-eqn-psi}. Based on such a geometric viewpoint, we present the main result as follows:

    \begin{mainthm}\label{intro-mainthm}
        \textit{The random system $\Phi$ on $M$ possesses a  global random attractor $\mathcal{A}$. Moreover, if the coupling coefficient $k$ in stochastic system \eqref{intro-eqn-psi} is sufficiently large, then the following hold:
        \begin{itemize}
            \item [{\rm(i)}] $\mathcal{A}$ admits a one-dimensional topological structure;  
            \item [{\rm(ii)}] $\mathcal{A}$ is a closed random conal curve on $M$ with respect to $\mathcal{C}_{M}$.
        \end{itemize}}
    \end{mainthm}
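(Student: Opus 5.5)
The plan is to reduce the stochastic system to a random ODE, prove existence of the attractor by dissipativity, and then extract the one-dimensional conal structure from differential positivity of the linearized flow with a strictly contracting transversal direction.

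\emph{Step 1: conjugation and absorbing set.} Introduce the stationary Ornstein--Uhlenbeck process $z(\theta_t\omega)$ solving $\mathrm{d}z=kLz\,\mathrm{d}t+\epsilon\,\mathrm{d}W$. Since $L$ has eigenvalues $0,-1,\dots,-1$, the first component is degenerate. I would therefore use a shifted OU process $\mathrm{d}z=(kL-I)z\,\mathrm{d}t+\epsilon\,\mathrm{d}W$ in order to have a tempered stationary solution. The substitution $u=\psi-z(\theta_t\omega)$ then yields a random ODE
\[
\dot u = kLu + F(u+z(\theta_t\omega)) + z(\theta_t\omega),
\]
where $F$ is $2\pi$-periodic in each coordinate along the diagonal direction $\mathbf{1}=(1,\dots,1)^{\top}$ (indeed $L\mathbf{1}=0$). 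Hence $\phi$ commutes with the deck translation $\psi\mapsto\psi+2\pi\mathbf{1}$, and $\Phi$ is well defined on $M=\mathbb{R}^N/2\pi\mathbb{Z}\mathbf{1}\cong\mathbb{S}^1\times\mathbb{R}^{N-1}$.

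For the attractor, write $\psi=s\mathbf{1}+v$ with $v$ in the complement (for instance, the differences $v_i=\psi_i-\psi_{i-1}$, $i\ge2$). The difference variables satisfy
\[
\dot v_i=-k v_i+(\text{bounded terms})+(\text{noise differences}),
\]
so they form a triangular linear system with spectrum $-k$, perturbed by bounded forcing. This gives a tempered random absorbing set, compact in $M$ because the $\mathbb{S}^1$ factor is compact. The standard existence theorem of Crauel--Flandoli then yields $\mathcal{A}$, and this part needs no largeness of $k$.

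\emph{Step 2: differential positivity.} The linearization along trajectories is
\[
\dot\xi = \bigl(kL - \mathrm{diag}(\cos\psi_i)\bigr)\xi .
\]
I would take $\mathcal{C}_M$ to be a constant (or $\omega$-dependent) cone around $\mathbf{1}$:
\[
\mathcal{C}=\{\xi:\ |\xi_i-\xi_{i-1}|\le \delta\,|\xi_1| \ \text{for } i\ge2,\ \xi_1\ge0\}
\]
or a suitable weighted version of it, whose exact form should be matched to \eqref{eq-cone-field}. Along the cone, the differences $\eta_i=\xi_i-\xi_{i-1}$ obey
\[
\dot\eta_i=-k\eta_i-\cos\psi_i\,\xi_i+\cos\psi_{i-1}\,\xi_{i-1}=-k\eta_i+O(|\xi_1|+|\eta|),
\]
while $\dot\xi_1=-\cos\psi_1\,\xi_1$ changes $|\xi_1|$ only at rate at most $1$. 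Therefore the ratio $|\eta|/|\xi_1|$ decays at rate roughly $k-C$ on the boundary of the cone, so for large $k$ the cone is forward invariant, and in fact strictly contracted after any positive time. This is the uniform differential positivity given by Definition \ref{dp-def-dp}.

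\emph{Step 3: structure of the attractor.} I would identify horizontal curves, meaning graphs over the $\mathbf{1}$-direction with small slope, as conal curves. A closed conal curve winding once around $\mathbb{S}^1$, for example the image of the line $\mathbb{R}\mathbf{1}$, is mapped by $\Phi(t,\theta_{-t}\omega)$ to a closed conal curve by Step 2. The transversal directions contract at rate $e^{-(k-C)t}$, which can be verified from the difference equations for two trajectories. Consequently these pulled-back curves form a Cauchy family in the Hausdorff metric, their limit is a Lipschitz closed conal curve, and it must coincide with $\mathcal{A}(\omega)$, since the attractor is the pullback limit of the absorbing set and the absorbing set is collapsed onto the curve by the transversal contraction. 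This proves (i) and (ii).

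The main obstacle is this last identification: showing that the whole attractor, not just a subset, lies on the limiting curve. The plan is to use the squeezing estimate
\[
|v(t)-\tilde v(t)|\le e^{-(k-C)t}\,|v(0)-\tilde v(0)|
\]
for pairs of trajectories whose $s$-coordinates agree. To control this, I would first parametrize each point of the absorbing set by the point of the conal curve having the same $\mathbb{S}^1$-projection. This requires the conal curves to stay graphs over $\mathbb{S}^1$ under the flow, which follows from cone invariance since the cone excludes vertical directions. The uniformity of $C$ in $\omega$ follows from the boundedness of $\cos$. The temperedness of $z$ enters only Step 1.
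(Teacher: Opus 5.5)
There is a genuine gap, and it sits in Step 2. Your formula for $\dot\eta_i$ drops the feed-forward term. For $i\ge 3$,
\[
\dot\eta_i=-k\eta_i+k\eta_{i-1}-\cos\psi_i\,\xi_i+\cos\psi_{i-1}\,\xi_{i-1},
\]
and $k\eta_{i-1}$ is of the same order as $-k\eta_i$. This is exactly the non-diagonalizable Jordan structure of $kL$ that unidirectional coupling creates. So $|\eta|/|\xi_1|$ does not decay at rate $k-C$ on the boundary, and your uniform cone is not invariant for any $k$.

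Already for $N=3$, take $\xi=(1,1+\delta,1+2\delta)$ with $\cos\psi_1=\cos\psi_2=1$ and $\cos\psi_3=-1$. The $k$-terms cancel and $\frac{d}{dt}(\delta\xi_1-\eta_3)=-2-4\delta<0$, so the vector leaves the cone.

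Deferring the ``exact form'' to \eqref{eq-cone-field} is therefore not a formality. It is the whole content of Proposition \ref{do-prop-cone}, and (ii) is a statement about that specific $\mathcal{C}_M$. The missing idea is that the apertures must grow with $i$, so that the incoming term is absorbed by the neighbouring constraint. On the face $\xi_{i+1}=\beta_i\xi_i$ one uses $\xi_i\le\beta_{i-1}\xi_{i-1}$ to bound $-k\xi_{i-1}\le-(k/\beta_{i-1})\xi_i$. This gives $(2-k/\beta_{i-1}+k/\beta_i)\xi_i$, which is $\le0$ precisely because $k(\beta_{i-1}^{-1}-\beta_i^{-1})=2$; the $\gamma_i$-faces work symmetrically. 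This invariance is borderline, not the strict contraction you claim.

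Second, even granting cone invariance, Step 3 only produces a Lipschitz limit curve. Definition \ref{dp-def-conalcurve} requires a $C^1$ parametrization with $\frac{d}{d\tau}l(\tau,\omega)\in\mathcal{C}_M^{\omega}(l(\tau,\omega))$ pointwise. The paper gets this regularity from a separate source. For $k\ge k_0$ it builds, by Lyapunov--Perron, a $C^1$ one-dimensional center-unstable manifold $W^{cu}(\omega)=\{\xi+h(\xi,\omega)\}$, which is a graph over the first coordinate. It then shows via the stable foliation that $W^{cu}$ is the unwrapped attractor (Propositions \ref{mfld-prop-mfld} and \ref{do-prop-mfld=ator}). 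Only after that does it push a closed conal curve forward, apply Arzel\`a--Ascoli, pass the cone condition for the closed convex cone $\mathscr{C}$ to the limit, and use that a circle contained in the circle $\mathcal{A}(\omega)$ must be all of it. You would need that smoothness input, or a genuine projective-contraction argument on cones, which $\mathcal{C}_M$ does not supply uniformly.

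Your squeezing identification of $\mathcal{A}$ is a reasonable and more elementary substitute for the foliation argument, but only if $s$ is the first coordinate. Since $\dot\phi_1$ depends on $\phi_1$ alone, equal first coordinates persist in time. The remaining components then contract like a polynomial in $kt$ times $e^{-(k-1)t}$ (again the Jordan prefactor, not a clean $e^{-(k-C)t}$). If $s$ is the mean instead, agreement of $s$ is not preserved and your estimate does not apply.
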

    A direct consequence of Theorem \ref{intro-mainthm} is that the unidirectional coupling stochastic system \eqref{intro-eqn-psi} admits \textit{a dynamical order}: $\mathcal{A}$ is not only a one-dimensional global random attractor, but also \textit{the unwrapping of $\mathcal{A}$ from $M$ to $\mathbb{R}^{N}$ is an unbounded smooth curve on each fiber that is totally ordered with respect to the standard order ``$\le$" in $\mathbb{R}^{N}$} (see Corollary \ref{dp-coro-do}).

    This theorem, in a detailed version, will be presented in Section \ref{section-dp} (Theorems \ref{mainth-a} and \ref{mainth-b}), and will be proved in Section \ref{section-pfmainth-b} and Section \ref{section-pfmainth-a}. Our proof of Theorem \ref{intro-mainthm}(i) is motivated by \cite{ChowShenZhou07}, which utilizes the theory of invariant foliation to show that the unwrapping of $\mathcal{A}$ from $M$ to $\mathbb{R}^{N}$ coincides with a one-dimensional invariant manifold of random system $\phi$. Nevertheless, the construction of global random attractor and invariant manifold becomes more delicate in the presence of unidirectional coupling, where the lack of mutual feedback significantly complicates the required a priori estimates. Besides, we further need to prove that $\mathcal{A}$ is a closed curve on each fiber, which turns out to be crucial in proving Theorem \ref{intro-mainthm}(ii). 
    
    More importantly, we emphasize here, to show the total order structure of the unwrapping of $\mathcal{A}$ from $M$ to $\mathbb{R}^{N}$ with respect to ``$\le$" (see Corollary \ref{dp-coro-do}), one needs to construct in the proof of Theorem \ref{intro-mainthm}(ii) an appropriate random cone field $\mathcal{C}_{M}$ on $M$, such that the following two crucial requirements are satisfied:
    \begin{itemize}
        \item [{\rm(a)}]$\Phi$ is differentially positive with respect to $\mathcal{C}_{M}$;
        \item [{\rm(b)}]For any closed random conal curve  on $M$ (with respect to $\mathcal{C}_{M}$), the $\Phi$-forward time evolution admits a subsequence of times along which it converges, in the pull-back sense, to a closed random conal curve.
    \end{itemize}
    Upon finding such a random cone field $\mathcal{C}_{M}$ satisfying (a)-(b), the total order structure (for the unwrapping of $\mathcal{A}$ from $M$ to $\mathbb{R}^{N}$) will be directly deduced by the structures of $\mathcal{A}$ stated in Theorem \ref{intro-mainthm}(i)-(ii) (see Corollary \ref{dp-coro-do}). Unfortunately, the standard cone field $\mathsf{C}_{M}$ induced by $\mathbb{R}^{N}_{+}$ (see \eqref{dp-eqn-quadrantcone}) is not qualified for satisfying the requirement (b), because the random conal curve can be very steep. To overcome such difficulties, we construct a refined proper cone field $\mathcal{C}_{M}\subset \mathsf{C}_{M}$ on $M$ (see \eqref{eq-cone-field}) which not only ensures the differential positivity of the random system $\Phi$ on $M$ (see Proposition \ref{do-prop-cone}), but also simultaneously provides an effective control on the steepness of the evolution of closed random conal curves. This ensures that the global random attractor $\mathcal{A}$ can be approached by iterating such a closed random conal curve with respect to $\mathcal{C}_{M}$ (see Section \ref{section-pfmainth-b}).

    The paper is organized as follows. In Section \ref{section-prly}, we review some notations and basic properties of random dynamical systems. Furthermore, we specify the random system $\Phi$ generated from system \eqref{intro-eqn-psi}. Section \ref{section-dp} introduces the random cone field, random conal curve and differential positivity. Based on this, our main results (Theorems \ref{mainth-a}-\ref{mainth-b} and Corollary \ref{dp-coro-do}) are presented in this section. In Section \ref{section-pfmainth-b}, we focus on the proof of Theorem \ref{mainth-b}. Section \ref{section-pfmainth-a} is devoted to the proof of Theorem \ref{mainth-a} and obtaining the existence of the one-dimensional global random attractor $\mathcal{A}$ via the approaches for constructing the random invariant manifold and its foliation. Finally, in Section \ref{section-simulation}, numerical simulations are presented to illustrate the dynamical order in stochastic system \eqref{intro-eqn-psi}.

    \section{Notations and Preliminaries}\label{section-prly}
    In this section, we present some preliminaries for random dynamical systems and specify how the random system $\Phi$ is generated from the stochastic system \eqref{intro-eqn-psi}. 
    
    We first recall some notations of random dynamical systems (see \cite{Arnold98,Chueshov02,Crauel02} for more details). Let $(M,d_{M})$ be a Polish space equipped with its Borel $\sigma$-algebra $\mathcal{B}(M)$. For $B_{1},B_{2}\subset M$, the \textit{Hausdorff semi-distance} from $B_{1}$ and $B_{2}$ is defined  as  
    \begin{align*}
        \operatorname{dist}(B_1,B_2)=\sup_{x\in B_1}\inf_{y\in B_2}d_{M}(x,y).
    \end{align*} 
    If $B_{1}=\{x\}$, then we denote $d_{M}(x,B_{2})\triangleq\operatorname{dist}(B_{1},B_{2})$. Given a probability space $(\Omega,\mathcal{F},\mathbb{P})$, a multifunction $B\colon \Omega\to2^{M}$ is called a \textit{random set} if $\omega\mapsto d_{M}(x,B(\omega))$ is $(\mathcal{F},\mathcal{B}(\mathbb{R}_{+}))$-measurable, where $\mathcal{B}(\mathbb{R}_{+})$ denotes the Borel $\sigma$-algebra on $\mathbb{R}_{+}$, for all $x\in M$ and $B(\omega)$ is not empty for all $\omega\in\Omega$. If, in addition, each $B(\omega)$ is closed (resp. compact), then $B$ is called a \textit{random closed (resp. compact) set}. A random set $B$ is called \textit{one-dimensional} if $B(\omega)$ is a one-dimensional topological manifold for each $\omega\in\Omega$.
	
	Let  $\theta\colon\mathbb{R}\times\Omega\to\Omega$ be a $(\mathcal{B}(\mathbb{R})\otimes\mathcal{F},\mathcal{F})$-measurable mapping, where $\mathcal{B}(\mathbb{R})$ denotes the Borel $\sigma$-algebra on $\mathbb{R}$. If the family of mappings $(\theta_t)_{t\in\mathbb{R}}$ from $\Omega$ into itself satisfies (1) $\theta_{0}=\operatorname{id}_{\Omega};$ (2) $\theta_{t}\circ\theta_{s}=\theta_{t+s}$ and (3) $\theta_t\mathbb{P}=\mathbb{P}$ for any $t,s\in\mathbb{R}$, then the quadruples $(\Omega,\mathcal{F},\mathbb{P},(\theta_{t})_{t\in\mathbb{R}})$ is called a \textit{metric dynamical system}.

    \begin{definition}
		\rm A mapping $\varphi\colon\mathbb{R}\times M\times\Omega\to M$ defines a \textit{$C^{1}$-random dynamical system} on $M$ over a metric dynamical system $(\Omega,\mathcal{F},\mathbb{P},(\theta_{t})_{t\in\mathbb{R}})$ if
		\begin{itemize}
                \item[{\rm(i)}] $\varphi$ is $(\mathcal{B}(\mathbb{R})\otimes\mathcal{B}(M)\otimes\mathcal{F},\mathcal{B}(M))$-measurable;
                \item[{\rm(ii)}]  $\varphi(t,\cdot,\omega)\colon M\to M$ forms a cocycle over $(\Omega,\mathcal{F},\mathbb{P},(\theta_{t})_{t\in\mathbb{R}})$, that is,
			\begin{align*}
                    &\varphi(0,x,\omega)=x,\quad \omega\in\Omega,\\
				    &\varphi(t+s,x,\omega)=\varphi(t,\varphi(s,x,\omega),\theta_{s}\omega),\quad t,s\in\mathbb{R},\,\omega\in\Omega;
			\end{align*}
            \item[{\rm(iii)}]for each $\omega\in\Omega$, $\varphi(\cdot,\cdot,\omega)\colon\mathbb{R}\times M\to M$ is continuous;
            \item [{\rm (iv)}] for each $(t,\omega)\in\mathbb{R}\times\Omega$, the mapping 
		\begin{align*}
				\varphi(t,\cdot,\omega)\colon M\to M,\quad x\mapsto\varphi(t,x,\omega)
		\end{align*}
            is differentiable with respect to $x$, and the derivative is continuous with respect to $(t,x)$. 
	\end{itemize}   
    \end{definition}

    A random set $B$ is said to be \textit{invariant (resp. positively invariant)} provided that $\varphi(t,B(\omega),\omega)=B(\theta_{t}\omega)$ (resp. $\varphi(t,B(\omega),\omega)\subset B(\theta_{t}\omega)$) for each $t\in\mathbb{R},\omega\in\Omega$. A random set $B$ is called \textit{tempered} if there exist a random variable $r\colon\Omega\to\mathbb{R}_{+}$ and $q\in M$ such that $B(\omega)\subset\{x\in M\mid d_{M}(x,q)\leq r(\omega)\}$ for any $\omega\in\Omega$, where $r$ satisfies $\sup_{t\in\mathbb{R}}e^{-\lambda|t|}r(\theta_{t}\omega)<\infty$ for any $\lambda>0,\omega\in\Omega$. Such an $r$ is called a \textit{tempered random variable}. Let $\mathcal{D}$ be a family of tempered random sets. $B\in\mathcal{D}$ is called \textit{$\mathcal{D}$-absorbing} if, for any $D\in\mathcal{D}$ and $\omega\in\Omega$, there exists $t_{D}(\omega)>0$ such that $\varphi(t,D(\theta_{-t}\omega),\theta_{-t}\omega)\subset B(\omega),t\geq t_{D}(\omega)$. A random set $A$ is said to \textit{attract} another random set $B$ if $\operatorname{dist}\big(\varphi(t,B(\theta_{-t}\omega),\theta_{-t}\omega),A(\omega)\big)\to0$ as $t\to\infty$ for any $\omega\in\Omega$.
	\begin{definition}\label{defi-attractor-compact}\rm
        A random compact set $A\in\mathcal{D}$ is called a \textit{global random attractor} of the random dynamical system $\varphi$ on $M$ if $A$ is invariant and attracts any $D\in\mathcal{D}$.
    \end{definition}
	
    We present a lemma of the existence of global random attractor, whose   measurability can be obtained by the projection theorem in \cite[Theorem III.23]{CastaingValadier77}.
    \begin{lemma}\label{prly-thm-ator}\textup{(\cite[Theorem 3.5]{FlandoliSchmalfuss96})}
        If there exists a $\mathcal{D}$-absorbing random compact set $K$, then for $\omega\in\Omega$,
	\begin{align*}
            \omega\mapsto A(\omega)=\cap_{t>0}\overline{\cup_{\tau\geq t}\varphi(\tau,K(\theta_{-\tau}\omega),\theta_{-\tau}\omega)}
	\end{align*}
        is the unique global random attractor of $\varphi$.
    \end{lemma}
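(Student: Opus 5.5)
The plan is the standard Flandoli--Schmalfu{\ss} argument, carried out in three stages: (1) $A$ is a random compact set, (2) $A$ is invariant, (3) $A$ attracts every $D\in\mathcal{D}$. Uniqueness and membership in $\mathcal{D}$ come at the end.

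\emph{Step 1: the omega-limit set is nonempty and compact.} Fix $\omega$ and set
\[
A(\omega)=\Omega_K(\omega)=\bigcap_{t>0}\overline{\bigcup_{\tau\ge t}\varphi(\tau,K(\theta_{-\tau}\omega),\theta_{-\tau}\omega)}.
\]
We may assume the tempered compact set $K$ belongs to $\mathcal{D}$; this is implicit in the statement. Since $K$ is $\mathcal{D}$-absorbing, applying this with $D=K$ gives some $t_K(\omega)$ such that $\varphi(\tau,K(\theta_{-\tau}\omega),\theta_{-\tau}\omega)\subset K(\omega)$ for all $\tau\ge t_K(\omega)$. Hence for $t\ge t_K$ the sets in the intersection form a decreasing family of nonempty closed subsets of the compact set $K(\omega)$. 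Their intersection $A(\omega)$ is therefore nonempty and compact, and $A(\omega)\subset K(\omega)$. In particular $A$ is tempered, because $K$ is.

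An equivalent description will be used repeatedly: $y\in A(\omega)$ if and only if there exist $\tau_n\to\infty$ and $x_n\in K(\theta_{-\tau_n}\omega)$ with $\varphi(\tau_n,x_n,\theta_{-\tau_n}\omega)\to y$.

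\emph{Step 2: invariance.} We use the cocycle property and the continuity of $\varphi(t,\cdot,\omega)$.

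For the inclusion $\varphi(t,A(\omega),\omega)\subset A(\theta_t\omega)$: take $y$ as in the sequential description. Then
\[
\varphi(t,\varphi(\tau_n,x_n,\theta_{-\tau_n}\omega),\omega)=\varphi(t+\tau_n,x_n,\theta_{-(t+\tau_n)}\theta_t\omega)\to\varphi(t,y,\omega),
\]
which exhibits $\varphi(t,y,\omega)$ as a point of $A(\theta_t\omega)$.

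For the reverse inclusion: given $z\in A(\theta_t\omega)$, write $z=\lim\varphi(\tau_n,x_n,\theta_{-\tau_n}\theta_t\omega)$. Split each term as $\varphi(t,w_n,\omega)$ with
\[
w_n=\varphi(\tau_n-t,x_n,\theta_{-(\tau_n-t)}\omega).
\]
For large $n$, absorption places $w_n$ in the compact set $K(\omega)$. Extract a convergent subsequence $w_n\to w$; then $w\in A(\omega)$, and continuity gives $z=\varphi(t,w,\omega)$.

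\emph{Step 3: attraction.} Argue by contradiction. Suppose that for some $D\in\mathcal{D}$ there are $\varepsilon>0$, $t_n\to\infty$ and $x_n\in D(\theta_{-t_n}\omega)$ such that
\[
d_M\big(\varphi(t_n,x_n,\theta_{-t_n}\omega),A(\omega)\big)\ge\varepsilon.
\]
Choose $s$ large, in particular $s\ge t_K(\omega)$. By the cocycle property,
\[
\varphi(t_n,x_n,\theta_{-t_n}\omega)=\varphi(s,u_n,\theta_{-s}\omega),\qquad u_n=\varphi(t_n-s,x_n,\theta_{-t_n}\omega).
\]
Absorption of $D$ at the fiber $\theta_{-s}\omega$ puts $u_n$ in $K(\theta_{-s}\omega)$ once $t_n-s\ge t_D(\theta_{-s}\omega)$. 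Therefore $\varphi(t_n,x_n,\theta_{-t_n}\omega)$ lies in $\varphi(s,K(\theta_{-s}\omega),\theta_{-s}\omega)\subset K(\omega)$.

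The delicate point is that a single $s$ is not enough; we need the times to diverge. So pick $s_n\to\infty$ slowly enough that $t_n-s_n\ge t_D(\theta_{-s_n}\omega)$; this is possible by passing to a subsequence of $t_n$. The points then have the form $\varphi(s_n,u_n,\theta_{-s_n}\omega)$ with $u_n\in K(\theta_{-s_n}\omega)$, and they lie in the compact set $K(\omega)$. A convergent subsequence therefore has its limit in $A(\omega)$, by the sequential description, which contradicts the distance bound $\varepsilon$. This diagonal choice of $s_n$ is the main obstacle I expect.

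\emph{Uniqueness and measurability.} For uniqueness, let $A'\in\mathcal{D}$ be another global random attractor. By invariance, $A'(\omega)=\varphi(t,A'(\theta_{-t}\omega),\theta_{-t}\omega)$, and the right-hand side is attracted to $A(\omega)$ as $t\to\infty$. So $\operatorname{dist}(A'(\omega),A(\omega))=0$, and since $A$ is closed, $A'\subset A$. Exchanging the roles of $A$ and $A'$ gives equality. Finally, measurability of $\omega\mapsto A(\omega)$ follows from the projection theorem \cite[Theorem III.23]{CastaingValadier77}: restrict to countably many rational times and write the closures as measurable images.
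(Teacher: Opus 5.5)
Your proposal is correct and is essentially the standard Flandoli--Schmalfuss pullback omega-limit-set argument that the paper invokes by citation, with measurability handled through the Castaing--Valadier projection theorem exactly as the paper remarks. One assumption is left tacit: $A\subset K$ gives $A\in\mathcal{D}$ only when $\mathcal{D}$ is closed under taking random closed subsets (true for the family of all tempered sets used in the paper), and the ``exchange roles'' step of your uniqueness argument relies on this; that step can avoid it, since $A\subset A'$ follows directly from $A'$ attracting $K\in\mathcal{D}$ and $A'(\omega)$ being closed.
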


    As a primary example of the abstract framework above, one may consider random ordinary differential equations of the form \begin{equation}\label{prly-MDS-RDE}
	\begin{cases}
		  \dot{x}=f(x,\theta_{t}\omega),\\
	   x(0,\omega)=x_{0},\quad x_{0}\in\mathbb{R}^N,
	\end{cases}
    \end{equation}
    where $f=(f_{1},\ldots,f_{N})\colon\mathbb{R}^{N}\times\Omega\to\mathbb{R}^{N}$ is $(\mathcal{B}(\mathbb{R}^{N})\otimes\mathcal{F},\mathcal{B}(\mathbb{R}^{N}))$-measurable and $\mathcal{B}(\mathbb{R}^{N})$ denotes the Borel $\sigma$-algebra on $\mathbb{R}^{N}$. Denote by $\varphi(t,x_{0},\omega)$ the solution of random system \eqref{prly-MDS-RDE} with $x(0,\omega)=x_{0}$.

    We give the following hypotheses to guarantee that random system \eqref{prly-MDS-RDE} generates a random dynamical system on $\mathbb{R}^{N}$:
    \begin{itemize}
        \item[{\rm\textbf{(H1)}}] There exist random variables $R_{1}$ and $R_{2}$ such that $t\mapsto R_{j}(\theta_{t}\omega)$, $j=1,2$, are locally integrable and 
	\begin{equation*}
            x\cdot f(x,\omega)^{\top}\leq R_{1}(\omega)\|x\|^{2}+R_{2}(\omega),~~ \text{for}~x\in\mathbb{R}^{N}~ \text{and}~\omega\in\Omega.
	\end{equation*}
        \item[{\rm\textbf{(H2)}}] For any $\omega\in\Omega,t\in\mathbb{R},x\in\mathbb{R}^{N}$, $(t,x)\mapsto f(x,\theta_{t}\omega)$ and $(t,x)\mapsto D_{x}f(x,\theta_{t}\omega)$ are continuous, where $D_{x}f(x,\theta_{t}\omega)$ is the Jacobian of $f(\cdot,\theta_{t}\omega)\colon\mathbb{R}^{N}\to\mathbb{R}^{N}$.
        \item[{\rm\textbf{(H3)}}] For each $\omega\in\Omega$ and $x\in\mathbb{R}^{N}$,
	\begin{align*}
            \frac{\partial f_{i}}{\partial{x_{j}}}(x,\omega)\geq0,~~\text{for any}~ i\neq j~\text{and}~1\leq i,j\leq N.
	\end{align*}
    \end{itemize}
	
    \begin{lemma}\label{prly-lem-monotone}
        Let {\rm(H1)-(H2)}hold. Then system \eqref{prly-MDS-RDE} generates a $C^{1}$-random dynamical system $\varphi$ on $\mathbb{R}^{N}$. Moreover, if {\rm(H3)} holds, then for the linearized system of \eqref{prly-MDS-RDE}: 
		\begin{equation*}
			\begin{cases}
    				\dot{\delta x}=D_{x}f(\varphi(t,x_{0},\omega),\theta_{t}\omega)\delta x,\\
				\delta x(0,\omega)=\delta x_{0},\quad \delta x_{0}\in\mathbb{R}^{N},
			\end{cases}
		\end{equation*}
        where $\delta x$ is the variation along trajectories, one has that $\mathbb{R}^{N}_{+}$ and $\operatorname{Int}\mathbb{R}^{N}_{+}$ are positively invariant.
	\end{lemma}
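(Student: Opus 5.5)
The plan has two parts: first generate the random dynamical system pathwise, then prove cone invariance by a Kamke-type argument.

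\textbf{Generation of $\varphi$.} For each fixed $\omega$, (H2) makes $x\mapsto f(x,\theta_t\omega)$ continuously differentiable in $x$, jointly continuous in $t$. Hence \eqref{prly-MDS-RDE} has a unique local solution, and the solution is $C^{1}$ in $x_0$. Its derivative $D_{x_0}\varphi(t,x_0,\omega)$ solves the linearized equation and is continuous in $(t,x_0)$.

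To rule out blow-up, we use (H1). Differentiating $\|x(t)\|^{2}$ gives
\begin{equation*}
\tfrac{d}{dt}\|x\|^{2}\le 2R_{1}(\theta_t\omega)\|x\|^{2}+2R_{2}(\theta_t\omega).
\end{equation*}
Since $t\mapsto R_j(\theta_t\omega)$ is locally integrable, Gronwall's inequality keeps $\|x(t)\|$ finite on every bounded time interval. Running the same estimate in backward time, where the right-hand side changes sign, needs only local integrability of $|R_1|$ and $|R_2|$. So solutions exist for all $t\in\mathbb{R}$.

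The cocycle property follows from uniqueness together with the identity $f(\cdot,\theta_{t+s}\omega)=f(\cdot,\theta_t(\theta_s\omega))$. Measurability in $(t,x,\omega)$ comes from two facts: the solution is the limit of Picard iterates (or Euler approximations), each of which is measurable, and a Carathéodory function is jointly measurable. This yields a $C^{1}$-random dynamical system.

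\textbf{Invariance of the cone.} Write $A(t)=D_xf(\varphi(t,x_0,\omega),\theta_t\omega)$. It is a continuous matrix function whose off-diagonal entries are nonnegative by (H3). Choose a continuous scalar $c(t)\ge \max_i|a_{ii}(t)|$ on the interval $[0,T]$ under consideration, and set $B(t)=A(t)+c(t)I$. Then $B(t)$ has all entries nonnegative. Let $y(t)=e^{\int_0^t c}\,\delta x(t)$, so that $\dot y=B(t)y$.

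The Picard iteration
\begin{equation*}
y_{n+1}(t)=\delta x_0+\int_0^t B(s)y_n(s)\,ds
\end{equation*}
preserves $\mathbb{R}^N_+$ because $B\ge0$ entrywise, and it converges uniformly on $[0,T]$. Hence $y(t)\in\mathbb{R}^N_+$ for all $t\ge0$. The same holds for $\delta x(t)$, since it differs from $y(t)$ by a positive scalar factor.

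For the interior we use the same iteration. If $\delta x_0\in\operatorname{Int}\mathbb{R}^N_+$, then
\begin{equation*}
y(t)=\delta x_0+\int_0^t B(s)y(s)\,ds\ge \delta x_0,
\end{equation*}
because the integrand is nonnegative. So every component stays strictly positive, and $\operatorname{Int}\mathbb{R}^N_+$ is positively invariant as well.

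\textbf{Main obstacle.} The only delicate point is global existence, in particular in backward time, under the one-sided bound (H1). If (H1) turns out to give control only forward in time, I would fall back on the actual system of the paper. There $f$ is globally Lipschitz, because $\sin$ is bounded with bounded derivative and the noise is additive, so two-sided global existence is immediate.
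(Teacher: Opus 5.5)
Your cone-invariance argument is correct and self-contained. You shift $A(t)$ by $c(t)I$ to an entrywise nonnegative $B(t)$, propagate nonnegativity through the Picard iterates, and read $y(t)\ge\delta x_0$ off the integral equation. This is the standard argument behind the result the paper cites (Chueshov's Lemma 5.2.1), and it only uses the trajectory for $t\ge0$. Your forward global existence from (H1) via Gronwall is also fine.

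The gap is the backward-time step, and it cannot be repaired within (H1)--(H2). (H1) bounds $\frac{\mathrm{d}}{\mathrm{d}t}\|x\|^2$ only from above. After reversing time ($s=-t$) it yields $\frac{\mathrm{d}}{\mathrm{d}s}\|x\|^2\ge -2R_1\|x\|^2-2R_2$. That is a lower bound, so it rules out nothing, and integrability of $|R_1|,|R_2|$ does not change this. Concretely, take $N=1$ and $f(x,\omega)=-x^3$. Then (H1)--(H2) hold with $R_1=R_2=0$, but $x(t)=x_0(1+2x_0^2t)^{-1/2}$ blows up as $t\downarrow-1/(2x_0^2)$.

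So (H1)--(H2) only give a random dynamical system in forward time, with the cocycle property for $t,s\ge0$. Two-sided generation needs an extra hypothesis, such as $\|f(x,\omega)\|\le R_1(\omega)\|x\|+R_2(\omega)$ or global Lipschitz continuity in $x$. Your fallback is therefore required, not optional, and it is the right one. The right-hand side of \eqref{prly-eqn-phi} is globally Lipschitz in $\phi$ with constant at most $k\|L\|+1$, and $t\mapsto f(0,\theta_t\omega)$ is continuous, so solutions exist on all of $\mathbb{R}$. This matters later, since $W^{cu}$ is defined through backward solutions and invariance is used for $t\in\mathbb{R}$.

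The paper's own proof asserts continuation to $\mathbb{R}$ from (H1) alone, via citation, and the same example refutes that assertion. The lemma should either be stated for $t\ge0$, or be proved for two-sided time under the stronger growth condition that the concrete system actually satisfies.
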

	\begin{proof}
            By similar arguments in \cite[Corollary 2.1.1]{Chueshov02}, (H1) implies that the solution of random system \eqref{prly-MDS-RDE} can be continued to $\mathbb{R}$. Thus, together with \cite[Theorem 2.2.1]{Arnold98}, (H2) implies that \eqref{prly-MDS-RDE} generates a $C^{1}$-random dynamical system $\varphi$. If, in addition, (H3) holds,  similar arguments in \cite[Lemma 5.2.1]{Chueshov02} imply that $\mathbb{R}^{N}_{+}$ and $\operatorname{Int}\mathbb{R}^{N}_{+}$ are positively invariant.  
	\end{proof}
	
        A nonempty closed subset $C\subset \mathbb{R}^{N}$ is called a \textit{closed convex cone} if $C$ satisfies $C+C\subset C$, $\lambda C\subset C$ for $\lambda\geq0$ and $C\cap(-C)=\{0\}$. The cone $C$ naturally induces a (partial) order on $\mathbb{R}^{N}$ by $x_{1}\leq x_{2}$ whenever $x_{2}-x_{1}\in C$. Clearly, $\mathbb{R}_{+}^{N}$ is the  \textit{standard cone} of $\mathbb{R}^{N}$. So, a nonempty set $B\subset\mathbb{R}^{N}$ is said to be \textit{totally ordered} with respect to the standard order ``$\leq$" in $\mathbb{R}^{N}$ if for any two distinct points $x_{1},x_{2}\in B$, $x_{1}-x_{2}\in\mathbb{R}_{+}^{N}\cup (-\mathbb{R}_{+}^{N})$.
	
    Now, we focus on  the stochastic system \eqref{intro-eqn-psi}, which models the $N$ identical  oscillators with unidirectional coupling driven by additive white noise. We show how system \eqref{intro-eqn-psi} can be  converted into an equivalent system of random equations.

    For each one-dimensional Brownian motion $W_{i}$, let 
    \begin{align*}
        \Omega_{0}=C_{0}(\mathbb{R},\mathbb{R})=\{\omega_{0}\mid \omega_{0}\colon\mathbb{R}\to\mathbb{R}~ \text{is~continuous~and~}\omega_{0}(0)=0\}
    \end{align*}
    be equipped with the compact-open topology. Let also $\mathcal{F}_{0}=\mathcal{B}(\Omega_{0})$ be the Borel $\sigma$-algebra of $\Omega_0$, and $\mathbb{P}_{0}$ be the Wiener measure generated by $W_i$. Take
    \begin{align*}
        \bar{\Omega}=\overbrace{\Omega_{0}\times\Omega_{0}\times\ldots\times\Omega_{0}}^{N}~~\text{and}~~ \bar{\mathcal{F}}=\overbrace{\mathcal{F}_{0}\otimes\mathcal{F}_{0}\otimes\ldots\otimes\mathcal{F}_{0}}^{N},
    \end{align*}
    and let $\mathbb{P}$ be the Wiener measure generated by $(W_{i})_{i=1}^N$. Then $(\bar{\Omega},\bar{\mathcal{F}},\mathbb{P})$ is the canonical realization of $(W_{i})_{i=1}^N$. Denote the Wiener shift 
    \begin{align*}
		\theta_{t}\colon\bar{\Omega}\to\bar{\Omega},\quad \theta_{t}\omega(\cdot)=\omega(\cdot+t)-\omega(t).
    \end{align*}
    Then $(\bar{\Omega},\bar{\mathcal{F}},\mathbb{P},(\theta_{t})_{t\in\mathbb{R}})$ is a metric dynamical system.
    
    \begin{lemma}\textup{(\cite[Lemma 2.1]{DuanLuSch03})}\label{pre-lem-hi} 
        Consider Ornstein-Uhlenbeck process 
	\begin{align}\label{eq-OU}
            \mathrm{d}z_{i}+z_{i}\,\mathrm{d}t=\epsilon_i\,\mathrm{d}W_{i}(t),\quad i=1,\ldots,N.
	\end{align}
        There is a $\theta_{t}$-invariant subset $\Omega \subset \bar{\Omega}$ (i.e. $\theta_{t}\Omega = \Omega$ for $t\in\mathbb{R}$) with $\mathbb{P}(\Omega)=1$ such that the following hold for $1\leq i \le N$:
	\begin{enumerate}
            \item [{\rm(i)}] For $\omega \in \Omega$, the random variable 
		\begin{align*}
                \omega\mapsto z_i(\omega)=-\epsilon_{i}\int_{-\infty}^{0}e^{\tau}\omega(\tau)\,\mathrm{d}\tau
		\end{align*}
            exists and generates the unique stationary solution to system \eqref{eq-OU}, which satisfies that $t\mapsto z_{i}(\theta_{t}\omega)$ is continuous in $t\in\mathbb{R}$.
            \item [{\rm(ii)}]$\dfrac{\vert z_{i}({\theta_{t}\omega})\vert}{t}\to0$ as $t\to\pm\infty$ for $\omega\in\Omega$. In particular, $|z_{i}|$ is a 
            tempered random variable.
	\end{enumerate}
    \end{lemma}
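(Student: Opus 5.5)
The plan is to work directly with the explicit formula in (i). Fix the canonical realization $(\bar\Omega,\bar{\mathcal F},\mathbb P)$ and the Wiener shift $\theta_t$. The components $W_i$ are independent, so each $i$ can be treated separately; the final set is then $\Omega=\bigcap_{i=1}^N\Omega_i$.

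\textbf{Step 1: growth of Brownian paths.} By the strong law of large numbers for Brownian motion, $\omega_i(t)/t\to0$ as $t\to\pm\infty$ holds $\mathbb P$-a.s. Let $\Omega_i'$ be the set of paths with this property. Then $\Omega_i'$ is $\theta_t$-invariant, because
\begin{equation*}
\theta_s\omega(t)/t=\bigl(\omega(t+s)-\omega(s)\bigr)/t\to0 .
\end{equation*}
On $\Omega_i'$ the integral $\int_{-\infty}^0 e^\tau\omega_i(\tau)\,d\tau$ converges absolutely, since $|\omega_i(\tau)|\le C(1+|\tau|)$. So $z_i(\omega)$ is well defined and measurable, the latter as a limit of Riemann-type integrals of the continuous path.

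\textbf{Step 2: stationary solution.} Change variables to get
\begin{equation*}
z_i(\theta_t\omega)=-\epsilon_i\int_{-\infty}^0 e^\tau\bigl(\omega_i(\tau+t)-\omega_i(t)\bigr)\,d\tau=-\epsilon_i\int_{-\infty}^t e^{s-t}\omega_i(s)\,ds+\epsilon_i\omega_i(t).
\end{equation*}
This expression is continuous in $t$. Differentiating it pathwise shows that
\begin{equation*}
z_i(\theta_t\omega)-z_i(\omega)+\int_0^t z_i(\theta_s\omega)\,ds=\epsilon_i\omega_i(t),
\end{equation*}
which is exactly the integral form of \eqref{eq-OU}. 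The process is stationary because $\theta_t$ preserves $\mathbb P$. For uniqueness, take any other stationary solution. Its difference from $z_i$ satisfies $\dot u=-u$, so $u(t)=e^{-t}u(0)$. Stationarity forces $u\equiv0$: otherwise $|u(-t)|=e^{t}|u(0)|\to\infty$, which contradicts $u(-t)$ having the same law as $u(0)$.

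\textbf{Step 3: sublinear growth (ii).} This is the main step. The crude bound from the formula in Step 2 only gives $|z_i(\theta_t\omega)|\le C(1+|t|)$, which is not enough. Instead, use that $z_i(\theta_t\omega)$ is a stationary Gaussian process with continuous paths, and aim for $\sup_{t\in[n,n+1]}|z_i(\theta_t\omega)|/n\to0$ a.s. Compare with the stationary ergodic sequence
\begin{equation*}
Y_n=\sup_{t\in[n,n+1]}|z_i(\theta_t\omega)|=Y_0\circ\theta_n .
\end{equation*}
Here $\mathbb E Y_0<\infty$: on $[0,1]$ we have $|z_i(\theta_t\omega)|\le|z_i(\omega)|+\int_0^1|z_i(\theta_s\omega)|\,ds+\epsilon_i\sup_{[0,1]}|\omega_i|$, and every term is integrable (Gaussian terms plus the Brownian supremum). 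For any integrable stationary sequence, $\sum_n\mathbb P(Y_0>\delta n)<\infty$ for each $\delta>0$, so Borel--Cantelli gives $Y_n/n\to0$ a.s. The argument for $n\to-\infty$ is identical.

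Let $\Omega_i''$ be the full-measure set where this holds. It is $\theta_t$-invariant for integer shifts, and for real shifts too, since shifting by $s\in[0,1]$ changes $Y_n$ to at most $Y_n+Y_{n+1}$. Then set $\Omega=\bigcap_i(\Omega_i'\cap\Omega_i'')$, which is still $\theta_t$-invariant and of full measure.

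Temperedness follows directly. Sublinear growth gives $|z_i(\theta_t\omega)|\le\delta|t|+C_\delta(\omega)$ for every $\delta>0$, and therefore $\sup_t e^{-\lambda|t|}|z_i(\theta_t\omega)|<\infty$ for every $\lambda>0$.
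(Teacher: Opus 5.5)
Your proof is correct. The paper itself does not prove this lemma; it imports it from Duan--Lu--Schmalfuss. So the comparison is with the standard derivation from sublinear growth of Brownian paths.

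Your route departs from the most direct one only in Step~3, and there your claim that the bound from the explicit formula ``is not enough'' is mistaken. On $\Omega_i'$, for every $\delta>0$ there is $C_\delta(\omega)<\infty$ with $|\omega_i(s)|\le\delta|s|+C_\delta(\omega)$ for all $s\in\mathbb{R}$, by sublinearity plus continuity. Inserting this into your own formula gives
\[
|z_i(\theta_t\omega)|\le\epsilon_i\int_{-\infty}^0 e^{\tau}\bigl(2\delta|t|+\delta|\tau|+2C_\delta(\omega)\bigr)\,d\tau=\epsilon_i\bigl(2\delta|t|+\delta+2C_\delta(\omega)\bigr),
\]
so $\limsup_{|t|\to\infty}|z_i(\theta_t\omega)|/|t|\le2\epsilon_i\delta$ for every $\delta>0$. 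Thus (ii) is a purely deterministic consequence of Step~1 and holds for every $\omega\in\Omega_i'$. You need neither the second exceptional set $\Omega_i''$ nor its invariance check.

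Your stationarity/Borel--Cantelli argument is still valid. It is the criterion $\mathbb{E}\sup_{t\in[0,1]}|f(\theta_t\omega)|<\infty\Rightarrow f(\theta_t\omega)/t\to0$ a.s. What it buys is generality: it applies to any stationary process with integrable local suprema, without a pathwise representation in terms of $\omega$. The price is discarding an extra null set.

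There is also a small point in the uniqueness part of Step~2. Concluding that $u(-t)$ has the same law as $u(0)$ requires $u$ itself to be stationary. This holds automatically in the random-dynamical-systems sense intended here, where a stationary solution is an orbit $\tilde z(\theta_t\omega)$ of a random variable: then $u=(\tilde z-z)\circ\theta_t$ and $\mathbb{P}$ is $\theta_t$-invariant. For a solution that is merely stationary in law, you should argue differently. Note that $\tilde z(-t)=z(-t)+e^{t}u(0)$, where $z(-t)$ has a fixed law while $e^{t}|u(0)|\to\infty$ on $\{u(0)\neq0\}$. Hence $\tilde z(-t)$ cannot have a $t$-independent law unless $u(0)=0$ almost surely.
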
 
    Let $\Omega$ be as in Lemma \ref{pre-lem-hi} and $\mathcal{F}=\bar{\mathcal{F}}\cap\Omega=\{F\cap\Omega\mid F\in\bar{\mathcal{F}}\}$. Then $(\Omega,\mathcal{F},\mathbb{P},(\theta_t)_{t\in\mathbb{R}})$ is also a metric dynamical system. Denote
    \begin{align}\label{prly-eqn-z}
        z(\theta_{t}\omega)=(z_{1}(\theta_{t}\omega),\ldots,z_{N}(\theta_{t}\omega))^{\top}~~\text{for}~~  t\in\mathbb{R},\,\omega\in\Omega.
    \end{align}
   For the solution $\psi(t,\omega)$ to stochastic systems \eqref{intro-eqn-psi} and $z(\theta_{t}\omega)$ defined in \eqref{prly-eqn-z}, let 
    \begin{align*}
        \phi(t,\omega)=\psi(t,\omega)-z(\theta_{t}\omega)~~\text{for}~~ t\in\mathbb{R},\,\omega\in\Omega.
    \end{align*}
    Then $\phi=(\phi_{1},\cdots,\phi_{N})^{\top}$ satisfies
    \begin{equation}\label{prly-eqn-phi}
        \frac{\mathrm{d}\phi}{\mathrm{d}t}=kL\phi+ \begin{pmatrix} 
            \alpha_1 - \sin(\phi_1 + z_1(\theta_t \omega)) + H_1(\theta_t \omega) \\ \cdots \\ \alpha_N - \sin(\phi_N + z_N(\theta_t \omega)) + H_N(\theta_t \omega)
        \end{pmatrix},
    \end{equation}
    where
    \begin{equation}\label{prly-eqn-H}
        \begin{aligned}
        &H_{1}(\theta_{t}\omega)=z_{1}(\theta_t\omega),~~ \text{and}\\
        & H_{i}(\theta_t\omega)=k[z_{i-1}(\theta_t\omega)-z_i(\theta_t\omega)]+z_i(\theta_t\omega),\quad i=2,\ldots,N.
        \end{aligned}
    \end{equation}
    It then follows from Lemma \ref{prly-lem-monotone} that system \eqref{prly-eqn-phi} generates a $C^{1}$-random dynamical system $\phi$ on $\mathbb{R}^{N}$ over the metric dynamical system $(\Omega, \mathcal{F}, \mathbb{P}, (\theta_{t})_{t \in \mathbb{R}})$; and moreover, $\mathbb{R}^{N}_{+}$ and $\operatorname{Int}\mathbb{R}^{N}_{+}$ are positively invariant with respect to the linearized system $\delta\phi$ (see \eqref{hc-prophc-eqnzeta}). Meanwhile, by the periodicity of random  system \eqref{prly-eqn-phi}, one has 
    \begin{align}\label{prly-eqn-phiperiod}
        \phi(t,\phi_0+(2\pi,\ldots,2\pi)^{\top},\omega)=\phi(t,\phi_0,\omega)+(2\pi,\ldots,2\pi)^{\top}
    \end{align}
    for $t\in\mathbb{R},\omega\in\Omega,\phi_0\in\mathbb{R}^{N}$. Since  $\psi$ and $\phi$ differ only by an explicitly solvable linear correction term,  precisely, \begin{align}\label{prly-eqn-psiphiOU}
     \phi(t,\phi_{0},\omega)=\psi(t,\phi_{0}+z(\omega),\omega)-z(\theta_{t}\omega),\quad \phi_0\in\mathbb{R}^N,\,t\in\mathbb{R},\,\omega\in\Omega,
    \end{align}
    and $\phi$ is a $C^1$-random dynamical system  on $\mathbb{R}^N$ over $(\Omega, \mathcal{F}, \mathbb{P}, (\theta_{t})_{t \in \mathbb{R}})$, we henceforth focus on  $(\Omega, \mathcal{F}, \mathbb{P}, (\theta_{t})_{t \in \mathbb{R}})$ rather than $(\bar{\Omega},\bar{\mathcal{F}},\mathbb{P},(\theta_{t})_{t\in\mathbb{R}})$.
 
    For brevity, we hereafter let the vector $\mathbf{e}=(1,\ldots,1)^{\top}$ and rewrite $2\pi\mathbf{e}$ as $(2\pi,\ldots,2\pi)^{\top}$. By wrapping the phase space $\mathbb{R}^{N}$ along the vector $2\pi\mathbf{e}$, one obtains a manifold $M\triangleq\mathbb{R}^{N}/\mathbb{Z}\cdot2\pi\mathbf{e}$, which is the quotient of $\mathbb{R}^{N}$  by the subgroup $\mathbb{Z}\cdot2\pi\mathbf{e}$ and diffeomorphic to $\mathbb{S}^{1}\times\mathbb{R}^{N-1}$. The metric $d_{\mathbb{R}^{N}}$ on $\mathbb{R}^{N}$, defined by the standard Euclidean norm $\|\cdot\|$,   naturally induces a metric $d_{M}$ on $M$. More precisely, for any $x_{i}\in\mathbb{R}^{N}$, $i=1,2$, let 
    \begin{align*}
       \hat{x}_{i}\triangleq x_{i}\,\bmod\,2\pi\mathbf{e}\in M,\quad i=1,2.
    \end{align*}
    The distance between $\hat{x}_{1}$ and $\hat{x}_{2}$ on $M$ is
    \begin{align}\label{prly-eqn-dM}
        d_{M}(\hat{x}_{1},\hat{x}_{2})\triangleq\inf_{n\in\mathbb{Z}}d_{\mathbb{R}^{N}}\big(x_{1},x_{2}+n\cdot2\pi\mathbf{e}\big)=\inf_{n\in\mathbb{Z}}
        \|x_{1}-x_{2}-n\cdot2\pi\mathbf{e}\|.
    \end{align}
    Together with \eqref{prly-eqn-phiperiod}, the system $\phi$ can induce a \textit{new random dynamical system} $\Phi$ on $M$ as follows
    \begin{align}\label{prly-eqn-tildephi}
        \Phi(t,\Phi_{0},\omega)\triangleq\phi(t,\phi_{0},\omega)\,\bmod\,2\pi\mathbf{e},
    \end{align}
    where $\Phi_{0}\triangleq\phi_{0}\,\bmod\,2\pi\mathbf{e}\in M$ and $\phi^{0}\in\mathbb{R}^{N}$. We will focus on the random system $\Phi$ in Section \ref{section-dp} to prepare for the dynamical order in stochastic system \eqref{intro-eqn-psi}.

	\section{Differential Positivity of \texorpdfstring{$\Phi$}{tilde{phi}} and Main Results}\label{section-dp}
     
    In this section, we will first show the differential positivity of random system $\Phi$ on $M$, and then we will present the main results in our paper (Theorems \ref{mainth-a}-\ref{mainth-b} and Corollary \ref{dp-coro-do}). Forni and Sepulchre \cite{Forni15,ForniSepulchre14,ForniSepulchre16} first introduced the differential positivity for deterministic systems, by which they investigated the dynamics of nonlinear pendulums, nonlinear consensus protocols and decision-making processes, etc. For more recent works, we refer to \cite{MostajeranSepulchre18a,MostajeranSepulchre18b,NiuWang25,NiuZhangWang26}. Motivated by their works, we here introduce the differential positivity for random dynamical systems. Before giving the rigorous definition, we first introduce a random cone field on $M$ (see \cite{ForniSepulchre16,Lawson89,NiuWang25} for deterministic version). 
    \subsection{Differential Positivity of \texorpdfstring{$\Phi$}{tilde{phi}}}

    Let $M$ be a smooth manifold of dimension $N$, endowed with a Riemannian metric tensor and the Riemannian metric $d_{M}$. We assume that $(M,d_{M})$ is a complete metric space. The tangent bundle is denoted by $TM$ and the tangent space at a point $y\in M$ by $T_{y}M$. 
    \begin{definition}\label{de-cf}
        \rm A \textit{random cone field} on a manifold $M$ is a map $C_{M}\colon(\omega,y)\mapsto C^{\omega}_{M}(y)$ such that $C^{\omega}_{M}(y)$ is a closed convex cone in $T_{y}M$ for each $y\in M,\omega\in\Omega$. In local coordinates, $C_{M}^{\omega}(y)$ can be represented as  
        \begin{align*}
            C_{M}^{\omega}(y)=\{\delta y\in T_{y}M\mid g_{i}(y,\delta y,\omega)\geq0,i\in I\},
        \end{align*}
        where $\delta y$ is a variation (tangent vector) at $y$, $I\subset\mathbb{Z}$ is an index set and $g_{i}\colon TM\times\Omega\to\mathbb{R},i\in I,$ are functions such that 
        \begin{itemize}
            \item [{\rm(i)}] $g_{i}(\cdot,\cdot, \omega)$ are continuous for each $\omega\in\Omega$;
            \item [{\rm(ii)}]$g_{i}(y,\delta y,\cdot)$ are $(\mathcal{F},\mathcal{B}(\mathbb{R}))$-measurable for each $(y,\delta y)\in TM$.
        \end{itemize}
    \end{definition}
    \vspace{0.25\baselineskip} 
    
    With the random cone field $C_{M}$, one can define the differentially positive random system as one whose linearization along trajectories preserves $C_{M}$.
    \begin{definition}\label{dp-def-dp}
        \rm A $C^{1}$-random dynamical system $\varphi$ on $M$ is said to be \textit{differentially positive} with respect to a random cone field $C_{M}$ if
	\begin{align*}
            \mathrm{d}\varphi(t,y,\omega)C^{\omega}_{M}(y)\subset C_{M}^{\theta_{t}\omega}(\varphi(t,y,\omega)),\quad \forall~t\geq0,y\in M,\,\omega\in\Omega,
	\end{align*}
        where $\mathrm{d}\varphi(t,y,\omega)$ is the tangent map from $T_{y}M$ to $T_{\varphi(t,y,\omega)}M$.
    \end{definition}

    For the random system $\Phi$ on the manifold $M$ defined in \eqref{prly-eqn-tildephi}, due to the periodicity and the positive invariance of $\mathbb{R}_{+}^{N}$  for the corresponding linearized system $\delta\phi$ of system \eqref{prly-eqn-phi} (see \eqref{hc-prophc-eqnzeta}), a standard random cone field that makes random system $\Phi$ differentially positive is 
         \begin{align}\label{dp-eqn-quadrantcone}
             \mathsf{C}_{M}\colon(\omega,y)\mapsto\mathsf{C}_{M}^{\omega}(y)\triangleq\{\delta y\in T_{y}M\mid \delta y\in  \mathbb{R}^{N}_{+}\}.
         \end{align}
         However, such a cone field $\mathsf{C}_{M}$ is not qualified for satisfying the requirement (b) mentioned in the Introduction. Consequently, we have to construct a new refined proper cone field $\mathcal{C}_{M}\subset\mathsf{C}_{M}$ on $M$ as follows:
         \begin{align}\label{eq-cone-field}
            (\omega,y)\mapsto \mathcal{C}_{M}^{\omega}(y)\triangleq\{\delta y\in T_{y}M\mid g_{j}(y,\delta y,\omega)\geq0,~ j=1,\ldots,2N-2\},
	\end{align}
	where
        \begin{align*}
            g_{2i-1}(y,\delta y,\omega)=\delta y_{i+1}-\gamma_{i}{\delta y}_{i}~~\text{with}~~ \gamma_{i}^{-1}=1+2ik^{-1}, 
        \end{align*}
        and
	\begin{align*}
		g_{2i}(y,\delta y,\omega)=\beta_{i}{\delta y}_{i}-{\delta y}_{i+1}~~\text{with}~~\beta_{i}^{-1}=1-2ik^{-1},
	\end{align*}
	for $i=1,\ldots,N-1$. Here $k$ is the coupling coefficient, and ${\delta y}_{i}$ is the $i$-th component of $\delta y$. The following proposition indicates that random system $\Phi$ is differentially positive with respect to $\mathcal{C}_{M}$.

	\begin{proposition}\label{do-prop-cone}
		The random system $\Phi$ on $M$ is differentially positive with respect to $\mathcal{C}_{M}$ defined in \eqref{eq-cone-field}.
    \end{proposition}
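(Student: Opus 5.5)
Since $M=\mathbb{R}^{N}/\mathbb{Z}\cdot2\pi\mathbf{e}$ is a quotient of $\mathbb{R}^{N}$ by translations, every $T_{y}M$ is canonically $\mathbb{R}^{N}$. The tangent map $\mathrm{d}\Phi(t,y,\omega)$ is then the fundamental matrix of the linearization of \eqref{prly-eqn-phi} along $\phi(t,\phi_0,\omega)$, for any lift $\phi_0$ of $y$; this is well defined because of \eqref{prly-eqn-phiperiod}. The cone $\mathcal{C}_M^\omega(y)$ depends neither on $y$ nor on $\omega$. So the plan is to fix a trajectory and show that the closed set
\begin{align*}
K=\{\zeta\in\mathbb{R}^N\mid \gamma_i\zeta_i\le \zeta_{i+1}\le\beta_i\zeta_i,\ i=1,\ldots,N-1\}
\end{align*}
is forward invariant for the nonautonomous linear system
\begin{align*}
\dot\zeta_1=-c_1(t)\zeta_1,\qquad \dot\zeta_i=k(\zeta_{i-1}-\zeta_i)-c_i(t)\zeta_i\ (i\ge2),
\end{align*}
where $c_i(t)=\cos(\phi_i(t)+z_i(\theta_t\omega))\in[-1,1]$ are continuous in $t$. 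Throughout I take $k>2(N-1)$ (the standing ``$k$ large'' regime), so that $0<\gamma_i<1<\beta_i$.

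\textbf{Step 1 (reduction to positive ratios).} If $\zeta(0)\in K$ and $\zeta_1(0)=0$, the constraints force $\zeta(0)=0$, and the solution stays at $0\in K$. Otherwise $\zeta_1(0)>0$, and the constraints give $\zeta_i(0)>0$ for all $i$. Then $\zeta_1(t)=\zeta_1(0)\exp(-\int_0^t c_1)>0$, and by Lemma \ref{prly-lem-monotone} (positive invariance of $\mathbb{R}^N_+$), or directly by variation of constants in the lower-triangular structure, $\zeta_i(t)>0$ for all $t\ge0$. So the ratios $r_i=\zeta_{i+1}/\zeta_i$ are defined and $C^1$ for all $t\ge 0$, and membership in $K$ reads $r_i(t)\in[\gamma_i,\beta_i]$.

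\textbf{Step 2 (ratio equations).} A direct computation gives
\begin{align*}
\dot r_1&=k(1-r_1)+(c_1-c_2)r_1,\\
\dot r_i&=k(1-r_i)-k r_i\big(r_{i-1}^{-1}-1\big)+(c_i-c_{i+1})r_i,\quad i\ge2.
\end{align*}
This system is triangular: $\dot r_i$ depends only on $r_i$ and $r_{i-1}$. I would therefore prove by induction on $i$ that $r_i(t)\in[\gamma_i,\beta_i]$ for all $t\ge0$, treating $r_{i-1}(\cdot)$ as a known continuous function already confined to $[\gamma_{i-1},\beta_{i-1}]$.

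\textbf{Step 3 (boundary sign check).} The key identities are $k(1-\beta_i)=-2i\beta_i$ and $k(1-\gamma_i)=2i\gamma_i$, together with $r_{i-1}^{-1}-1\in[\beta_{i-1}^{-1}-1,\ \gamma_{i-1}^{-1}-1]=[-2(i-1)/k,\ 2(i-1)/k]$ and $|c_i-c_{i+1}|\le2$. With these, if $r_i=\beta_i$ the right-hand side of the ratio equation is at most
\begin{align*}
-2i\beta_i+2(i-1)\beta_i+2\beta_i=0,
\end{align*}
and if $r_i=\gamma_i$ it is at least
\begin{align*}
2i\gamma_i-2(i-1)\gamma_i-2\gamma_i=0.
\end{align*}
For $i=1$ the middle term is absent and the same computation works.

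\textbf{Main obstacle.} The boundary inequalities in Step 3 are only weak, so a naive ``the vector field points strictly inward'' argument fails. I would handle this with a scalar comparison argument. For the upper bound, set $u=r_i-\beta_i$. Since $r_i$ ranges over a compact interval on a finite time window, one can write $\dot u\le a(t)u$ with $a$ bounded and continuous: the right-hand side of the $r_i$ equation is affine in $r_i$ and equals a nonpositive quantity at $r_i=\beta_i$. Gronwall's inequality then yields $u(t)\le u(0)e^{\int_0^t a}\le0$. The lower bound is symmetric. Alternatively, one can perturb the bounds to $\beta_i+\varepsilon$ and $\gamma_i-\varepsilon$, which makes the inequalities strict, and then let $\varepsilon\to0$.

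This establishes $\mathrm{d}\Phi(t,y,\omega)\mathcal{C}_M^\omega(y)\subset\mathcal{C}_M^{\theta_t\omega}(\Phi(t,y,\omega))$ for all $t\ge0$, which is the differential positivity asserted in Proposition \ref{do-prop-cone}.
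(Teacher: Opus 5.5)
Your argument is correct. Its core is the same boundary computation the paper performs: up to the positive factor $\zeta_i$, your evaluation of $\dot r_i$ at $r_i=\beta_i$ (resp. $r_i=\gamma_i$) is exactly the paper's sign check \eqref{dp-propdp-eqnbeta} (resp. \eqref{dp-propdp-eqngamma}) on the face $\delta x_{i+1}=\beta_i\delta x_i$ (resp. $\delta x_{i+1}=\gamma_i\delta x_i$). You use the same bound coming from the $(i-1)$-st constraint, and you arrive at the same borderline value $0$.

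Where you differ is in how invariance is deduced from this check. The paper treats all faces of the polyhedral cone $\mathscr{C}$ simultaneously. It then asserts that a field which is nowhere outward on $\partial\mathscr{C}$ leaves $\mathscr{C}$ invariant. Because the boundary inequalities are only weak, this step tacitly relies on a Nagumo/Bony--Brezis type invariance theorem, which is never stated.

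You instead use positivity to pass to the ratios $r_i$. This turns the cone into the box $\prod_i[\gamma_i,\beta_i]$ and the linearized flow into a triangular system in which $\dot r_i$ is affine in $r_i$, with coefficient depending only on $r_{i-1}$ and $t$. Induction along the chain plus a scalar linear Gronwall comparison then closes the argument. The equality case causes no difficulty, and no general invariance theorem is needed. So your version supplies the step the paper leaves implicit.

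The price is that the sequential induction depends on the unidirectional, lower-triangular coupling. With nearest-neighbour coupling, $\dot r_i$ would also involve $r_{i+1}$, and you would be back to needing a simultaneous invariance argument. That simultaneous form is the one the paper's face check takes.

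You also make explicit the assumption $k>2(N-1)$. The paper needs it so that all $\beta_i$ are finite and positive, but never states it in the proposition.
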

    \begin{proof}
        By the definition of random system $\Phi$ on  $M$ (see \eqref{prly-eqn-tildephi}), the differential positivity of random system $\Phi$ is equivalent to the corresponding linearized system along trajectories of random system $\phi$ on $\mathbb{R}^{N}$ preserving the polyhedral cone
        \begin{align}\label{dp-propdp-eqnconeC}
            \mathscr{C}=\{\delta x\in\mathbb{R}^{N}_{+}\mid\gamma_{i}{\delta x}_{i}\leq {\delta x}_{i+1}\leq\beta_{i}{\delta x}_{i},i=1,\ldots,N-1\},
	\end{align}
        where ${\delta x}_{i}$ denotes the $i$-th component of $\delta x$ (see Figure \ref{dp-propdpfig-coneC}). Note that the corresponding linearized system $\delta\phi$ along trajectories of random system $\phi$ on $\mathbb{R}^{N}$ is	\begin{equation}\label{hc-prophc-eqnzeta}
		\begin{cases}
                \dfrac{\mathrm{d}\delta\phi}{\mathrm{d}t}=kL \delta\phi+ \begin{pmatrix} -\cos\left(\phi_{1}+z_{1}\right)&& \\ 
	       	&\ddots&\\ 
                &&-\cos\left(\phi_{N}+z_{N}\right) \end{pmatrix}\delta\phi,\\
				\delta\phi(0,\omega)=\delta\phi_{0}.
		\end{cases}
	\end{equation}
        Denote by $\delta\phi(t,\delta\phi_{0},\omega)$ the solution of random system \eqref{hc-prophc-eqnzeta}. It suffices to verify $\delta\phi(t,\delta\phi_{0},\omega)\in\mathscr{C}$, whenever $\delta\phi_{0}\in\mathscr{C}$ and $t\geq0$. For this purpose, let $\mathscr{C}_{i}$ be projection regions of $\mathscr{C}$ in $(\delta x_{i},\delta x_{i+1})$-plane, that is, 
         \begin{align*}
            \mathscr{C}_i=\{({\delta x}_{i},{\delta x}_{i+1})^{\top}\in\mathbb{R}^{2}_{+}\mid\gamma_i {\delta x}_i \leq {\delta x}_{i+1} \leq \beta_{i}{\delta x}_{i}\},\quad i=1,\ldots,N-1.
        \end{align*} 
        Then, one needs to show that on each $\partial\mathscr{C}_i$ (the boundary of $\mathscr{C}_{i}$), the projected direction of $\delta\phi(t,\delta\phi_{0},\omega)$ is not outward from $\mathscr{C}_{i}$ for $i=1,\ldots,N-1$.
                       \begin{figure}[H]
            \begin{minipage}[t]{0.44\linewidth}
                \centering
                \includegraphics[width=1.0\textwidth]{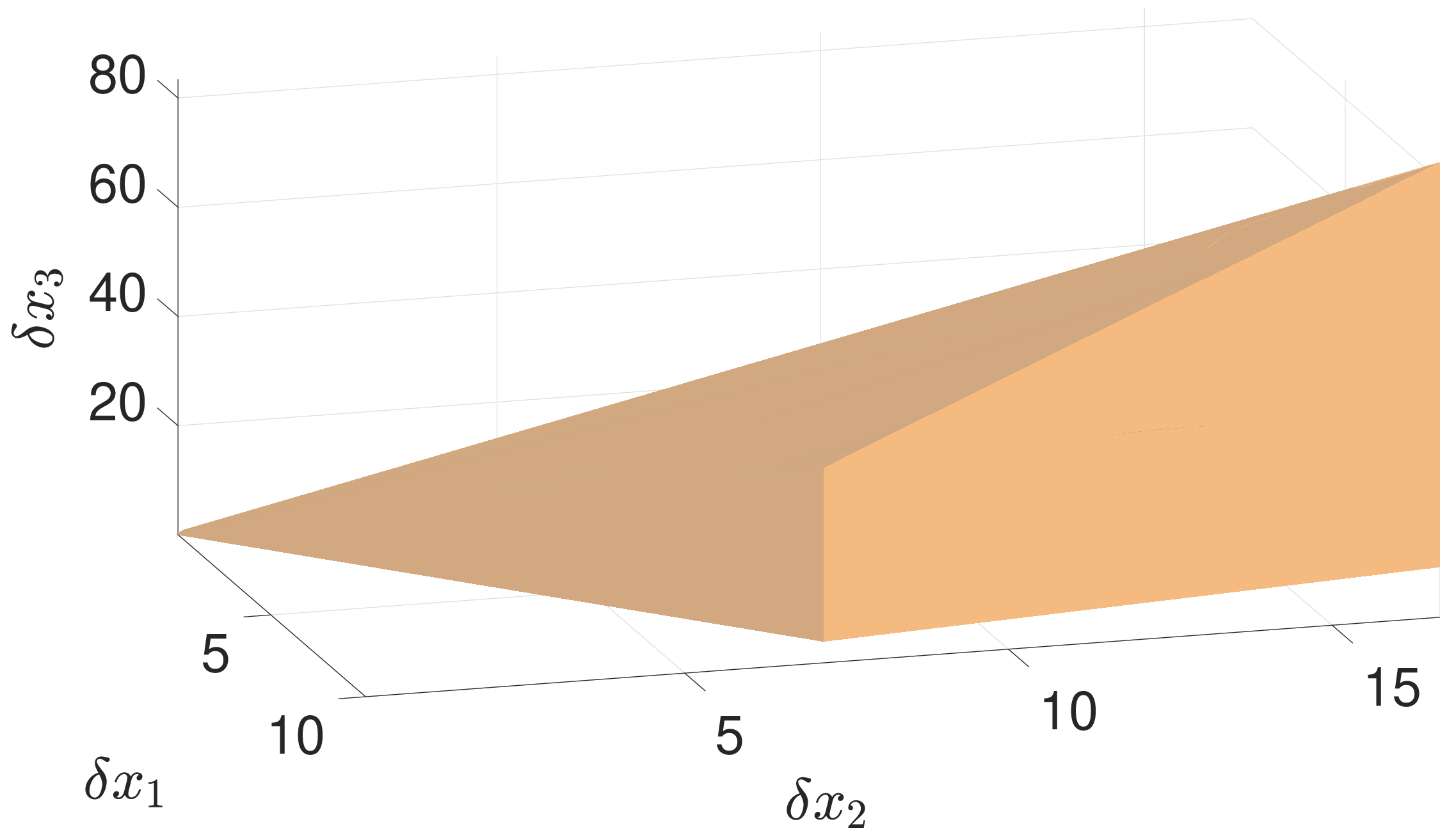}
                \caption{ The polyhedral cone $\mathscr{C}$ with $N=3$ and $k=5$}
                \label{dp-propdpfig-coneC}
            \end{minipage}
            \hfill
            \begin{minipage}[t]{0.44\linewidth}
                \centering
                \begin{tikzpicture}[scale=0.6] 
                    \coordinate (A) at (0,0);
                    \coordinate (B) at (3.5,0);
                    \coordinate (C) at (3,1);
                    \coordinate (D) at (1.5,3);
                    \coordinate (E) at (0,3.5);
                    \coordinate (F) at (-1.2,0.4);
                    \coordinate (G) at (0.95,1.6);
                    \coordinate (H) at (0.4,-1.2);
                    \coordinate (R) at (1.3,0.8);
                    \fill[gray!30, opacity=1] (A) -- (C) -- (D) -- cycle;
                    \draw[->]  (A) to (B);
                    \draw[-]   (A) to (C);
                    \draw[-]   (A) to (D);
                    \draw[->]  (A) to (E);
                    \draw[blue,->]  (A) to (F);
                    \draw[blue,->]  (A) to (G);
                    \draw[blue,->]  (A) to (H);
                    \draw (E) node[above] {${\delta x}_{i+1}$};
                    \draw (D) node[right] {\footnotesize${\delta x}_{i+1}=\beta_{i}{\delta x}_{i}$};
                    \draw (C) node[right] {\footnotesize${\delta x}_{i+1}=\gamma_{i}{\delta x}_{i}$};
                    \draw (B) node[right] { ${\delta x}_{i}$};
                    \draw (G) node[right] {\tiny \hspace{-1.2mm}$\left(\!\frac{\mathrm{d}{\delta\phi}_{i}}{\mathrm{d}t}\!(t_{0},\!\delta\phi_{0},\!\omega),\!\!\frac{\mathrm{d}{\delta\phi}_{i+1}}{\mathrm{d}t}\!(t_{0},\!\delta\phi_{0},\!\omega)\!\right)^{\top}$};
                    \draw (F) node[above right] {\footnotesize$n_{\beta_{i}}$};
                    \draw (H) node[right] {\footnotesize$n_{\gamma_{i}}$};
                    \draw (R) node[right] {\footnotesize$\mathscr{C}_{i}$};
                \end{tikzpicture}
                \caption{Vector fields on $\mathscr{C}_{i}$ and unit outward normal vectors of $\partial\mathscr{C}_{i}$}
                \label{dp-propdpfig-coneC_i}
		  \end{minipage}
	\end{figure}

         To this end, we fix $\omega\in \Omega$ and $i$. Note that, in $(\delta x_{i},\delta x_{i+1})$-plane, the unit outward normal vectors of $\partial\mathscr{C}_{i}$ are 
	\begin{align*}
            n_{\beta_{i}}=\left(-\frac{1}{\sqrt{1+\frac{1}{\beta_i^{2}}}},\frac{1}{\beta_{i}\sqrt{1+\frac{1}{\beta_i^{2}}}}\right)^{\top} ~~\text{and}~~ n_{\gamma_{i}}=\left(\frac{1}{\sqrt{1+\frac{1}{\gamma_i^{2}}}},-\frac{1}{\gamma_{i}\sqrt{1+\frac{1}{\gamma_i^{2}}}}\right)^{\top},
	\end{align*}
        respectively. We will show that, if 
        $\delta\phi(t_{0},\delta\phi_{0},\omega)\in\mathscr{C}$, i.e.,
        \begin{align}\label{dp-propdp-eqndeltaphi}
            \gamma_j{\delta\phi}_{j}(t_{0},\delta\phi_{0},\omega)\leq{\delta\phi}_{j+1}(t_{0},\delta\phi_{0},\omega)\leq \beta_{j}{\delta\phi}_{j}(t_{0},\delta\phi_{0},\omega),~~\text{for any}~~ j\neq i,
        \end{align}
        for some $(t_{0},\delta\phi_{0})\in [0,\infty)\times\mathscr{C}$ with ${\delta\phi}_{i+1}(t_{0},\delta\phi_{0},\omega)=\beta_{i}{\delta\phi}_i(t_{0},\delta\phi_{0},\omega)$, then it must hold that
	\begin{equation}\label{dp-propdp-eqnbeta}
            \left(\frac{\mathrm{d}{\delta\phi}_{i}}{\mathrm{d}t}(t_{0},\delta\phi_{0},\omega),\frac{\mathrm{d}{\delta\phi}_{i+1}}{\mathrm{d}t}(t_{0},\delta\phi_{0},\omega)\right)^{\top}\cdot n_{\beta_{i}}\leq0
	\end{equation}
        (see Figure \ref{dp-propdpfig-coneC_i}). Hereafter, we omit $(t_{0},\delta\phi_{0},\omega)$ for simplicity. In order to show \eqref{dp-propdp-eqnbeta}, for $i=1$, \eqref{dp-propdp-eqnbeta} is equivalent to
	\begin{equation}\label{hc-prophc-eq5}
            \begin{aligned}
                \cos\left(\phi_{1}+z_{1}\right)\cdot{\delta\phi}_{1}+\frac{1}{\beta_{1}}\Big[-\cos\left(\phi_{2}+z_{2}\right)\cdot{\delta\phi}_{2}+k\left({\delta\phi}_{1}-{\delta\phi}_{2}\right)\Big]\leq 0.
            \end{aligned}
        \end{equation}
        Note that ${\delta\phi}_2=\beta_{1}{\delta\phi}_1$. Then \eqref{hc-prophc-eq5} can be rewritten as
        \begin{align*}
            \cos\left(\phi_{1}+z_{1}\right)\cdot{\delta\phi}_{1}-\cos\left(\phi_{2}+z_{2}\right)\cdot{\delta\phi}_{1}+\frac{k}{\beta_{1}}{\delta\phi}_{1}-k{\delta\phi}_{1}\leq \left(2+\frac{k}{\beta_{1}}-k\right){\delta\phi}_{1}.
        \end{align*}
        Since $\frac{1}{\beta_1}=1-\frac{2}{k}$, we have $\left(2+\frac{k}{\beta_1}-k\right){\delta\phi}_1 \leq 0$, whence \eqref{dp-propdp-eqnbeta} holds for $i=1$. While, for $2\leq i\leq N-1$, \eqref{dp-propdp-eqnbeta} is equivalent to
        \begin{equation}\label{hc-prophc-eq7}
            \begin{aligned}
                \cos\left(\phi_{i}+z_{i}\right)\cdot{\delta\phi}_{i}-
                &k\left({\delta\phi}_{i-1}-{\delta\phi}_{i}\right)+\\
                &\frac{1}{\beta_{i}}\Big[-\cos\left(\phi_{i+1}+z_{i+1}\right)\cdot{\delta\phi}_{i+1}+k\left({\delta\phi}_{i}-{\delta\phi}_{i+1}\right)\Big]\leq 0.
            \end{aligned}
        \end{equation}
        Again, noticing that ${\delta\phi}_{i+1}=\beta_{i}{\delta\phi}_{i}$, one can simplify \eqref{hc-prophc-eq7} as
        \begin{equation}\label{hc-prophc-eq8}
            \begin{aligned}
            \cos\left(\phi_{i}+z_{i}\right)\cdot{\delta\phi}_{i}-k{\delta\phi}_{i-1}-\cos\left(\phi_{i+1}+z_{i+1}\right)\cdot{\delta\phi}_{i}+\frac{k}{\beta_{i}}{\delta\phi}_{i}\leq 0. 
            \end{aligned}
        \end{equation}
        Recall that ${\delta\phi}_{i}\leq\beta_{i-1}{\delta\phi}_{i-1}$ since $\delta\phi\in\mathscr{C}$ (see \eqref{dp-propdp-eqndeltaphi}). Then \eqref{hc-prophc-eq8} can be rewritten as
    {\allowdisplaybreaks\begin{align*}
            &\cos\left(\phi_{i}+z_{i}\right)\cdot{\delta\phi}_{i}-k{\delta\phi}_{i-1}-\cos\left(\phi_{i+1}+z_{i+1}\right)\cdot{\delta\phi}_{i}+\frac{k}{\beta_{i}}{\delta\phi}_{i} \\
            \leq&\cos\left(\phi_{i}+z_{i}\right)\cdot{\delta\phi}_{i}-\frac{k}{\beta_{i-1}}{\delta\phi}_{i}-\cos\left(\phi_{i+1}+z_{i+1}\right)\cdot{\delta\phi}_{i}+\frac{k}{\beta_{i}}{\delta\phi}_{i}\\
            \leq&\left(2-\frac{k}{\beta_{i-1}}+\frac{k}{\beta_{i}}\right){\delta\phi}_{i}.
        \end{align*}}With $\frac{1}{\beta_i}=1-\frac{2}{k}i$ for $1\leq i \leq N -1$, one obtains that $\left(2-\frac{k}{\beta_{i-1}}+\frac{k}{\beta_{i}}\right){\delta\phi}_{i}\leq0$, which confirms \eqref{hc-prophc-eq7} (and hence, \eqref{dp-propdp-eqnbeta} holds) for $2\leq i\leq N-1$. Thus, we have proved \eqref{dp-propdp-eqnbeta}.

        Similarly, by repeating the same arguments, one can obtain that
        \begin{equation}\label{dp-propdp-eqngamma}
            \left(\frac{\mathrm{d}{\delta\phi}_{i}}{\mathrm{d}t}(t_{0},\delta\phi_{0},\omega),\frac{\mathrm{d}{\delta\phi}_{i+1}}{\mathrm{d}t}(t_{0},\delta\phi_{0},\omega)\right)^{\top}\cdot n_{\gamma_{i}}\leq0,	
	\end{equation}
        whenever $\delta\phi(t_{0},\delta\phi_{0},\omega)\in\mathscr{C}$ and ${\delta\phi}_{i+1}(t_{0},\delta\phi_{0},\omega)=\gamma_{i}{\delta\phi}_i(t_{0},\delta\phi_{0},\omega)$.

        Together with \eqref{dp-propdp-eqnbeta} and \eqref{dp-propdp-eqngamma}, we have obtained that, on each $\partial\mathscr{C}_i$, the projected direction of $\delta\phi(t,\delta\phi_{0},\omega)$ is not outward from $\mathscr{C}_{i}$ for $1\leq i\leq N-1$. Thus, we have obtained that $\delta\phi(t,\delta\phi_{0},\omega)\in\mathscr{C}$ for any $\delta\phi_{0}\in\mathscr{C}$, which completes the proof that $\Phi$ on $M$ is differentially positive with respect to $\mathcal{C}_{M}$.
    \end{proof}

    Now, in order to state our main results, we introduce the random conal curve with respect to $C_{M}$. For deterministic counterpart, one may refer to \cite{ForniSepulchre16,Lawson89,NiuWang25}.
  
    \begin{definition}\label{dp-def-conalcurve}
        \rm Let $\varphi$ be a random system on $M$ equipped with a random cone field $C_{M}$. Let also $[s_{0},s_{1}]$ be an interval (not necessarily bounded) in $\mathbb{R}$. A map $\eta\colon[s_{0},s_{1}]\times\Omega\to M;(s,\omega)\mapsto\eta(s,\omega)$ is called a \textit{random conal curve} with respect to $C_{M}$ if the following hold:
        \begin{itemize}
            \item [{\rm(i)}] $\eta(\cdot,\omega)\colon[s_{0},s_{1}]\to M$ is continuously differentiable for each $\omega\in\Omega$;
            \item [{\rm(ii)}] $\eta'(s,\omega)\triangleq\frac{\mathrm{d}}{\mathrm{d}s}\eta(s,\omega)\in C_{M}^{\omega}(\eta(s,\omega))$ for any $s\in [s_{0},s_{1}],\omega\in\Omega$;
            \item [{\rm(iii)}] $\eta([s_0,s_1],\cdot)\colon\omega\mapsto\eta([s_0,s_1],\omega)\triangleq\{\eta(s,\omega),s\in[s_{0},s_{1}]\}$ is a random set on $M$.
        \end{itemize}
    \end{definition}
    Moreover, a random conal curve $\eta$ is \textit{closed} if $\eta(s_{0},\omega)=\eta(s_{1},\omega)$  for each $\omega\in\Omega$; $\eta$ is \textit{tempered} if $\omega\mapsto\eta([s_{0},s_{1}],\omega)$ is a tempered random set; and $\eta$ is \textit{invariant} if $\varphi(t,\eta([s_{0},s_{1}],\omega),\omega)=\eta([s_{0},s_{1}],\theta_{t}\omega)$ for any $t\in\mathbb{R},\omega\in\Omega$.\vspace{0.25\baselineskip} 
    
    A useful property of the random conal curve is the following
    \begin{proposition}\label{dp-prop-conalcurve}
         Let $\varphi$ be a differentially positive random system on $M$ with respect to  $C_{M}$. Let also $\eta\colon[s_{0},s_{1}]\times\Omega\to M$ be a random conal curve. Then, for each $\omega\in\Omega$ and $t\geq0$, the $\varphi$-forward time evolution  $\varphi(t,\eta(\cdot,\omega),\omega)$ is a random conal curve. 
    \end{proposition}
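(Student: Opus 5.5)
We need to check (i)–(iii) of Definition \ref{dp-def-conalcurve} for the map $\tilde\eta(s,\omega)\triangleq\varphi(t,\eta(s,\omega),\omega)$, $s\in[s_0,s_1]$, with $t\ge0$ fixed. As in the definition of differential positivity, the cone at the image point is taken in the fiber $\theta_t\omega$, i.e.\ we must show $\tilde\eta'(s,\omega)\in C_M^{\theta_t\omega}(\tilde\eta(s,\omega))$. Equivalently, $s\mapsto \varphi(t,\eta(s,\theta_{-t}\omega),\theta_{-t}\omega)$ is a random conal curve in the usual sense with fiber $\omega$. I would phrase the result this way and treat both readings at once.

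\emph{Step 1: regularity.} By (iv) in the definition of a $C^1$-random dynamical system, $\varphi(t,\cdot,\omega)$ is differentiable with derivative continuous in $x$. Property (i) of $\eta$ says $\eta(\cdot,\omega)$ is $C^1$. The chain rule therefore gives that $\tilde\eta(\cdot,\omega)$ is continuously differentiable, with
\[
\tilde\eta'(s,\omega)=\mathrm{d}\varphi(t,\eta(s,\omega),\omega)\,\eta'(s,\omega).
\]
Continuity of $s\mapsto\tilde\eta'(s,\omega)$ follows from the continuity of $(x,v)\mapsto \mathrm{d}\varphi(t,x,\omega)v$ on $TM$ composed with the continuous map $s\mapsto(\eta(s,\omega),\eta'(s,\omega))$.

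\emph{Step 2: conality.} By (ii) we have $\eta'(s,\omega)\in C_M^\omega(\eta(s,\omega))$. Definition \ref{dp-def-dp} then gives
\[
\mathrm{d}\varphi(t,\eta(s,\omega),\omega)\,C_M^\omega(\eta(s,\omega))\subset C_M^{\theta_t\omega}(\tilde\eta(s,\omega)).
\]
Hence $\tilde\eta'(s,\omega)\in C_M^{\theta_t\omega}(\tilde\eta(s,\omega))$ for every $s\in[s_0,s_1]$. This is the core of the statement and it is immediate.

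\emph{Step 3: measurability.} This is the main obstacle: we must show that $\omega\mapsto\varphi(t,\eta([s_0,s_1],\omega),\omega)$ (or its $\theta_{-t}$-shifted version) is a random set.

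\begin{enumerate}
\item For $x\in M$, write $d_M(x,\tilde\eta([s_0,s_1],\omega))=\inf_{s\in\mathbb{Q}\cap[s_0,s_1]}d_M\big(x,\varphi(t,\eta(s,\omega),\omega)\big)$, which is valid by continuity in $s$. Endpoints are included if finite; for an unbounded interval, use a countable dense subset.
\item It then suffices that $\omega\mapsto\varphi(t,\eta(s,\omega),\omega)$ is measurable for each fixed $s$.
\item This composition is measurable because $\varphi$ is jointly measurable by (i), together with measurability of $\omega\mapsto\eta(s,\omega)$. The latter I would derive from (iii), the random-set property of $\eta([s_0,s_1],\cdot)$.
\end{enumerate}

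If pointwise measurability of $\eta(s,\cdot)$ is not directly available from (iii), I would fall back on a different route. Since $\eta([s_0,s_1],\omega)$ is a random set, it admits a Castaing representation by measurable selections $\{\xi_n\}$ (\cite{CastaingValadier77}). The image under the Carath\'eodory map $x\mapsto\varphi(t,x,\omega)$, which is continuous in $x$ and measurable in $\omega$, has closure equal to the closure of $\{\varphi(t,\xi_n(\omega),\omega)\}$. The distance function can then be written as an infimum over $n$, which is measurable. Finally, $\theta_{-t}$ is measurable, so the shifted version is also a random set.
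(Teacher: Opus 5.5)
Your proposal is correct and follows the paper's proof. Conality comes from the chain rule plus Definition~\ref{dp-def-dp}, and it holds in the fiber $\theta_t\omega$; this is why the object actually used in Section~\ref{section-pfmainth-b} is $s\mapsto\varphi(t,\eta(s,\theta_{-t}\omega),\theta_{-t}\omega)$, as you note. Measurability comes from continuity of $\varphi(t,\cdot,\omega)$ together with joint measurability of $\varphi$, and your Castaing-representation argument makes the paper's one-line claim precise. Use that fallback rather than your first route: (iii) only constrains the image set, so it cannot give measurability of $\omega\mapsto\eta(s,\omega)$, since an $\omega$-dependent non-measurable reparametrization leaves the image unchanged.
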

    \begin{proof}
       Fix $t\geq0$ and $\omega\in\Omega$. Clearly,  the continuous differentiability of $\varphi(t,\eta(\cdot,\allowbreak \omega), \omega)\colon [s_{0},s_{1}]\to M$ follows from those of $\varphi(t,\cdot,\omega)$ and $\eta(\cdot,\omega)$. In addition,    
        \begin{align*}
            \frac{\mathrm{d}}{\mathrm{d}s}\varphi(t,\eta(s,\omega),\omega)&=\mathrm{d}\varphi(t,\eta(s,\omega),\omega)\frac{\mathrm{d}}{\mathrm{d}s}\eta(s,\omega)\\
            &\in\mathrm{d}\varphi(t,\eta(s,\omega),\omega)C_{M}^{\omega}(\eta(s,\omega))\subset C_{M}^{\theta_{t}\omega}(\varphi(t,\eta(s,\omega),\omega)),
        \end{align*}
       for any $s\in[s_0,s_1]$. This is due to the fact that $\eta$ is a random conal curve and $\varphi$ is differentially positive with respect to $C_{M}$. Moreover, recall that $\eta([s_{0},s_{1}],\cdot)$ is a random set. It then follows from the continuity of $\varphi(t,\cdot,\omega)$ and $(\mathcal{B}(M)\otimes\mathcal{F},\mathcal{B}(M))$-measurability of $\varphi(t,\cdot,\cdot)$ that the $\varphi$-forward time evolution of $\eta$ is a random conal curve with respect to $C_{M}$.
    \end{proof}

    \subsection{Main Results} Now, we are ready to present our main results in this paper.
    
    \begin{theorem}\label{mainth-a}
        For the random system $\Phi$ in \eqref{prly-eqn-tildephi} on $M$, there exists a global random attractor $\mathcal{A}\colon\Omega\to2^{M}$ such that, for any $M$-valued tempered random set $D$ and $\omega\in\Omega$, 
        \begin{align}\label{dp-maintha-attract}
            \operatorname{dist}\big(\Phi(t,D(\theta_{-t}\omega),\theta_{-t}\omega),\mathcal{A}(\omega)\big)\to0,~~\text{as}~~ t\to\infty.
        \end{align}
        Moreover, if coupling coefficient $k$ in \eqref{intro-eqn-psi} is sufficiently large, then  for each $\omega\in \Omega$, $\mathcal{A}(\omega)$ is a one-dimensional  $C^{1}$-smooth closed curve. 
    \end{theorem}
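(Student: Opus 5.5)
Existence comes first, via Lemma \ref{prly-thm-ator}; the one-dimensional closed-curve structure then comes from an invariant-manifold argument for $\phi$ on $\mathbb{R}^N$, which is afterwards wrapped onto $M$.

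\textbf{Existence of $\mathcal{A}$.} We write $\phi=\bar\phi\,\mathbf{e}+v$, where $\bar\phi=\phi_1$ is the "phase" and $v=(\phi_2-\phi_1,\ldots,\phi_N-\phi_{N-1})^{\top}$ collects the differences. On $M$ the phase coordinate lives on $\mathbb{S}^1$, so only $v$ must be bounded. From \eqref{prly-eqn-phi}, for $i\ge2$,
\[
\dot v_{i}=-k v_{i}+k v_{i-1}+(\text{bounded terms})+H_i-H_{i-1}
\]
(with the convention $v_1$ absent, so $\dot v_2=-kv_2+\dots$). This is a lower-triangular linear system with diagonal $-k$, forced by bounded nonlinearities and by the tempered processes $H_i(\theta_t\omega)$ from Lemma \ref{pre-lem-hi}. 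Variation of constants gives $|v(t)|\le Ce^{-kt/2}|v(0)|+r(\omega)$ with a tempered $r$, in the pull-back sense. Hence the random set $K(\omega)=\{\hat x\in M: |v|\le 2r(\theta\cdot\omega)+1\}\cong \mathbb{S}^1\times\text{ball}$ is compact and $\mathcal D$-absorbing. Lemma \ref{prly-thm-ator} then yields $\mathcal{A}$ together with \eqref{dp-maintha-attract}.

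\textbf{One-dimensionality for large $k$.} We work with $\phi$ on $\mathbb{R}^N$ in the coordinates $(\bar\phi,v)$. The linear part has spectrum $\{0,-k,\ldots,-k\}$, and the nonlinearity $\alpha-\sin(\cdot+z)$ has Lipschitz constant $1$, independent of $k$. For $k$ large the spectral gap $k\gg 1$ dominates. A Lyapunov--Perron construction in the pull-back sense (as in \cite{ChowShenZhou07}) therefore gives a random inertial manifold $\mathcal{W}(\omega)=\{\bar\phi\mathbf{e}+h(\bar\phi,\omega)\}$, with $h$ being $C^1$ and $2\pi$-periodic in $\bar\phi$. Periodicity follows from uniqueness of the Lyapunov--Perron fixed point combined with \eqref{prly-eqn-phiperiod}. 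The same construction gives invariant stable foliations with exponential attraction rate of order $-k/2$. Every tempered set is then pulled exponentially onto $\mathcal{W}$, and invariance forces $\mathcal{A}(\omega)\subset \mathcal{W}(\omega)\bmod 2\pi\mathbf{e}$.

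It remains to show equality. $\mathcal{W}\bmod 2\pi\mathbf{e}$ is a $C^1$ closed curve, namely a graph over $\mathbb{S}^1$, and it is invariant, compact, and tempered, since $h$ is bounded by a tempered variable. Because $\mathcal{A}$ attracts every tempered set and is the maximal such invariant compact set, it contains every invariant tempered compact set. This gives $\mathcal{A}(\omega)=\mathcal{W}(\omega)\bmod 2\pi\mathbf{e}$, a one-dimensional $C^1$ closed curve.

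\textbf{Main obstacle.} The hardest step is the a priori estimates behind the Lyapunov--Perron contraction in the unidirectional setting. Unlike the symmetric $T$, the matrix $kL$ is not self-adjoint, so we cannot split off the zero mode orthogonally. The plan is to use the triangular structure instead: the zero-mode projection is $\phi\mapsto\phi_1\mathbf{e}$ along the complementary subspace $\{x_1=0\}$, and the semigroup on that complement is $e^{-kt}$ times a polynomial in $t$ of degree at most $N-2$. Absorbing the polynomial factor into a slightly weaker rate, say $e^{-(k-1)t}$ times a constant $C_N$ depending only on $N$, still leaves the gap condition $C_N\cdot 1<(k-1)/2$, which holds for large $k$. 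The other point needing care is the temperedness of the forcing terms $H_i$, which grow like $k$. This factor is compensated by the $1/k$ arising in the integral of $e^{-kt}$, so the bounds remain uniform in $k$.
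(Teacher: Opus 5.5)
Your plan follows the paper's proof step by step:
\begin{itemize}
\item the absorbing set is a tube around the diagonal, obtained by bounding the differences through the explicit lower-triangular $e^{kLt}$ (the paper bounds $d_{\mathbb{R}^N}(\phi,L_0)$, which is equivalent to your bound on $v$);
\item the one-dimensionality comes from a Lyapunov--Perron center-unstable manifold for the projection $\phi\mapsto\phi_1\mathbf{e}$ along $\{x_1=0\}$, with a stable foliation giving attraction onto it;
\item $2\pi\mathbf{e}$-periodicity of $h$ follows from uniqueness of the fixed point;
\item the identification uses the two inclusions $\mathscr{A}\subset W^{cu}$ and $W^{cu}\subset\mathscr{A}$, exactly as you describe.
\end{itemize}

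One correction is needed in your last paragraph. The polynomial factor is a polynomial in $kt$, since the entries are $(kt)^m/m!$; it is not a polynomial in $t$ with $k$-free coefficients. Forcing $e^{-kt}\sum_{m}(kt)^m/m!\le C\,e^{-(k-1)t}$ therefore makes $C$ of order $k^{N-2}$, and your condition $C_N<(k-1)/2$ fails for $N\ge 4$.

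There are two ways to repair this. You can absorb the factor into $e^{-kt/2}$ instead, as you already did in the existence step; the resulting constant depends only on $N$. Alternatively, integrate exactly as the paper does: $\int_0^\infty p(s)e^{-(k-\gamma)s}\,\mathrm{d}s=\sum_{m=1}^{N-1}2(N-m)k^{m-1}(k-\gamma)^{-m}=O(1/k)$. Then the contraction constant is $\sqrt{N}/\gamma+O(1/k)$, which is less than $1$ for a fixed $\gamma>\sqrt{N}$ and all large $k$. This is the gap condition \eqref{eq-gap}.
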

    
        With the help of the differential positivity of random system $\Phi$ with respect to $\mathcal{C}_{M}$ on $M$ (see Proposition \ref{do-prop-cone}), we  further obtain that $\mathcal{A}$ is a random conal curve with respect to $\mathcal{C}_{M}$:
    \begin{theorem}\label{mainth-b}
        Let all hypotheses hold in Theorem \ref{mainth-a}. Then, the global random attractor $\mathcal{A}$ is a closed random conal curve, denoted by $l\colon[0,2\pi]\times\Omega\to M$, with respect to $\mathcal{C}_{M}$ in \eqref{eq-cone-field}. More precisely, for $\omega\in\Omega$, one has $\mathcal{A}(\omega)=\left\{l(\tau,\omega)\mid \tau\in[0,2\pi]\right\}$,
        \begin{align}\label{dp-mainthb-conalcurve}
            l(0,\omega)=l(2\pi,\omega)~~\text{and}~~
            \frac{\mathrm{d}}{\mathrm{d}\tau}l(\tau,\omega)\in \mathcal{C}_{M}^{\omega}(l(\tau,\omega)),~~\text{for~any}~\tau\in[0,2\pi].
        \end{align}
    \end{theorem}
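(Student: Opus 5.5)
The plan is to approximate $\mathcal{A}(\omega)$ by pull-back evolutions of one explicit closed conal curve, and to pass the cone condition to the limit.

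\emph{Initial curve.} I take the diagonal
\[
\eta(s,\omega)=s\mathbf{e}\bmod 2\pi\mathbf{e},\qquad s\in[0,2\pi].
\]
It is closed, since $\eta(0,\omega)=\eta(2\pi,\omega)$. It is deterministic, hence tempered and a random set. It is conal, because $\eta'=\mathbf{e}$ and $\gamma_i\le 1\le\beta_i$ give $\mathbf{e}\in\mathscr{C}$.

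\emph{Evolved curves.} For $t\ge0$ I set
\[
\eta_t(s,\omega)\triangleq\Phi\big(t,\eta(s,\theta_{-t}\omega),\theta_{-t}\omega\big).
\]
By Propositions \ref{do-prop-cone} and \ref{dp-prop-conalcurve}, each $\eta_t$ is a closed conal curve with respect to $\mathcal{C}_M^{\omega}$ at the fiber $\omega$. Lifted to $\mathbb{R}^N$, it is $\tilde\eta_t(s,\omega)=\phi(t,s\mathbf{e},\theta_{-t}\omega)$. By \eqref{prly-eqn-phiperiod} the endpoints satisfy $\tilde\eta_t(2\pi)=\tilde\eta_t(0)+2\pi\mathbf{e}$, and the tangent $\partial_s\tilde\eta_t=\delta\phi(t,\mathbf{e},\theta_{-t}\omega)$ lies in $\mathscr{C}$.

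\emph{Key feature of $\mathscr{C}$ (requirement (b)).} Every vector $v\in\mathscr{C}$ satisfies $v_1\le v_i\le c\,v_1$, where $c=\prod\beta_i$ is finite because $k>2(N-1)$. Hence $\|v\|$ is comparable to $v_1$, and the total increment $\int_0^{2\pi}\partial_s\tilde\eta_{t,1}\,ds=2\pi$ is fixed. So I reparametrize $\tilde\eta_t$ by its first coordinate, $\tau=\tilde\eta_{t,1}-\tilde\eta_{t,1}(0)\in[0,2\pi]$; this is legitimate if $\delta\phi_1>0$, which holds since $\mathrm{Int}$ of the cone is invariant and $\mathbf{e}$ is interior. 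The resulting curves $l_t(\cdot,\omega)$ are uniformly Lipschitz, with constant at most $\sqrt{N}c$. They also lie in the absorbing compact set $K(\omega)$ for $t\ge t_K(\omega)$, which comes from Theorem \ref{mainth-a} and the proof of the attractor via Lemma \ref{prly-thm-ator}.

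\emph{Limit curve.} By Arzelà–Ascoli there is a sequence $t_n\to\infty$ with $l_{t_n}(\cdot,\omega)\to l(\cdot,\omega)$ uniformly. The limit $l$ is closed and Lipschitz, and its difference quotients lie in $\mathscr{C}$, since $\mathscr{C}$ is a closed convex cone and the difference quotients of $l_{t_n}$ are averages of tangent vectors. The image of $l$ lies in $\mathcal{A}(\omega)$ by the $\omega$-limit formula in Lemma \ref{prly-thm-ator}. Conversely, the image covers $\mathcal{A}(\omega)$: by Theorem \ref{mainth-a}, $\mathcal{A}(\omega)$ is a single closed $C^1$ curve, and the limit is a closed curve winding once around $\mathbb{S}^1$ (it has degree one, since $\tau$ runs over a full period $2\pi$). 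A closed curve of degree one inside a topological circle must be surjective. I would also try to avoid passing to subsequences, since $\mathcal{A}(\omega)$ is a graph over the first coordinate and the limit is therefore forced.

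\emph{Main obstacle: $C^1$ regularity and the tangent condition.} I use that $\mathcal{A}(\omega)$ is a $C^1$ curve (Theorem \ref{mainth-a}). Parametrizing it by $\tau$, which is valid once it is known to be a graph over $\phi_1$, its tangent is a limit of difference quotients lying in $\mathscr{C}$, hence in the closed cone. This gives \eqref{dp-mainthb-conalcurve}.

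Measurability of $l$ follows from the measurability of $\mathcal{A}$ and of the first-coordinate parametrization, where one fixes the base point measurably, e.g.\ as the point with $\phi_1\equiv 0 \bmod 2\pi$. The delicate point is that the Lipschitz bound must be uniform in $t$. This is exactly where the refined cone $\mathcal{C}_M$, rather than $\mathsf{C}_M$, is essential.
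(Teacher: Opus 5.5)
Your proposal is correct and takes essentially the same route as the paper's proof: pull-back evolution of a closed conal curve, reparametrization by the first coordinate, equicontinuity from the cone bounds, Arzel\`a--Ascoli, inclusion in $\mathcal{A}(\omega)$, and equality by a topological argument. Your explicit diagonal choice of $l^{0}$, the degree-one surjectivity argument, and the use of the $C^{1}$ graph structure for the tangent condition fill in steps the paper treats as ``clearly.'' One small slip: since $\gamma_i<1$, vectors in $\mathscr{C}$ satisfy $\gamma_{1}\cdots\gamma_{i-1}v_{1}\le v_{i}\le\beta_{1}\cdots\beta_{i-1}v_{1}$ (as in \eqref{do-pfthb-equi}), not $v_{1}\le v_{i}$; only the upper bound is used, so nothing breaks.
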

    
    An immediate corollary of Theorems \ref{mainth-a}-\ref{mainth-b} is the following
    
    \begin{corollary}\label{dp-coro-do} 
    Let all hypotheses hold in Theorem \ref{mainth-a}. Then the  unidirectional coupling stochastic system \eqref{intro-eqn-psi} admits a dynamical order, i.e., system \eqref{intro-eqn-psi} admits a simple asymptotic one-dimensional structure which is totally ordered with respect to the standard order ``$\le$" in $\mathbb{R}^{N}$.
    \end{corollary}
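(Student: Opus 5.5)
We obtain Corollary \ref{dp-coro-do} by unwrapping the closed conal curve of Theorem \ref{mainth-b} from $M$ to $\mathbb{R}^N$ and then transferring the result from $\phi$ back to $\psi$ via \eqref{prly-eqn-psiphiOU}.

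\textbf{Step 1: lift the curve.} Fix $\omega\in\Omega$ and let $l(\cdot,\omega)\colon[0,2\pi]\to M$ be the closed conal curve with $\mathcal{A}(\omega)=l([0,2\pi],\omega)$. Since the projection $\pi\colon\mathbb{R}^N\to M$ is a covering map, $l(\cdot,\omega)$ lifts to a $C^1$ curve $\tilde l(\cdot,\omega)\colon[0,2\pi]\to\mathbb{R}^N$. Closedness $l(0,\omega)=l(2\pi,\omega)$ gives $\tilde l(2\pi,\omega)=\tilde l(0,\omega)+2\pi n\mathbf{e}$ for some integer $n$.

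We extend periodically by setting $\tilde l(\tau+2\pi m,\omega)=\tilde l(\tau,\omega)+2\pi nm\,\mathbf{e}$ for $m\in\mathbb{Z}$. This defines a curve on $\mathbb{R}$. Its derivative lies in $\mathscr{C}\subset\mathbb{R}^N_+$ everywhere, because $\pi$ is a local isometry with identity differential in coordinates. Its image is $\mathcal{L}(\omega)\triangleq\pi^{-1}(\mathcal{A}(\omega))$, the unwrapping of $\mathcal{A}$. By \eqref{prly-eqn-phiperiod}, $\mathcal{L}$ is $\phi$-invariant, and it attracts tempered sets in $\mathbb{R}^N$ modulo $2\pi\mathbf{e}$ by \eqref{dp-maintha-attract}.

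\textbf{Step 2: the curve is strictly monotone and $n\neq 0$ (main obstacle).} The derivative $\tilde l'$ lies in $\mathscr{C}$, so each coordinate of $\tilde l$ is nondecreasing. In addition $\gamma_i\tilde l_i'\le\tilde l_{i+1}'\le\beta_i\tilde l_i'$ with $\gamma_i,\beta_i>0$, so all coordinate derivatives vanish simultaneously or are simultaneously positive.

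To rule out $\tilde l'=0$ we use the one-dimensional $C^1$ manifold structure from Theorem \ref{mainth-a}. Under this structure $l$ may be chosen as a regular parametrization, with $l'\neq0$. I expect this step to need care: I would either reparametrize (using that $\mathcal{A}(\omega)$ is a $C^1$ embedded circle, so a regular conal parametrization exists), or note that Theorem \ref{mainth-b}'s $l$ is built as a graph over the $\mathbf{e}$-direction.

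Once $\tilde l'\in\mathscr{C}\setminus\{0\}$, every coordinate is strictly increasing. Hence $\tilde l(2\pi)-\tilde l(0)$ has all entries positive and equals $2\pi n\mathbf{e}$, which forces $n\ge1$. The lifted curve is then unbounded in both directions. In fact $n=1$ should hold, since $\tilde l$ is injective on $M$ over one period; this is not needed for the ordering.

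\textbf{Step 3: total order.} For $s<s'$ we have
\[
\tilde l(s',\omega)-\tilde l(s,\omega)=\int_s^{s'}\tilde l'(\tau,\omega)\,\mathrm{d}\tau\in\mathbb{R}^N_+ ,
\]
because $\mathbb{R}^N_+$ is a closed convex cone. So any two distinct points of $\mathcal{L}(\omega)$ are comparable, and $\mathcal{L}(\omega)$ is totally ordered with respect to ``$\le$''.

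\textbf{Step 4: transfer to $\psi$.} By \eqref{prly-eqn-psiphiOU}, the corresponding structure for $\psi$ is $\mathcal{L}(\omega)+z(\omega)$, a translate of $\mathcal{L}(\omega)$. Translation preserves differences, so this set is also totally ordered. It is also a one-dimensional smooth unbounded curve that attracts $\psi$-trajectories in the pull-back sense modulo $2\pi\mathbf{e}$, because $z$ is tempered by Lemma \ref{pre-lem-hi}. This is the claimed dynamical order.
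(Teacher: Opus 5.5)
Your route (lift the conal curve, integrate its derivative in $\mathscr{C}\subset\mathbb{R}^N_+$, translate by $z(\omega)$) is the paper's. However, there is a gap where Steps 1 and 2 meet. You assert without proof that the image of your periodically extended lift is all of $\mathcal{L}(\omega)=\pi^{-1}(\mathcal{A}(\omega))$, and you say explicitly that $n=1$ is ``not needed for the ordering''. It is needed.

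Suppose $l$ runs once around $\mathcal{A}(\omega)$ but $\tilde l(2\pi,\omega)=\tilde l(0,\omega)+2\pi n\mathbf{e}$ with $n\ge 2$. Then $\pi^{-1}(\mathcal{A}(\omega))$ is the disjoint union of the $n$ curves $\tilde l(\mathbb{R},\omega)+2\pi j\mathbf{e}$, $0\le j\le n-1$, and your Step 3 orders only one of them. The union is not totally ordered. Take a point $p$ off a component $C$ and suppose it is comparable with all of $C$. Then $C=\{q\in C\mid q\ge p\}\cup\{q\in C\mid q\le p\}$ covers the connected set $C$ by two closed sets, both nonempty because $C$ is unbounded in both directions. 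So they meet at some $q$ with $q=p$, which is a contradiction.

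Injectivity of $l$ over one period does not rule this out. For instance, when $N=3$ and $a>0$ is small, $s\mapsto(s,\,s+a\sin\frac{s}{2},\,s+a\cos\frac{s}{2})$ projects to an embedded closed $\mathcal{C}_M$-conal curve in $M$ that winds twice. The invariant, attracting object is the full preimage, which is what your attraction claim concerns, so you cannot simply keep one lift.

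The missing input is that $\mathcal{A}(\omega)$ winds exactly once, i.e.\ its unwrapping is connected. The paper builds this into the parametrization $\mathscr{A}(\omega)=\ell(\mathbb{R},\omega)$ with $\ell(\tau+2\pi,\omega)=\ell(\tau,\omega)+2\pi\mathbf{e}$. You can obtain it in either of two ways:
\begin{itemize}
\item From Section~\ref{section-pfmainth-a}, where $\mathscr{A}(\omega)=W^{cu}(\omega)=\{\xi+h(\xi,\omega)\mid \xi\in E^{c}\}$ is a connected graph over the line $E^{c}=\mathbb{R}\mathbf{e}$.
\item From the proof of Theorem~\ref{mainth-b}, where the approximating lifts satisfy $\tilde\ell^{t}_1(\tau,\omega)=\tau$ and $\tilde\ell^{t}(\tau+2\pi,\omega)=\tilde\ell^{t}(\tau,\omega)+2\pi\mathbf{e}$. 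Both properties pass to the lifted limit curve, with the first coordinate fixed up to an additive constant.
\end{itemize}
The second way also settles your regularity worry in Step 2. That worry was unnecessary anyway. Total order needs only $\tilde l'\in\mathscr{C}$, zeros allowed. Also $n\ge 1$ follows from $2\pi n\mathbf{e}=\int_0^{2\pi}\tilde l'\,\mathrm{d}\tau\in\mathbb{R}^N_+$ together with $\tilde l$ being nonconstant.

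Finally, once $\mathcal{L}(\omega)$ is known to be a single $2\pi\mathbf{e}$-periodic curve, your ``attraction modulo $2\pi\mathbf{e}$'' is genuine attraction in $\mathbb{R}^N$. This follows from $d_{\mathbb{R}^N}(y,\mathcal{L}(\omega))=d_M(\hat y,\mathcal{A}(\omega))$, which is the form in which the paper states the result for $\mathscr{B}=\mathscr{A}+z$.
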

    \begin{proof}
        Let $\mathscr{A}$ be the unwrapping of $\mathcal{A}$ in Theorem \ref{mainth-a} from $M$ to $\mathbb{R}^{N}$, that is, 
	\begin{align}\label{dp-coro-eqnscrA}
		\mathscr{A}(\omega)=\left\{x\in\mathbb{R}^{N}\mid x\,\bmod\,2\pi\mathbf{e}\in\mathcal{A}(\omega)\right\},\quad\omega\in\Omega.
	\end{align}
        By virtue of Theorem \ref{mainth-a}, for each $\omega\in\Omega$, $\mathscr{A}(\omega)$ is a one-dimensional curve in $\mathbb{R}^{N}$. Due to the expression of $\mathcal{A}$ in Theorem \ref{mainth-b}, one can give the 
        parametrization $\mathscr{A}(\omega)=\{\ell(\tau,\omega)\mid \tau\in\mathbb{R}\}$ with
        \begin{align}\label{dp-coro-eqnperiod}
            \ell(\tau+2\pi,\omega)=\ell(\tau,\omega)+2\pi\mathbf{e},\quad \tau\in\mathbb{R},\,\omega\in\Omega.
        \end{align}
        We now define a one-dimensional $\mathbb{R}^{N}$-valued random set $\mathscr{B}$ as
        \begin{align*}
            \mathscr{B}\colon\omega\mapsto\mathscr{B}(\omega)=\left\{x+z(\omega)\mid x\in\mathscr{A}(\omega)\right\},~~\text{for}~~\omega\in\Omega,
        \end{align*}
        where the random variable $z$ is from Ornstein-Uhlenbeck process given in \eqref{prly-eqn-z}. 
        
        Firstly, we show that the random set $\mathscr{B}$ is globally attractive in the sense that, for any $\mathbb{R}^{N}$-valued  tempered random set $D$,
        \begin{align}\label{dp-coro-psiconverge}
            \lim_{t\to \infty}\operatorname{dist}\big(\psi\left(t,D(\theta_{-t}\omega),\theta_{-t}\omega\right),\mathscr{B}(\omega)\big)=0~~\text{for}~~ \omega\in\Omega.
        \end{align}
       For this purpose, we define $U(\omega)=\{x-z(\omega)\mid x\in D(\omega)\}$ for each $\omega\in\Omega$.  Due to \eqref{prly-eqn-psiphiOU} and the translation invariance of the Hausdorff semi-distance, one needs to show
        \begin{align}\label{dp-coro-eqn1}
            \lim_{t\to\infty}\operatorname{dist}\big(\phi\left(t,U(\theta_{-t}\omega),\theta_{-t}\omega\right),\mathscr{A}(\omega)\big)=0 ~~\text{for}~~ \omega\in\Omega.
        \end{align}
        For such an $\mathbb{R}^{N}$-valued tempered set $U$, there exist some tempered random variable $r\colon\Omega\to\mathbb{R}_{+}$ and $q\in\mathbb{R}^{N}$ such that $U(\omega)\subset\{x\in\mathbb{R}^{N}\mid \|x-q\|\leq r(\omega)\}$ for all $\omega\in\Omega$. Let $L_q=\{q+\lambda\cdot 2\pi\mathbf{e}\mid\lambda\in \mathbb{R}\}\subset\mathbb{R}^N$ be the straight-line through $q$ along the direction $2\pi\mathbf{e}$, and define
        \begin{align*}
            V(\omega)=\left\{x\in\mathbb{R}^{N}\;\middle|\; d_{\mathbb{R}^{N}}(x,L_{q})\leq r(\omega)\right\},\quad\omega\in\Omega.
        \end{align*}
        Clearly, $U(\omega)\subset V(\omega)$ for $\omega\in\Omega$, 
        and $\hat{V}\triangleq V\,\bmod\,2\pi\mathbf{e}$ is an $M$-valued tempered set. Set $\mathscr{V}^{t}(\omega)\triangleq\phi\left(t,V(\theta_{-t}\omega),\theta_{-t}\omega\right)\subset\mathbb{R}^{N}$ and $\mathcal{V}^{t}(\omega)\triangleq\Phi\left(t,\hat{V}(\theta_{-t}\omega),\theta_{-t}\omega\right)\allowbreak\subset M$. Then for any $\omega\in\Omega$, $y\in\mathscr{V}^{t}(\omega)$ and $\hat{y}\triangleq y\mod2\pi\mathbf{e}\in \mathcal{V}^{t}(\omega)$, 
        \begin{align}\label{dp-coro-distance}
            d_{\mathbb{R}^{N}}(y,\mathscr{A}(\omega))=\inf_{x\in\mathscr{A}(\omega),n\in\mathbb{Z}}\|y-x-n\cdot2\pi\mathbf{e}\|=d_{M}\big(\hat{y},\mathcal{A}(\omega)\big),
        \end{align}
        where the first equality follows from $\mathscr{A}(\omega)=\{x+n\cdot2\pi\mathbf{e}\mid x\in\mathscr{A}(\omega),n\in\mathbb{Z}\}$ (see \eqref{dp-coro-eqnscrA}) and the second equality follows from the definition of $d_{M}$ (see \eqref{prly-eqn-dM}). Let $\mathscr{U}^{t}(\omega)\triangleq\phi\left(t,U(\theta_{-t}\omega),\theta_{-t}\omega\right)\subset\mathbb{R}^{N}$. Due to \eqref{dp-coro-distance}, one has
        \begin{align*}
            \operatorname{dist}\big(\mathscr{U}^{t}(\omega),\mathscr{A}(\omega)\big)\leq\operatorname{dist}\big(\mathscr{V}^{t}(\omega),\mathscr{A}(\omega)\big)=\operatorname{dist}\big(\mathcal{V}^{t}(\omega),\mathcal{A}(\omega)\big),\quad \omega\in\Omega.
        \end{align*}
        Therefore, it follows from \eqref{dp-maintha-attract} in Theorem \ref{mainth-a} that   $\operatorname{dist}\big(\mathscr{U}^{t}(\omega),\mathscr{A}(\omega)\big)\to0$ as $t\to \infty$ for $\omega\in\Omega$, that is, \eqref{dp-coro-eqn1} holds; and hence, we have proved \eqref{dp-coro-psiconverge}.

        Next, we show that $\mathscr{B}$ is totally ordered with respect to the standard order ``$\le$" in $\mathbb{R}^{N}$, that is, for each $\omega\in\Omega$ and any two distinct points $b_{1},b_{2}\in\mathscr{B}(\omega)$, $b_{1}-b_{2}\in\mathbb{R}_{+}^{N}\cup (-\mathbb{R}_{+}^{N})$. In fact, by combining with \eqref{dp-mainthb-conalcurve} in Theorem \ref{mainth-b}, we deduce from \eqref{dp-coro-eqnperiod} that, for each $\omega\in \Omega$, $\mathscr{A}(\omega)$ is a smooth curve and satisfies
        \begin{align}\label{dp-coro-inC}
            \frac{\mathrm{d}}{\mathrm{d}\tau}\ell(\tau,\omega)\in\mathscr{C},~~\text{for}~~\tau\in\mathbb{R},
        \end{align} 
        where  $\mathscr{C}\subset\mathbb{R}_{+}^{N}$ is the closed convex cone as in \eqref{dp-propdp-eqnconeC}. So, for any two points $b_{1},b_{2}\in\mathscr{B}(\omega)$, one has  $b_{i}=\ell(u_{i},\omega)+z(\omega)$, for some $u_{i}\in\mathbb{R}$ and $i=1,2$. Without loss of generality, we assume that $u_{1}<u_{2}$. It then follows from \eqref{dp-coro-inC} that
        \begin{align*}
            b_{2}-b_{1}&=\ell(u_{2},\omega)-\ell(u_{1},\omega)\\
            &\left.=\int_{0}^{1}\frac{\mathrm{d}}{\mathrm{d}\tau}\ell(\tau,\omega)\right|_{\tau=\lambda u_{2}+(1-\lambda)u_{1}}(u_{2}-u_{1})\,\mathrm{d}\lambda\in\mathscr{C}\subset\mathbb{R}^{N}_{+}.
        \end{align*}
        Hence, we have proved that $\mathscr{B}$ is totally ordered with respect to the standard order ``$\le$" in $\mathbb{R}^{N}$, which completes the proof of Corollary \ref{dp-coro-do}.
    \end{proof}

    \section{Proof of Theorem \ref{mainth-b}}\label{section-pfmainth-b}
        We will prove Theorems \ref{mainth-a}-\ref{mainth-b} in the following two sections. In this section, we will prove Theorem \ref{mainth-b} under the assumption that Theorem \ref{mainth-a} holds. The proof of Theorem \ref{mainth-a} will be postponed in Section \ref{section-pfmainth-a}.
        \begin{proof}[Proof of \rm Theorem \ref{mainth-b}]
            Choose a closed and tempered
           random conal curve $l^{0}$ with respect to $\mathcal{C}_{M}$,
            given by $l^{0}\colon[s_{0},s_{1}]\times\Omega\to M$; $(s,\omega)\mapsto l^{0}(s,\omega)$ with $l^{0}(s_{0},\omega)=l^{0}(s_{1},\omega),$ satisfying 
            \begin{align*}
                0 \neq
                \frac{\mathrm{d}}{\mathrm{d}s}l^{0}(s,\omega)\in \mathcal{C}_{M}^{\omega}(l^{0}(s,\omega))
            \end{align*}for each $s\in[s_{0},s_{1}]$ and $\omega\in\Omega$. Let $\ell^{0}\colon \mathbb{R}\times\Omega\to\mathbb{R}^{N};\,(s,\omega)\mapsto \ell^{0}(s,\omega)$ be the unwrapping of $l^{0}$ from $M$ to $\mathbb{R}^{N}$. Then, for each $\omega\in\Omega$ and $s\in\mathbb{R}$, $\ell^{0}$ satisfies $0\neq\frac{\mathrm{d}}{\mathrm{d}s}\ell^{0}(s,\omega)\in\mathscr{C}$ by the definition of $\mathscr{C}$ (see \eqref{dp-propdp-eqnconeC}),  and
            \begin{align*}
			\ell^{0}\Big(s+(s_{1}-s_{0}),\omega\Big)=\ell^{0}(s,\omega)+2\pi\mathbf{e},
		\end{align*}
		since $l^{0}(s_{0},\omega)=l^{0}(s_{1},\omega)$. Noticing that $\mathscr{C}\setminus\{0\}\subset \operatorname{Int}\mathbb{R}^{N}_{+}$, one has 
        \begin{align}\label{eq-l' inintRN+}
           \frac{\mathrm{d}}{\mathrm{d}s}\ell^{0}(s,\theta_{-t}\omega)\in\operatorname{Int}\mathbb{R}^{N}_{+},~~\text{for~any}~  s\in\mathbb{R},\,t\geq0,\,\omega\in\Omega.
        \end{align}
    Given each $t\geq0$ and $\omega\in\Omega$, we define  
    \begin{align}\label{do-pfthb-Et}
        \mathcal{E}^{t}(\omega)=\left\{l^{t}(s,\omega)\triangleq\Phi\Big(t,l^{0}(s,\theta_{-t}\omega),\theta_{-t}\omega\Big)\;\middle| \;s\in[s_{0},s_{1}]\right\}\subset M
    \end{align}
    and let $\mathscr{E}^{t}(\omega)$ be the unwrapping of $\mathcal{E}^{t}(\omega)$ from $M$ to $\mathbb{R}^{N}$ as 
    \begin{align*}
      \mathscr{E}^{t}(\omega)=\left\{\ell^{t}(s,\omega)\triangleq\phi\Big(t,\ell^{0}(s,\theta_{-t}\omega),\theta_{-t}\omega\Big)\;\middle|\; s\in\mathbb{R}\right\}\subset\mathbb{R}^{N}.
    \end{align*}
    Clearly, one has $l^{t}(s_{0},\omega)=l^{t}(s_{1},\omega)$ and
    \begin{align}\label{do-pfthb-prdells}
           \ell^{t}\Big(s+(s_{1}-s_{0}),\omega\Big)=\ell^{t}(s,\omega)+2\pi\mathbf{e},\quad s\in\mathbb{R}.
    \end{align}

    Recall that $\Phi$-forward time evolution of random conal curve $l^{0}$ is also a random conal curve (see Proposition \ref{dp-prop-conalcurve}). Then, one has 
    \begin{align*}
        \frac{\mathrm{d}}{\mathrm{d}s}l^{t}(s,\omega)\in \mathcal{C}_{M}^{\omega}(l^{t}(s,\omega)),~~\text{for}~~ s\in[s_{0},s_{1}],
    \end{align*} 
    which entails that
    \begin{align}\label{do-pfthb-inC}
        \frac{\mathrm{d}}{\mathrm{d}s}\ell^{t}(s,\omega)\in\mathscr{C},~~\text{for}~~  s\in\mathbb{R}.
    \end{align} 
     Hence,  $\frac{\mathrm{d}}{\mathrm{d}s}\ell^{t}(s,\omega)\in\operatorname{Int}\mathbb{R}^{N}_{+}$, for $s\in\mathbb{R}$, a fact that follows from \eqref{eq-l' inintRN+} and the positive invariance of $\operatorname{Int}\mathbb{R}^{N}_{+}$ with respect to the linearized system $\delta\phi$ (see \eqref{hc-prophc-eqnzeta}) as shown in Lemma \ref{prly-lem-monotone}. Notably, $\frac{\mathrm{d}}{\mathrm{d}s}\ell_{1}^{t}(s,\omega)>0$ for $s\in\mathbb{R}$, where $\ell_{1}^{t}(s,\omega)$ is the first component of $\ell^{t}(s,\omega)$. Moreover, \eqref{do-pfthb-prdells} implies that the range of $\ell_{1}^{t}(\cdot,\omega)$ is $\mathbb{R}$. For simplicity, we write the map $\tau\colon\mathbb{R}\to\mathbb{R};s\mapsto\tau(s)\triangleq\ell_{1}^{t}(s,\omega)$. Then $\tau=\tau(s)$ is a bijective map. We further write $s=s(\tau)$ as its inverse. So, $\ell^{t}(s,\omega)$ can be re-parameterized as 
     \begin{equation}\label{do-pfthb-tildeell}
         {\fontsize{8.2}{9.6}\selectfont\begin{aligned}
            \tilde{\ell}^{t}(\tau,\omega)\triangleq\ell^{t}\Big(s(\tau),\omega\Big)=\Big(\ell_{1}^{t}(s,\omega),\ell_{2}^{t}(s,\omega),\ldots,\ell_{N}^{t}(s,\omega)\Big)^{\top}=\Big(\tau,\tilde{\ell}^{t}_{2}(\tau,\omega),\ldots,\tilde{\ell}^{t}_{N}(\tau,\omega)\Big)^{\top},
        \end{aligned}}
     \end{equation}
        for $\tau\in\mathbb{R}$. Furthermore, together with \eqref{do-pfthb-prdells}, we have
        \begin{align*}
            \tau+2\pi=\ell_{1}^{t}(s,\omega)+2\pi=\ell_{1}^{t}\Big(s+(s_{1}-s_{0}),\omega\Big)=\tau\Big(s+(s_{1}-s_{0})\Big),
        \end{align*}
        which entails that
        \begin{align}\label{do-pfthb-iota}
             s(\tau+2\pi)=s(\tau)+(s_{1}-s_{0}).
        \end{align}
    Consequently,         {\small\begin{align*}
            \tilde{\ell}^{t}(\tau+2\pi,\omega)\stackrel{\eqref{do-pfthb-tildeell}}{=}\ell^{t}\Big(s(\tau+2\pi),\omega\Big)&\stackrel{\eqref{do-pfthb-iota}}{=}\ell^{t}\Big(s(\tau)+(s_{1}-s_{0}),\omega\Big)\\
         &\stackrel{\eqref{do-pfthb-prdells}}{=}\ell^{t}\Big(s(\tau),\omega\Big)+2\pi\mathbf{e}\stackrel{\eqref{do-pfthb-tildeell}}{=}\tilde{\ell}^{t}(\tau,\omega)+2\pi\mathbf{e}
        \end{align*}}for any $\tau\in\mathbb{R}$, and one can define a map on $M$ as $\tilde{l}^{t}(\cdot,\omega)\colon[0,2\pi]\to M;\,\tau\mapsto\tilde{l}^{t}(\tau,\omega)$:
        \begin{align*}
            \tilde{l}^{t}(\tau,\omega)\,\triangleq\,\tilde{\ell}^{t}(\tau,\omega)\,\bmod\,2\pi\mathbf{e},\quad\tau\in[0,2\pi].
        \end{align*}
        Clearly, $\tilde{l}^{t}(0,\omega)=\tilde{l}^{t}(2\pi,\omega)$; and moreover, by virtue of the expression for $\mathcal{E}^{t}(\omega)$ in \eqref{do-pfthb-Et}, we obtain
        \begin{align}\label{do-pfthb-eqn1}
            \mathcal{E}^{t}(\omega)=\left\{\tilde{l}^{t}(\tau,\omega)\;\middle|\; \tau\in[0,2\pi]\right\}\subset M.
        \end{align}
        
        We now prove that for each $\omega\in\Omega$, the family of $M$-valued maps $\{\tilde{l}^{t}(\tau,\omega)\}_{t\geq0}$ 
        is equicontinuous in $\tau\in[0,2\pi]$. As a matter of fact, for $t\geq0$ and $\tau\in[0,2\pi]$, it follows from \eqref{do-pfthb-inC} that 
        \begin{align}\label{do-pfthb-tildeinC}
            \frac{\mathrm{d}}{\mathrm{d}\tau}\tilde{\ell}^{t}(\tau,\omega)\in\mathscr{C};
        \end{align}
        and hence, one has
        \begin{align}\label{do-pfthb-equi}
            \gamma_{1}\cdots\gamma_{i-1}\leq\frac{\mathrm{d}}{\mathrm{d}\tau}\tilde{\ell}_{i}^{t}(\tau,\omega)\leq\beta_{1}\cdots\beta_{i-1},~~\text{for}~~ i=2,\ldots,N.
        \end{align}
        Together with the fact that $d_{M}\Big(\tilde{l}^{t}(\tau_{2},\omega),\tilde{l}^{t}(\tau_{1},\omega)\Big)\leq\left\|\tilde{\ell}^{t}(\tau_{2},\omega)-\tilde{\ell}^{t}(\tau_{1},\omega)\right\|$ for $\tau_{1},\tau_{2}\allowbreak\in[0,2\pi]$, \eqref{do-pfthb-equi} entails that the family of $M$-valued maps $\{\tilde{l}^{t}(\tau,\omega)\}_{t\geq0}$ is equicontinuous in $\tau\in[0,2\pi]$. 
        
        We further assert that the family of $M$-valued maps $\{\tilde{l}^{t}(\tau,\omega)\}_{t\geq 0}$ is uniformly bounded on $\tau\in[0,2\pi]$. In fact, by virtue of Theorem \ref{mainth-a}, we have
		\begin{align}\label{ator-thm-eqnator}
			\operatorname{dist}\big(\mathcal{E}^{t}(\omega),\mathcal{A}(\omega)\big)\to0,~~\text{as}~~ t\to\infty.
	\end{align} 
        So, by \eqref{do-pfthb-eqn1}, there exists $T(\omega)>0$ such that for $t\geq T(\omega)$,
        \begin{align*}
            \tilde{l}^{t}(\tau,\omega)\in 
            \{x\in M\mid d_{M}\big(x,\mathcal{A}(\omega)\big)\leq1\},~~\text{for~any}~~\tau\in[0,2\pi].
        \end{align*} 
        Hence, we proved the assertion.

        Now, due to the equicontinuity and uniform boundedness of  $\{\tilde{l}^{t}(\tau,\omega)\}_{t\geq 0}$, by the Arzel\'{a}-Ascoli theorem, one can find a subsequence $\{t_{m}(\omega)\}_{m\in\mathbb{N}}$ with $t_{m}(\omega)\to\infty$ ($m\to\infty$), and a map $l\colon[0,2\pi]\times\Omega\to M$  satisfying $l(0,\omega)=l(2\pi,\omega)$, such that
	\begin{align}\label{do-pfthb-convergence}
          \sup_{\tau\in[0,2\pi]}d_M\Big(\tilde{l}^{t_{m}(\omega)}(\tau,\omega),l(\tau,\omega)\Big)\to0,~~\text{as}~~ m\to\infty.	
	\end{align}
    Define $\mathcal{G}(\omega)\triangleq\left\{l(\tau,\omega)\;\middle|\;\tau\in[0,2\pi]\right\}$. Clearly, for each $\omega\in\Omega$, $\mathcal{G}(\omega)$ is a closed curve which is homeomorphic to $\mathbb{S}^{1}$.

    Next, we will show that, for each $\omega\in\Omega$,  $\mathcal{A}(\omega)=\mathcal{G}(\omega)$ whenever $k$ is sufficiently large. In fact, by \eqref{do-pfthb-eqn1} and \eqref{do-pfthb-convergence}, we have
    \begin{align*}
		\operatorname{dist}\big(\mathcal{G}(\omega),\mathcal{E}^{t_{m}(\omega)}(\omega)\big)&\;\,=\sup_{\tau\in[0,2\pi]}d_{M}\big(l(\tau,\omega),\mathcal{E}^{t_{m}(\omega)}(\omega)\big)\\
        &\stackrel{\eqref{do-pfthb-eqn1}}{=}\sup_{\tau\in[0,2\pi]}\inf_{u\in[0,2\pi]}d_{M}\Big(l(\tau,\omega),\tilde{l}^{t_{m}(\omega)}(u,\omega)\Big)\\
        &\!\stackrel{\eqref{do-pfthb-convergence}}{\leq}\sup_{\tau\in[0,2\pi]}d_{M}\Big(l(\tau,\omega),\tilde{l}^{t_{m}(\omega)}(\tau,\omega)\Big)\to0,~~\text{as}~~m\to\infty.
    \end{align*}
    Then, together with \eqref{ator-thm-eqnator}, one has $\operatorname{dist}\big(\mathcal{G}(\omega),\mathcal{A}(\omega)\big)=0$; and hence, $\mathcal{G}(\omega)\subset\mathcal{A}(\omega)$ for any $\omega\in\Omega$. Suppose that $\mathcal{G}(\omega)\subsetneq\mathcal{A}(\omega)$ for some $\omega\in\Omega$. Then there exists $x(\omega)\in\mathcal{A}(\omega)$ such that $\mathcal{G}(\omega)\subset\mathcal{A}(\omega)\setminus\{x(\omega)\}$. Since $\mathcal{A}(\omega)$ is homeomorphic to $\mathbb{S}^{1}$ (see Theorem \ref{mainth-a}), it follows that $\mathcal{A}(\omega)\setminus\{x(\omega)\}$ is homeomorphic to $\mathbb{R}$. Recall that $\mathcal{G}(\omega)$ is homeomorphic to $\mathbb{S}^{1}$. This contradicts that $\mathcal{G}(\omega)\subset\mathcal{A}(\omega)\setminus\{x(\omega)\}$. Thus, we have proved $\mathcal{A}(\omega)=\mathcal{G}(\omega)$ for any $\omega\in\Omega$. 
    
    Finally, we show that $\mathcal{A}$ is a random conal curve. This can be done by \eqref{do-pfthb-convergence} and the fact that $\frac{\mathrm{d}}{\mathrm{d}\tau}\tilde{l}^{t_{m}(\omega)}(\tau,\omega)\in\mathcal{C}_{M}^{\omega}(\tilde{l}^{t_{m}(\omega)}(\tau,\omega))$, which follows from \eqref{do-pfthb-tildeinC} and the definition of $\mathcal{C}_{M}$. Hence, $\frac{\mathrm{d}}{\mathrm{d}\tau}l(\tau,\omega)\in\mathcal{C}_{M}^{\omega}(l(\tau,\omega))$, for any $\tau\in[0,2\pi]$. So, we have obtained that $\mathcal{A}(\omega)=\{l(\tau,\omega)\mid\tau\in[0,2\pi]\}$ is a closed random curve on $M$. Thus, we have completed the proof. 
    \end{proof}

	\begin{remark}\label{do-rmk-difference}
            In order to obtain the dynamical order mentioned in Corollary \ref{dp-coro-do}, Chow et al. (see \cite[p.1017]{ChowShenZhou07}) constructed a random family of so-called horizontal curves by solving the following inequalities:
            \begin{align}\label{hc-rmkhc-Shenidea1a}
                \frac{\mathrm{d}(\delta \phi)_{i+1}/\mathrm{d}t}{\mathrm{d}(\delta \phi)_{i}/\mathrm{d}t}< \beta_{i}~~\text{and}~~\frac{\mathrm{d}(\delta \phi)_{i}}{\mathrm{d}t} > 0,~~\text{whenever}~~ (\delta \phi)_{i+1}=\beta_{i}(\delta \phi)_{i},
            \end{align}
            and 
            \begin{align}\label{hc-rmkhc-Shenidea1b}
                \frac{\mathrm{d}(\delta \phi)_{i}/\mathrm{d}t}{\mathrm{d}(\delta x)_{i+1}/\mathrm{d}t} < \frac{1}{\gamma_{i}}~~\text{and}~~\frac{\mathrm{d}(\delta \phi)_{i+1}}{\mathrm{d}t}>0,~~\text{whenever}~~(\delta \phi)_{i+1}=\gamma_{i}(\delta \phi)_{i},
            \end{align} 
            with pairs $(\beta_{i},\gamma_{i})$ satisfying $0<\gamma_{i}<1$ and $\beta_{i}>1$, for $i=1,\ldots,N-1$. Here $\delta \phi$ is the variation of $\phi$ (the differential equation for $\phi$ is given in \eqref{prly-eqn-phi}), and $(\delta \phi)_{i}$ represents the $i$-th component of $\delta\phi$, for $i=1,\cdots,N-1$. Due to the interaction of the $i$-th oscillator with both its successor and predecessor, the second inequalities in both \eqref{hc-rmkhc-Shenidea1a} and  \eqref{hc-rmkhc-Shenidea1b} naturally hold for every pair $(\beta_{i},\gamma_{i})$.  Consequently, the first inequalities in both  \eqref{hc-rmkhc-Shenidea1a} and  \eqref{hc-rmkhc-Shenidea1b} 
           can be solved for certain appropriate  $(\beta_{i},\gamma_{i})$ with $0<\gamma_{i}<1$ and $\beta_{i}>1$, $i=1,\ldots,N-1$. Based on this, the construction of horizontal curves was accomplished  in their works.

            Unfortunately, the positivity of the second inequality in \eqref{hc-rmkhc-Shenidea1a} is \textit{undermined} when one encounters the unidirectional coupling depicted in Figure \ref{intro-figline-uni}, since there is no control from successors at all. To overcome such difficulties, we revisited \eqref{hc-rmkhc-Shenidea1a} and \eqref{hc-rmkhc-Shenidea1b} from the perspective of differential positivity, and \textit{found that horizontal curves can be identified as the conal curves on $M$}; and moreover, the construction of the conal curves amounts to finding some appropriate $(\beta_{i},\gamma_{i})$ such that the variational system of $\phi$ leaves the cone $\mathscr{C}$ invariant, where $\mathscr{C}$ is given in \eqref{dp-propdp-eqnconeC} as
            \begin{align*}
			\mathscr{C}=\{\delta x\in\mathbb{R}^{N}_{+}\mid\gamma_{i}{\delta x}_{i}\leq {\delta x}_{i+1}\leq\beta_{i}{\delta x}_{i},i=1,\ldots,N-1\}.
		\end{align*}
            This is achieved by the geometric cone criteria \eqref{dp-propdp-eqnbeta} and \eqref{dp-propdp-eqngamma}, which notably do not involve the estimates for the sign of variational system of $\phi$.  
        \end{remark}

    \section{Proof of Theorem \ref{mainth-a}}\label{section-pfmainth-a} 
    In this section, we focus on the proof of Theorem \ref{mainth-a}. Our approach is motivated by \cite{ChowShenZhou07}, however, the presence of unidirectional coupling makes the coupling matrix $L$ non-diagonalizable, thereby making the realization of the computational procedure described in \cite{ChowShenZhou07} more compliated, particularly for the constructions of global random attractor $\mathcal{A}$ of random system $\Phi$ (see Subsection \ref{subsection-pftha-1}), as well as those of the center-unstable manifold and its foliation for random system $\phi$ more involved (see Subsection \ref{subsection-pftha-2}).

    Moreover, in order to show that $\mathcal{A}(\omega)$ is a closed smooth curve for each $\omega\in \Omega$, we first establish that for coupling coefficient $k$ sufficiently large, the random system $\phi$ on $\mathbb{R}^{N}$ possesses a one-dimensional and $C^{1}$-smooth center-unstable manifold with a stable foliation (see Proposition \ref{mfld-prop-mfld} and Lemma \ref{do-lem-foliation}). Based on such a foliation structure, we prove that the unwrapping $\mathscr{A}$ of $\mathcal{A}$ from $M$ to $\mathbb{R}^{N}$ (see \eqref{dp-coro-eqnscrA}) actually coincides with the obtained center-unstable manifold in Proposition \ref{mfld-prop-mfld}, by which we accomplish obtaining that  $\mathcal{A}(\omega),\omega\in\Omega$, is a closed smooth curve.

	\subsection{Global Random Attractor \texorpdfstring{$\mathcal{A}$}{mathcal{A}} on \texorpdfstring{$M$}{M}}\label{subsection-pftha-1}
    We will prove in this subsection the existence of global random attractor $\mathcal{A}$ of random system $\Phi$ on $M$. Before proceeding this, we present some estimates for later use. Based on \eqref{prly-eqn-H} and Lemma \ref{pre-lem-hi}(ii), denote  
	\begin{equation*}
		C_{1}(\omega)=\underset{t\in\mathbb{R},1\leq i\leq N}{\sup}\frac{\vert H_{i}(\theta_{t}\omega)\vert}{1+\vert t\vert}<\infty,~~\text{for}~~\omega\in\Omega.
	\end{equation*}
	Then, for any $t\in\mathbb{R}$ and $\omega\in\Omega$, one has
	\begin{equation}\label{ator-pfprop1-eqnC1}
		C_{1}(\theta_{t}\omega)\leq C_{1}(\omega)(1+\vert t\vert)
	\end{equation}
and
\begin{equation}\label{ator-pfprop1-eqnH}
		\vert H_{i}(\theta_{t}\omega)\vert\leq C_{1}(\omega)(1+\vert t\vert),~~\text{for}~~ i=1,\ldots,N.
    \end{equation}
    Clearly, $C_{1}$ is $(\mathcal{F},\mathcal{B}(\mathbb{R}_{+}))$-measurable due to the $(\mathcal{F},\mathcal{B}(\mathbb{R}))$-measurability of $\omega\mapsto H_{i}(\theta_{t}\omega)$ for each $t\in\mathbb{R}$ and the continuity of  $t\mapsto H_{i}(\theta_{t}\omega)$ for each $\omega\in\Omega$ and $i=1,\ldots,N$.

    \begin{lemma}\label{ator-lem-exist}
		The random system $\Phi$ on $M$ possesses a global random attractor $\mathcal{A}\colon\Omega\to2^{M}$. 
    \end{lemma}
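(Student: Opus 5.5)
The plan is to produce a $\mathcal{D}$-absorbing random compact set $K$ for $\Phi$ on $M$ and then invoke Lemma \ref{prly-thm-ator}. The difficulty is that $M\cong\mathbb{S}^1\times\mathbb{R}^{N-1}$ is noncompact only in directions transversal to $\mathbf{e}$. I would therefore work in coordinates that are invariant under translation by $2\pi\mathbf{e}$, namely the differences
\begin{align*}
u_i=\phi_{i+1}-\phi_i,\qquad i=1,\ldots,N-1 .
\end{align*}
The map $\hat x\mapsto u=(u_1,\ldots,u_{N-1})$ is well defined on $M$. A subset of $M$ is relatively compact iff $u$ is bounded on it, since the $\phi_1 \bmod 2\pi$ direction is a circle. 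Moreover $d_M(\hat x,\hat y)\ge c\,|u(x)-u(y)|$ for some constant $c>0$, because $|u(x)-u(y)|$ is bounded by a constant times $\|x-y-n\cdot2\pi\mathbf{e}\|$ for every $n$. Hence any tempered $M$-valued set $D$, contained in a $d_M$-ball of tempered radius $r_D(\omega)$, has tempered $u$-bounds.

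Next I would derive the equation for $u$ from \eqref{prly-eqn-phi}. The first row of $L$ vanishes, so $\dot\phi_1=\alpha_1-\sin(\phi_1+z_1)+H_1$. For $i\ge2$, $\dot\phi_i=k(\phi_{i-1}-\phi_i)+\alpha_i-\sin(\cdot)+H_i$. Subtracting gives the lower–triangular linear system
\begin{align*}
\dot u_1=-k u_1+b_1(t),\qquad \dot u_i=-k u_i+k u_{i-1}+b_i(t),\quad i=2,\ldots,N-1,
\end{align*}
where $|b_i(t)|\le 2+|\alpha_i|+|\alpha_{i+1}|+|H_i(\theta_t\omega)|+|H_{i+1}(\theta_t\omega)|$. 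Along the pull-back trajectory starting from $\theta_{-t}\omega$, the forcing at time $s$ is evaluated at $\theta_{s-t}\omega$. By \eqref{ator-pfprop1-eqnH} it is bounded by $c_0+2C_1(\omega)(1+|t-s|)$.

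I would then estimate $u$ inductively in $i$ via variation of constants. For $u_1$ this gives
\begin{align*}
|u_1(t)|\le e^{-kt}|u_1(0)|+\int_0^t e^{-k(t-s)}\bigl(c_0+2C_1(\omega)(1+t-s)\bigr)\,ds\le e^{-kt}|u_1(0)|+\rho_1(\omega),
\end{align*}
with $\rho_1(\omega)$ a constant multiple of $1+C_1(\omega)$. Feeding this into the equation for $u_2$, and so on, produces terms of the form $t^je^{-kt}|u(0)|$ with $j\le N-2$, coming from the non-diagonalizable Jordan structure of the difference system, together with deterministic multiples of $1+C_1(\omega)$. Since $u(0)$ ranges over $D(\theta_{-t}\omega)$ and $r_D(\theta_{-t}\omega)$ grows subexponentially, the terms $t^je^{-kt}r_D(\theta_{-t}\omega)$ tend to $0$. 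Hence for $t\ge t_D(\omega)$ the pulled-back image lies in
\begin{align*}
K(\omega)=\{\hat x\in M : |u(x)|\le R(\omega)\},\qquad R(\omega)=1+c_N\bigl(1+C_1(\omega)\bigr).
\end{align*}
$K(\omega)$ is compact in $M$, being a circle times a ball. $R$ is tempered by \eqref{ator-pfprop1-eqnC1}, and $R$ is measurable because $C_1$ is, so $K$ is a random compact set in $\mathcal{D}$. Lemma \ref{prly-thm-ator} then yields the global random attractor $\mathcal{A}$, together with its measurability.

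The main obstacle I anticipate is the bookkeeping in the inductive step. The unidirectional coupling makes the difference system a Jordan-type block, so the bounds pick up polynomial-in-$t$ factors and an $H$-dependent forcing that grows linearly in $|t-s|$. I would handle both by the elementary bound $\int_0^\infty e^{-k\sigma}(1+\sigma)^{m}\,d\sigma<\infty$ and by temperedness. I would also check carefully that the map $u$ yields a proper identification of compact sets in $(M,d_M)$.
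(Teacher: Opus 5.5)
Your proposal is correct and follows essentially the paper's route. Both use variation of constants, exploiting the lower-triangular (Jordan-type) unidirectional coupling, to bound the differences $\phi_{i+1}-\phi_i$ by polynomial-times-$e^{-kt}$ terms in the initial data plus a tempered quantity built from $C_1(\omega)$ via \eqref{ator-pfprop1-eqnH}--\eqref{ator-pfprop1-eqnC1}; this gives an absorbing compact tube around the diagonal circle $\hat{L}_0$, and then Lemma \ref{prly-thm-ator} applies.

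The only difference is bookkeeping. Working directly in the $2\pi\mathbf{e}$-invariant coordinates $u$ lets you avoid the explicit formula \eqref{ator-eqn-ekLt} and the paper's final step of trading $\|\phi^0\|$ for $d_M(\Phi^0,\hat{0})$ through \eqref{prly-eqn-phiperiod}. When you feed $u_{i-1}(s)$ into the equation for $u_i$, make sure the intermediate-time bound carries the factor $(1+t-s)$ that comes from $C_1(\theta_{s-t}\omega)\le C_1(\omega)(1+t-s)$.
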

	
    \begin{proof}
        Based on estimates of each part of $\phi$ defined in system \eqref{prly-eqn-phi}, we first show that random system $\Phi$ possesses a $\mathcal{D}$-absorbing random compact set, where $\mathcal{D}$ is a family of $M$-valued tempered random sets. For this purpose, with any arbitrary $\omega\in\Omega$ fixed, let $D$ be an $\mathbb{R}^{N}$-valued random set such that $\hat{D}\triangleq D\,\bmod\,2\pi\mathbf{e}\in\mathcal{D}$ and $\phi^{0}=(\phi^{0}_{1},\ldots,\phi^{0}_{N})^{\top}\in D(\theta_{-t}\omega)$ for some $t\geq0$. The solution $\phi(t,\phi^{0},\theta_{-t}\omega)$ to random system \eqref{prly-eqn-phi} with initial data $\phi^{0}$ is denoted by $(\phi_{1},\ldots,\phi_{N})^{\top}$, where $\phi_{i}$ is the $i$-th component of $\phi(t,\phi^{0},\theta_{-t}\omega)$ for $i=1,\ldots,N$. By the variation of constants formula, one has
		\begin{equation}\label{ator-eqn-varconphi}
			\phi(t,\phi^{0},\theta_{-t}\omega)=e^{k Lt}\phi^{0}+\int_{0}^{t}e^{kL(t-s)}F(\phi(s,\phi^{0},\theta_{-t}\omega),\theta_{s-t}\omega)\,\mathrm{d}s,
		\end{equation}
    		where $e^{kLt}$ is a lower triangular matrix with its elements satisfying $(e^{kLt})_{1,1}=1$ and 
        \begin{align}\label{ator-eqn-ekLt}
            (e^{kLt})_{i,j}=\begin{cases}
                       1-e^{-kt}\sum_{m=0}^{i-2}\frac{(kt)^m}{m!},&2\leq i\leq N,j=1,\\[8pt]
                       e^{-kt}\frac{(kt)^{i-j}}{(i-j)!},&2\leq j\leq i\leq N,
                       \end{cases}
        \end{align}  
    and
         \begin{align}\label{ator-eqn-F}
		F(\phi(s,\phi^{0},\theta_{-t}\omega),\theta_{s-t}\omega)&=\left( \begin{array}{c}
			F_{1}(\phi(s,\phi^{0},\theta_{-t}\omega),\theta_{s-t}\omega)  \\
			\cdots \\
			 F_{N}(\phi(s,\phi^{0},\theta_{-t}\omega),\theta_{s-t}\omega) 
		\end{array} \right)
	\end{align}
        satisfying $F_{i}(\phi(s,\phi^{0},\theta_{-t}\omega),\theta_{s-t}\omega)\triangleq\alpha_{i} - \sin\left(\phi_{i} + z_{i}(\theta_{s-t}\omega)\right)+H_{i}(\theta_{s-t}\omega)$. Then it follows from \eqref{ator-eqn-varconphi}-\eqref{ator-eqn-F} that 
        \begin{equation}\label{ator-pflem1-minphi21}
        {\fontsize{9}{10.8}\selectfont\begin{aligned}
           |\phi_{2}-\phi_{1}|&\leq e^{-kt}|\phi_{1}^{0}|+e^{-kt}|\phi_{2}^{0}|+\int_{0}^{t}e^{-k(t-s)}\left|F_{1}(\phi(s,\phi^{0},\theta_{-t}\omega),\theta_{s-t}\omega)\right|\,\mathrm{d}s\\
            &\quad+\int_{0}^{t}e^{-k(t-s)}\left|F_{2}(\phi(s,\phi^{0},\theta_{-t}\omega),\theta_{s-t}\omega)\right|\,\mathrm{d}s
        \end{aligned}}
        \end{equation}and
        \begin{equation}\label{ator-pflem1-minphi}
            {\fontsize{8}{9.6}\selectfont\begin{aligned}
                |\phi_{i+1}-\phi_{i}|&\leq \frac{(kt)^{i-1}}{(i-1)!}e^{-kt}|\phi^{0}_{1}|+\sum_{j=1}^{i-1}\left[\frac{(kt)^{i-j}}{(i-j)!}+\frac{(kt)^{i-j-1}}{(i-j-1)!}\right]e^{-kt}|\phi^{0}_{j+1}|+e^{-kt}|\phi^{0}_{i+1}| \\
                &\quad+\int_{0}^{t}\frac{(k(t-s))^{i-1}}{(i-1)!}e^{-k(t-s)}\left|F_{1}(\phi(s,\phi^{0},\theta_{-t}\omega),\theta_{s-t}\omega)\right|\,\mathrm{d}s\\
                &\quad+\sum_{j=1}^{i-1}\int_{0}^{t}\frac{(k(t-s))^{i-j}}{(i-j)!}e^{-k(t-s)}\left|F_{j+1}(\phi(s,\phi^{0},\theta_{-t}\omega),\theta_{s-t}\omega)\right|\,\mathrm{d}s\\
                &\quad+\sum_{j=1}^{i-1}\int_{0}^{t}\frac{(k(t-s))^{i-j-1}}{(i-j-1)!}e^{-k(t-s)}\left|F_{j+1}(\phi(s,\phi^{0},\theta_{-t}\omega),\theta_{s-t}\omega)\right|\,\mathrm{d}s\\
                &\quad+\int_{0}^{t}e^{-k(t-s)}\left|F_{i+1}(\phi(s,\phi^{0},\theta_{-t}\omega),\theta_{s-t}\omega)\right|\,\mathrm{d}s
                \end{aligned}}
                \end{equation}for $i=2,\ldots,N-1$. Let $\alpha_{*}=1+\max_{1\leq i\leq N}|\alpha_{i}|$. Then, \eqref{ator-pfprop1-eqnH} implies 
        \begin{align}\label{ator-pflem1-Fj}
            \left|F_{j}(\phi(s,\phi^{0},\theta_{-t}\omega),\theta_{s-t}\omega)\right|\leq\alpha_{*}+C_{1}(\omega)(1+t-s)
        \end{align}
        for $j=1,\ldots,N,\,0\leq s\leq t.$ By combining with \eqref{ator-pflem1-minphi21}-\eqref{ator-pflem1-Fj}, we have
        \begin{align*}
            |\phi_{2}-\phi_{1}|\leq e^{-kt}|\phi_{1}^{0}|+e^{-kt}|\phi_{2}^{0}|+2\int_{0}^{t}e^{-k(t-s)}\big[\alpha_{*}+C_{1}(\omega)(1+t-s)\big]\,\mathrm{d}s
        \end{align*}
        and
        {\fontsize{8.2}{9.6}\selectfont\begin{align*}
            |\phi_{i+1}-\phi_{i}|&\leq\frac{(kt)^{i-1}}{(i-1)!}e^{-kt}|\phi^{0}_{1}|+\sum_{j=1}^{i-1}\left[\frac{(kt)^{i-j}}{(i-j)!}+\frac{(kt)^{i-j-1}}{(i-j-1)!}\right]e^{-kt}|\phi^{0}_{j+1}|+e^{-kt}|\phi^{0}_{i+1}|\\
            &\quad+2\sum_{m=0}^{i-1}\int_{0}^{t}\frac{(k(t-s))^{m}}{m!}e^{-k(t-s)}\big[\alpha_{*}+C_{1}(\omega)(1+t-s)\big]\,\mathrm{d}s\\
            &=\frac{(kt)^{i-1}}{(i-1)!}e^{-kt}|\phi^{0}_{1}|+\sum_{j=1}^{i-1}\left[\frac{(kt)^{i-j}}{(i-j)!}+\frac{(kt)^{i-j-1}}{(i-j-1)!}\right]e^{-kt}|\phi^{0}_{j+1}|+e^{-kt}|\phi^{0}_{i+1}|\\
            &\quad+2\sum_{m=0}^{i-1}\Bigg[\frac{\alpha_{*}+C_{1}(\omega)}{k}\bigg(1-e^{-kt} \sum_{u=0}^{m} \frac{(kt)^u}{u!} \bigg)+\frac{C_{1}(\omega)(m+1)}{k^2} \bigg(1-e^{-kt} \sum_{u=0}^{m+1} \frac{(kt)^u}{u!} \bigg)\Bigg]
        \end{align*}}for $i=2,\ldots,N-1$, where 
        {\fontsize{8}{9.6}\selectfont\begin{align*}
            &\int_{0}^{t}\frac{(k(t-s))^{m}}{m!}e^{-k(t-s)}[\alpha_{*}+C_{1}(\omega)(1+t-s)]\,\mathrm{d}s\\
            =&\frac{\alpha_{*}+C_{1}(\omega)}{k}\bigg(1-e^{-kt} \sum_{u=0}^{m} \frac{(kt)^u}{u!}\bigg)+\frac{C_{1}(\omega)(m+1)}{k^2} \bigg(1-e^{-kt} \sum_{u=0}^{m+1} \frac{(kt)^u}{u!} \bigg)
        \end{align*}}is used. Hence, for $i,j=1,\ldots,N$, one has
        \begin{align}\label{atorlem-eqn-polyest}
            |\phi_{j}-\phi_{i}|\leq p(t)e^{-kt}\left\|\phi^{0}\right\|+r(\omega),
        \end{align}
        where 
        \begin{align}\label{ator-lem-pt}
            p(t)=\sum_{m=0}^{N-2}2(N-m-1)\frac{(kt)^m}{m!},~~\text{for}~~t\geq0
        \end{align}
        and 
         \begin{align}\label{ator-lem-r}
            r(\omega)=2(N-1)\sum_{m=0}^{N-2}\left(\frac{\alpha_{*}+C_{1}(\omega)}{k}+\frac{C_{1}(\omega)(m+1)}{k^{2}}\right).
         \end{align}
         Moreover, by \eqref{ator-pfprop1-eqnC1}, one has $r(\theta_{t}\omega)\leq r(\omega)(1+|t|)$ for $t\in\mathbb{R}$, which entails the tempered-ness of $r$ in \eqref{ator-lem-r}. 
         
         Observe that $\phi(t,\phi^{0},\theta_{-t}\omega)\triangleq(\phi_{1},\ldots,\phi_{N})^{\top}\in\mathbb{R}^{N}$ can be orthogonally decomposed as
        \begin{align}\label{atorlem-eqn-decom}
            \left(\phi_{\rm ave},\ldots,\phi_{\rm ave}\right)^{\top}+\left(\phi_{1}-\phi_{\rm ave},\ldots,\phi_{N}-\phi_{\rm ave}\right)^{\top},
        \end{align}
        where $\phi_{\rm ave}\triangleq\frac{1}{N}\sum_{i=1}^{N}\phi_{i}$. Let $L_{0}=\{\lambda\cdot2\pi\mathbf{e}\mid\lambda\in\mathbb{R}\}\subset\mathbb{R}^{N}$ be the straight-line through $0\in\mathbb{R}^{N}$ along the direction $2\pi\mathbf{e}$. Then
        \begin{equation}\label{ator-lem-disest}
            \begin{aligned}
            d_{\mathbb{R}^{N}}\big(\phi(t,\phi^{0},\theta_{-t}\omega),L_{0}\big)
            &=\left\|\left(\phi_{1}-\phi_{\rm ave},\ldots,\phi_{N}-\phi_{\rm ave}\right)^{\top}\right\|\\
            &\leq\sqrt{N}\Big[p(t)e^{-kt}\left\|\phi^{0}\right\|+r(\omega)\Big],
        \end{aligned}
        \end{equation}where the equality and inequality follow from \eqref{atorlem-eqn-decom} and \eqref{atorlem-eqn-polyest}, respectively.

        Define $\hat{L}_{0}\triangleq L_{0}\,\bmod\,2\pi\mathbf{e}\subset M$, and $\hat{0}\triangleq0\,\bmod\,2\pi\mathbf{e}\in M$. Then for  $\Phi(t,\Phi^{0},\theta_{-t}\allowbreak\omega)=\phi(t,\phi^{0},\theta_{-t}\omega)\,\bmod\,2\pi\mathbf{e}\in M$ such that $\Phi^{0}\triangleq\phi^{0}\,\bmod\,2\pi\mathbf{e}\in \hat{D}(\theta_{-t}\omega)$, one has{\allowdisplaybreaks\small\begin{align*}
            d_{M}\big(\Phi(t,\Phi^{0},\theta_{-t}\omega),\hat{L}_{0}\big)
            &\stackrel{\eqref{prly-eqn-dM}}{=}\inf_{n\in\mathbb{Z}}\inf_{y\in L_{0}}\|\phi(t,\phi^{0},\theta_{-t}\omega)-y-n\cdot2\pi\mathbf{e}\|\\
            &\stackrel{\eqref{prly-eqn-phiperiod}}{=}\inf_{n\in\mathbb{Z}}d_{\mathbb{R}^{N}}\Big(\phi\big(t,\phi^{0}-n\cdot2\pi\mathbf{e},\theta_{-t}\omega\big),L_{0}\Big)\\
            &\!\stackrel{\eqref{ator-lem-disest}}{\leq}\inf_{n\in\mathbb{Z}}\sqrt{N}\Big[p(t)e^{-kt}\left\|\phi^{0}-n\cdot2\pi\mathbf{e}\right\|+r(\omega)\Big]\\
            &\stackrel{\eqref{prly-eqn-dM}}{=}\sqrt{N}p(t)e^{-kt}d_{M}(\Phi^{0},\hat{0})+\sqrt{N}r(\omega).
        \end{align*}}Now, the tempered-ness of $\hat{D}$ implies that  there exists $t_{\hat{D}}(\omega)>0$ such that
        \begin{align*}
            d_{M}\big(\Phi(t,\Phi^{0},\theta_{-t}\omega),\hat{L}_{0}\big)\leq 1+\sqrt{N}r(\omega),~~\text{for}~~ t\geq t_{\hat{D}}(\omega).
        \end{align*}
        Together with the tempered-ness of $r$ in \eqref{ator-lem-r}, we obtain that there exists an $M$-valued tempered random compact set $K$ such that
        \begin{align*}
            \Phi(t,\hat{D}(\theta_{-t}\omega),\theta_{-t}\omega)\subset K(\omega),~~\text{for}~~t\geq t_{\hat{D}}(\omega).
        \end{align*}
        This implies that $K$ is a $\mathcal{D}$-absorbing random compact taking values in $M$. Hence, by virtue of Lemma \ref{prly-thm-ator}, we obtain that random system $\Phi$ on $M$ possesses a global random attractor $\mathcal{A}\colon\Omega\to2^{M}$ with $\mathcal{A}\in\mathcal{D}$, which completes the proof.
    \end{proof}

    \subsection{Topological Structure of \texorpdfstring{$\mathcal{A}$}{mathcal{A}} on \texorpdfstring{$M$}{M}}\label{subsection-pftha-2}
   
    In this subsection, we focus on the topological structure of the global random attractor $\mathcal{A}$ on $M$. More precisely, we will show that, for each $\omega\in\Omega$, $\mathcal{A}(\omega)$ is a one-dimensional smooth closed curve. Our proof is divided into the following two steps: 
    \begin{itemize}
        \item[{\rm(i)}]To prove the existence of a one-dimensional center-unstable manifold $W^{cu}$ with respect to random system $\phi$ on $\mathbb{R}^{N}$ (see Proposition \ref{mfld-prop-mfld});
        \item[{\rm(ii)}]To construct the stable foliation of $W^{cu}$ (see Lemma \ref{do-lem-foliation}) and show that $W^{cu}$ coincides with  the unwrapping $\mathscr{A}$ of $\mathcal{A}$ from $M$ to $\mathbb{R}^{N}$ (see Proposition \ref{do-prop-mfld=ator}). So, the closedness of $\mathcal{A}$ follows directly from the one-dimensionality of $\mathscr{A}$ and the periodicity of random system $\phi$ defined by \eqref{prly-eqn-phi}. As a by-product, $\mathcal{A}$ is also a $C^{1}$-smooth curve on each fiber.
    \end{itemize}

    \begin{proposition}\label{mfld-prop-mfld}
        For the random system $\phi$ on $\mathbb{R}^{N}$, if the coupling coefficient $k$ is sufficiently large, then  there exists a one-dimensional center-unstable $C^{1}$-smooth manifold $W^{cu}$.
    \end{proposition}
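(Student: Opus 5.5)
We construct $W^{cu}$ by the Lyapunov--Perron method applied to a linear splitting adapted to the unidirectional coupling. The matrix $kL$ has eigenvalue $0$, with eigenvector $\mathbf{e}$ (since $L\mathbf{e}=0$), and eigenvalue $-k$ of algebraic multiplicity $N-1$. The latter sits in a single Jordan block, which is exactly the non-diagonalizability mentioned at the start of this section. We therefore split $\mathbb{R}^{N}=E^{c}\oplus E^{s}$ with $E^{c}=\operatorname{span}\{\mathbf{e}\}$ and $E^{s}=\{x\mid x_{1}=0\}$. Both subspaces are $L$-invariant, since the first row of $L$ vanishes. Let $P^{c},P^{s}$ be the corresponding projections, so that $P^{c}x=x_{1}\mathbf{e}$.

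From the explicit formula \eqref{ator-eqn-ekLt} we have $e^{kLt}P^{c}=P^{c}$. We also get the dichotomy estimate
\begin{align*}
\|e^{kLt}P^{s}\|\le p_{0}(t)e^{-kt}\le K_{\mu}e^{-(k-\mu)t},\quad t\ge0,
\end{align*}
where $p_{0}(t)$ is a polynomial of degree $N-2$ in $kt$, similar to $p(t)$ in \eqref{ator-lem-pt}, and $\mu>0$ is any small constant used to absorb the polynomial factor. The nonlinearity in \eqref{prly-eqn-phi} is $F(\phi,\theta_t\omega)=(\alpha_i-\sin(\phi_i+z_i(\theta_t\omega))+H_i(\theta_t\omega))_i$. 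Its $\phi$-Lipschitz constant is $\mathrm{Lip}\,F\le1$, uniformly in $\omega$ and $t$. The $H_i$ terms are independent of $\phi$, so the fact that they grow linearly in $|t|$ (see \eqref{ator-pfprop1-eqnH}) only affects the existence of a base solution, not the contraction.

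Fix an exponent $\eta\in(0,k)$, say $\eta=k/2$, and work in the weighted space
\begin{align*}
C_{\eta}^{-}=\Big\{\phi\in C((-\infty,0],\mathbb{R}^{N})\;\Big|\;\sup_{t\le0}e^{\eta t}\|\phi(t)\|<\infty\Big\}.
\end{align*}
Given $\xi\in E^{c}$, we look for solutions of the backward Lyapunov--Perron equation
\begin{align*}
\phi(t)=\xi+\int_{0}^{t}P^{c}F(\phi(s),\theta_{s}\omega)\,\mathrm{d}s+\int_{-\infty}^{t}e^{kL(t-s)}P^{s}F(\phi(s),\theta_{s}\omega)\,\mathrm{d}s,\quad t\le0.
\end{align*}
The forcing term is admissible: by Lemma \ref{pre-lem-hi}(ii), $|z_i(\theta_s\omega)|$ and $|H_i(\theta_s\omega)|$ grow sublinearly as $s\to-\infty$, so the integrals converge in $C^{-}_{\eta}$. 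For two candidate solutions, the difference of the right-hand sides is bounded in the weighted norm by
\begin{align*}
\Big(\frac{1}{\eta}+\frac{K_{\mu}}{k-\mu-\eta}\Big)\mathrm{Lip}\,F.
\end{align*}
With $\eta=k/2$, $\mu=k/4$ and $\mathrm{Lip}\,F\le1$, this is $O(K_\mu/k)$. I expect the main obstacle to be here: $K_{\mu}$ comes from $\sup_t (kt)^m e^{-\mu t}$ and must be controlled carefully so that it stays bounded in terms of $\mu/k$ alone, and not in terms of $k$. Since $(kt)^m e^{-kt/4}$ is bounded by a constant depending only on $m$, one gets $K_{\mu}\le c_{N}$ independent of $k$, so the contraction constant is at most $c_N'/k<1$ for $k$ large. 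This is where the hypothesis that $k$ is sufficiently large enters.

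The contraction mapping principle then gives a unique fixed point $\phi^{*}(\cdot,\xi,\omega)\in C_{\eta}^{-}$, and we set
\begin{align*}
h(\xi,\omega)=P^{s}\phi^{*}(0,\xi,\omega)=\int_{-\infty}^{0}e^{-kLs}P^{s}F(\phi^{*}(s),\theta_{s}\omega)\,\mathrm{d}s,\qquad W^{cu}(\omega)=\{\xi+h(\xi,\omega)\mid\xi\in E^{c}\}.
\end{align*}
The remaining properties are verified as follows.
\begin{itemize}
\item[{\rm(i)}] \emph{Lipschitz graph.} $h(\cdot,\omega)$ is Lipschitz with constant $O(1/k)$, by the same contraction estimate.
\item[{\rm(ii)}] \emph{Invariance.} $\phi(t,W^{cu}(\omega),\omega)=W^{cu}(\theta_t\omega)$ follows from uniqueness of the fixed point combined with the cocycle property.
\item[{\rm(iii)}] \emph{Measurability.} Measurability in $\omega$ follows from the fact that the fixed point is the limit of Picard iterates, each of which is measurable.
\item[{\rm(iv)}] \emph{$C^{1}$ smoothness.} We differentiate the fixed-point equation formally in $\xi$. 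The resulting linear equation for $D_{\xi}\phi^{*}$ has a unique solution in a space with weight $\eta'\in(\eta,2\eta)$ chosen so that the spectral gap condition still holds for large $k$. This is the standard fiber-contraction argument, and it shows that $h$ is $C^{1}$ with $D_\xi\phi^*$ continuous.
\item[{\rm(v)}] \emph{Consequences for later use.} The identity $\phi^*(\cdot,\xi+2\pi\mathbf{e},\omega)=\phi^*(\cdot,\xi,\omega)+2\pi\mathbf{e}$ follows from \eqref{prly-eqn-phiperiod} and uniqueness. Hence $h(\xi+2\pi\mathbf{e},\omega)=h(\xi,\omega)$ and $W^{cu}(\omega)$ is invariant under translation by $2\pi\mathbf{e}$. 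Finally, $\dim E^{c}=1$, so $W^{cu}(\omega)$ is a one-dimensional $C^{1}$ curve.
\end{itemize}
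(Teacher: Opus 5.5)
Your proposal is correct and follows essentially the same route as the paper. Both use the same splitting $E^{c}=\operatorname{span}\{\mathbf{e}\}$, $E^{s}=\{x\mid x_{1}=0\}$, the same Lyapunov--Perron operator on the weighted space of backward solutions with an $O(1/k)$ contraction constant, and the same graph map $h$. The paper integrates $p(-u)e^{(k-\gamma)u}$ exactly instead of absorbing the polynomial into $K_{\mu}$, and it cites the literature for $C^{1}$-smoothness rather than sketching it. One slip: in the Euclidean norm $\|P^{c}\|=\sqrt{N}$, so your center term should be $\sqrt{N}/\eta$ rather than $1/\eta$, which does not affect the conclusion.
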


    In order to prove Proposition \ref{mfld-prop-mfld}, we first collect some necessary prerequisites. It is easy to check that $F(\phi,\omega)$ (see \eqref{ator-eqn-F}) is Lipschitz continuous with respect to $\phi\in\mathbb{R}^{N}$ for each $\omega\in\Omega$. More precisely, for $x,y\in\mathbb{R}^{N}$, one has 
    \begin{align*}
        \|F(x,\omega)-F(y,\omega)\|\leq \|x-y\|,\quad \forall~\omega\in\Omega.
    \end{align*}
    Hence, its Lipschitz constant, denoted by $\operatorname{Lip}_{\phi}F$, satisfies $\operatorname{Lip}_{\phi}F\leq 1$. Moreover, it follows from  \eqref{ator-eqn-F} and \eqref{ator-pflem1-Fj} that 
	\begin{align}\label{ator-eqn-estimateF}
		\| F(\phi,\theta_{t}\omega)\|\leq \sqrt{N}\big[\alpha_{*}+C_{1}(\omega)(1+|t|)\big],\quad \forall~\phi\in\mathbb{R}^{N},\,t\in\mathbb{R},\,\omega\in\Omega.
	\end{align}

    To construct the center-unstable manifold $W^{cu}$, let us define the projections $\mathcal{P}^{c}$ and $\mathcal{P}^{s}\colon\mathbb{R}^{N}\to\mathbb{R}^{N}$ by
    \begin{align}\label{ator-eqn-PcPs}
        \mathcal{P}^{c} = \begin{pmatrix}
            1 & 0 & \cdots & 0 \\
            1 & 0 & \cdots & 0 \\
            \vdots & \vdots & \ddots & \vdots \\
            1 & 0 & \cdots & 0
        \end{pmatrix}~~\text{and}~~
        \mathcal{P}^{s}=\begin{pmatrix}
		0 &  & &  \\
		-1 & 1 & &  \\
            \vdots& &\ddots\\
            -1&  & & 1
	\end{pmatrix},
    \end{align}
        respectively. Obviously, $\mathcal{P}^{c}e^{kLt}=e^{kLt}\mathcal{P}^{c}$ and $\mathcal{P}^{s}e^{kLt}=e^{kLt}\mathcal{P}^{s}$ for $t\in\mathbb{R}$. Denote by $E^{c}=\mathcal{P}^{c}\mathbb{R}^{N}$ and $E^{s}=\mathcal{P}^{s}\mathbb{R}^{N}$ the images of  $\mathcal{P}^{c}$ and $\mathcal{P}^{s}$, respectively. So, $\mathbb{R}^{N}=E^{c}\oplus E^{s}$ and $E^{c}$ is one-dimensional. 
    \begin{lemma}For the projections $\mathcal{P}^{c}$ and $\mathcal{P}^{s}$, we have the following estimates:
		\begin{align}
			&\|e^{kLt}\mathcal{P}^{c}\|_{\mathscr{L}(\mathbb{R}^{N},\mathbb{R}^{N})}\leq\sqrt{N},  \quad t\leq0;\label{eq-P^c}\\
			&\|e^{kLt}\mathcal{P}^{s}\|_{\mathscr{L}(\mathbb{R}^{N},\mathbb{R}^{N})}\leq p(t)e^{-kt},\quad t\geq0,\label{eq-P^s}
		\end{align}
		where $\|\cdot\|_{\mathscr{L}(\mathbb{R}^{N},\mathbb{R}^{N})}$ denotes the operator norm  on $\mathscr{L}(\mathbb{R}^{N},\mathbb{R}^{N})$ and $p(t)$ is given in \eqref{ator-lem-pt}.    
    \end{lemma}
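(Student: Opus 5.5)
Both estimates come from the explicit lower-triangular formula \eqref{ator-eqn-ekLt} for $e^{kLt}$, together with the commutation relations $\mathcal{P}^{c}e^{kLt}=e^{kLt}\mathcal{P}^{c}$ and $\mathcal{P}^{s}e^{kLt}=e^{kLt}\mathcal{P}^{s}$.

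For \eqref{eq-P^c}, first note that $\mathbf{e}\in\ker L$, since every row of $L$ sums to zero. Hence $e^{kLt}\mathbf{e}=\mathbf{e}$ for every $t\in\mathbb{R}$, in particular for $t\le 0$. Since $\mathcal{P}^{c}x=x_{1}\mathbf{e}$, we get $e^{kLt}\mathcal{P}^{c}x=x_{1}\mathbf{e}$. Therefore
\[
\|e^{kLt}\mathcal{P}^{c}x\|=\sqrt{N}|x_{1}|\le\sqrt{N}\|x\|,
\]
and this holds uniformly in $t$. No use of \eqref{ator-eqn-ekLt} is needed here, which avoids dealing with its growth for negative $t$.

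For \eqref{eq-P^s}, I will compute $e^{kLt}\mathcal{P}^{s}$ directly. The matrix $\mathcal{P}^{s}=I-\mathcal{P}^{c}$ maps $x$ to $(0,x_{2}-x_{1},\ldots,x_{N}-x_{1})^{\top}$. Applying $e^{kLt}$, the first column of $e^{kLt}$ is multiplied by $0$. So only the entries $(e^{kLt})_{i,j}=e^{-kt}(kt)^{i-j}/(i-j)!$ with $2\le j\le i$ contribute:
\[
(e^{kLt}\mathcal{P}^{s}x)_{i}=e^{-kt}\sum_{j=2}^{i}\frac{(kt)^{i-j}}{(i-j)!}(x_{j}-x_{1}).
\]
The entries of this matrix, with respect to $x$, are $e^{-kt}(kt)^{i-j}/(i-j)!$ in position $(i,j)$ for $j\ge2$, and $-e^{-kt}\sum_{m=0}^{i-2}(kt)^{m}/m!$ in the first column.

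I will bound the operator norm by the sum of the absolute values of all entries, since $\|A\|_{\mathscr{L}}\le\|A\|_{F}\le\sum_{i,j}|a_{ij}|$. Then I group the entries by the power $m$ of $kt$.

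\begin{itemize}
\item The $j\ge2$ entries with $i-j=m$ occur $N-1-m$ times.
\item In the first column, the power $m$ appears in rows $i\ge m+2$, that is, $N-1-m$ times.
\end{itemize}

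Together, the coefficient of $e^{-kt}(kt)^{m}/m!$ is $2(N-m-1)$ for $m=0,\ldots,N-2$. This is exactly $p(t)$ from \eqref{ator-lem-pt}, so the bound is $p(t)e^{-kt}$ for $t\ge0$.

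The only delicate point is this bookkeeping of multiplicities, which must reproduce the constant $2(N-m-1)$ exactly. Cruder norm bounds, such as using row sums times $\sqrt{N}$, would introduce an extra factor. So I commit to the entrywise $\ell^{1}$ bound, which matches $p(t)$ precisely.
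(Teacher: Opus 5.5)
Your proof is correct and takes essentially the paper's route for the $\mathcal{P}^{s}$ estimate. The paper also bounds the operator norm by the Frobenius norm of the explicit matrix $e^{kLt}\mathcal{P}^{s}$, then relaxes it via $\sqrt{\sum_i b_i^{2}}\le\sum_i|b_i|$ and $\sqrt{N-m-1}\le N-m-1$ to exactly the entrywise sum your multiplicity count gives, namely $p(t)e^{-kt}$; for the $\mathcal{P}^{c}$ estimate, which the paper leaves as straightforward, your observation $L\mathbf{e}=0$, hence $e^{kLt}\mathcal{P}^{c}x=x_{1}\mathbf{e}$, is the needed one-line argument.
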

    \begin{proof}
		We only prove \eqref{eq-P^s}, as the proof of \eqref{eq-P^c} is straightforward. It follows from \eqref{ator-eqn-ekLt} and \eqref{ator-eqn-PcPs} that{\allowdisplaybreaks
        \fontsize{8}{9.6}\selectfont\begin{align*}
            \|e^{kLt}\mathcal{P}^{s}\|_{\mathscr{L}(\mathbb{R}^{N},\mathbb{R}^{N})}&\leq e^{-kt}\sqrt{\sum_{m=0}^{N-2}(N-m-1)\left[\frac{(kt)^{m}}{m!}\right]^{2}+\sum_{m=0}^{N-2}\bigg[\sum_{j=0}^{m}\frac{(kt)^{j}}{j!}\bigg]^{2}}\\
            &\leq e^{-kt}\sum_{m=0}^{N-2}\bigg(\sqrt{N-m-1}\frac{(kt)^{m}}{m!}+\sum_{j=0}^{m}\frac{(kt)^{j}}{j!}\bigg)\\
            &\leq e^{-kt}\sum_{m=0}^{N-2}\bigg((N-m-1)\frac{(kt)^{m}}{m!}+\sum_{j=0}^{m}\frac{(kt)^{j}}{j!}\bigg)\\
            &=e^{-kt}p(t)
        \end{align*}}for $t\geq 0$, which completes the proof.
    \end{proof}

    Before proving the existence of center-unstable manifold $W^{cu}$, we define  
    \begin{align*}
		\mathcal{C}_{\gamma}^{-}&=\left\{f\colon(-\infty,0]\to\mathbb{R}^{N}~\text{~is~continuous~and~}\sup_{t\leq0}\|e^{\gamma t}f(t)\|<\infty\right\}
    \end{align*}
    and
    \begin{align*}
        \mathcal{C}_{\gamma}^{+}&=\left\{f\colon[0,+\infty)\to\mathbb{R}^{N}\text{~is~continuous~and~}\sup_{t\geq0}\| e^{\gamma t}f(t)\|<\infty\right\}
    \end{align*}
    for $0<\gamma<k$. Both of them are Banach spaces equipped with the norms 
    \begin{align*}
		\|f\|_{\mathcal{C}_{\gamma}^{-}}=\sup_{t\leq0}\|e^{\gamma t}f(t)\|~~\text{and}~~ \|f\|_{\mathcal{C}_{\gamma}^{+}}=\sup_{t\geq0}\|e^{\gamma t}f(t)\|,
    \end{align*}
    respectively. Denote by $W^{cu}(\omega)$ the set of initial data for system \eqref{prly-eqn-phi} whose solution is controlled by $e^{-\gamma t}$ with $t\leq 0$, that is,
    \begin{align*}
		W^{cu}(\omega)=\left\{\phi_{0}\in\mathbb{R}^{N}\mid\phi(\cdot,\phi_{0},\omega)\in \mathcal{C}_{\gamma}^{-}\right\},\quad \omega\in\Omega.
    \end{align*}

    We then present an equivalent characterization of $W^{cu}(\omega)$ for $\omega\in\Omega$ (see also \cite{DLB2004} for more details).
    \begin{lemma}\label{mfld-lem-2}
		$\phi_{0}\in W^{cu}(\omega)$ if and only if there exists $\bar{\phi}\colon(-\infty,0]\to\mathbb{R}^{N}$ satisfying $\bar{\phi}(0)=\phi_{0}$, $\bar{\phi}\in \mathcal{C}_{\gamma}^{-}$ and
        \begin{equation}\label{mfld-lem1-eqn1}
            \begin{cases}
                \begin{aligned}
				\mathcal{P}^{c}\bar{\phi}(t)=&e^{kLt}\xi+\int_{0}^{t}e^{kL(t-u)}\mathcal{P}^{c}F\left(\bar{\phi}(u),\theta_{u}\omega\right)\,\mathrm{d}u,\\
				\mathcal{P}^{s}\bar{\phi}(t)=&\int_{-\infty}^{t}e^{kL(t-u)}\mathcal{P}^{s}F\left(\bar{\phi}(u),\theta_{u}\omega\right)\,\mathrm{d}u
			\end{aligned}
            \end{cases}
		\end{equation}
		for $t\leq0$, where $\xi=\mathcal{P}^{c}\phi_{0}$.
    \end{lemma}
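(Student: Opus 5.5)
We prove both directions with the variation of constants formula \eqref{ator-eqn-varconphi}, applied on a backward interval $[t,0]$, together with the commutation relations $\mathcal{P}^{c}e^{kLt}=e^{kLt}\mathcal{P}^{c}$ and $\mathcal{P}^{s}e^{kLt}=e^{kLt}\mathcal{P}^{s}$. For the necessity, let $\phi_{0}\in W^{cu}(\omega)$ and set $\bar\phi(t)=\phi(t,\phi_{0},\omega)$ for $t\le0$. Then $\bar\phi\in\mathcal{C}_{\gamma}^{-}$ and $\bar\phi(0)=\phi_0$. For any $\tau\le t\le 0$ the cocycle property and \eqref{ator-eqn-varconphi} give
\begin{align*}
\bar\phi(t)=e^{kL(t-\tau)}\bar\phi(\tau)+\int_{\tau}^{t}e^{kL(t-u)}F(\bar\phi(u),\theta_u\omega)\,\mathrm{d}u .
\end{align*}

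For the centre equation, take $\tau=0$ and apply $\mathcal{P}^{c}$. This gives the first line of \eqref{mfld-lem1-eqn1} with $\xi=\mathcal{P}^{c}\phi_0$, using $e^{kLt}\mathcal{P}^{c}\phi_0=e^{kLt}\xi$.

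For the stable equation, apply $\mathcal{P}^{s}$ and let $\tau\to-\infty$. By \eqref{eq-P^s},
\begin{align*}
\|e^{kL(t-\tau)}\mathcal{P}^{s}\bar\phi(\tau)\|\le p(t-\tau)e^{-k(t-\tau)}e^{-\gamma\tau}\|\bar\phi\|_{\mathcal{C}_\gamma^-}.
\end{align*}
The right-hand side equals $e^{-\gamma t}\,p(t-\tau)e^{-(k-\gamma)(t-\tau)}\|\bar\phi\|_{\mathcal{C}_\gamma^-}$. Since $p$ is a polynomial and $0<\gamma<k$, this tends to $0$ as $\tau\to-\infty$.

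The integral $\int_{-\infty}^{t}e^{kL(t-u)}\mathcal{P}^{s}F\,\mathrm{d}u$ converges absolutely. Indeed, by \eqref{ator-eqn-estimateF} its integrand is bounded by $p(t-u)e^{-k(t-u)}\sqrt N[\alpha_*+C_1(\omega)(1+|u|)]$, which is integrable on $(-\infty,t]$ because of the exponential factor. Passing to the limit yields the second line of \eqref{mfld-lem1-eqn1}.

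For the sufficiency, suppose $\bar\phi\in\mathcal{C}_\gamma^-$ satisfies \eqref{mfld-lem1-eqn1} with $\bar\phi(0)=\phi_0$. Add the two lines, using $\mathcal{P}^{c}+\mathcal{P}^{s}=I$. Then split $\int_{-\infty}^{t}=\int_{-\infty}^{0}+\int_{0}^{t}$ in the stable part, and note that $e^{kLt}\int_{-\infty}^{0}e^{-kLu}\mathcal{P}^{s}F\,\mathrm{d}u=e^{kLt}\mathcal{P}^{s}\bar\phi(0)$ by the second line at $t=0$. This gives
\begin{align*}
\bar\phi(t)=e^{kLt}(\xi+\mathcal{P}^{s}\phi_0)+\int_{0}^{t}e^{kL(t-u)}F(\bar\phi(u),\theta_u\omega)\,\mathrm{d}u=e^{kLt}\phi_0+\int_{0}^{t}e^{kL(t-u)}F(\bar\phi(u),\theta_u\omega)\,\mathrm{d}u .
\end{align*}
So $\bar\phi$ is a mild, hence classical, solution of \eqref{prly-eqn-phi} on $(-\infty,0]$ with $\bar\phi(0)=\phi_0$. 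Solutions are unique because $F$ is Lipschitz with $\operatorname{Lip}_\phi F\le1$, so $\bar\phi(t)=\phi(t,\phi_0,\omega)$ for $t\le0$. Since $\bar\phi\in\mathcal{C}_\gamma^-$, this means $\phi_0\in W^{cu}(\omega)$.

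The only delicate point is the limit $\tau\to-\infty$ in the necessity part. Because $L$ is not diagonalizable, the stable semigroup $e^{kLt}\mathcal{P}^{s}$ carries the polynomial factor $p(t)$, and this factor must be absorbed by the exponential gap $k-\gamma>0$. The same gap also handles the at most linearly growing forcing term from \eqref{ator-eqn-estimateF}.
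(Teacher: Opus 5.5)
Your proof is correct and follows the paper's argument. You use variation of constants on $[\tau,t]$, read off the centre component at $\tau=0$, and obtain the stable component by letting $\tau\to-\infty$ via \eqref{eq-P^s} and the gap $k-\gamma>0$; your sufficiency argument (summing the two lines using $\mathcal{P}^{c}+\mathcal{P}^{s}=I$ to recover the mild formulation, then invoking uniqueness) correctly fills in what the paper dismisses as ``a direct computation''. The only cosmetic slip is that you state the identity for $\tau\le t\le 0$ and then use it with $\tau=0\ge t$. This is harmless, since the identity holds for either ordering of $\tau$ and $t$, and the paper itself writes the centre equation separately for $\tau\ge t$.
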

    \begin{proof}
		We first let $\phi_{0}\in W^{cu}(\omega)$ and prove the existence of $\bar{\phi}$ satisfying \eqref{mfld-lem1-eqn1}. By using the variation of constants formula, the solution $\phi(t,\phi_0,\omega)$ of system \eqref{prly-eqn-phi} with initial data $\phi_{0}$ can be written as 
		\begin{equation}\label{mfld-pflem1-eqn1}
			\begin{aligned}
				\mathcal{P}^{c}\phi(t,\phi_{0},\omega)&=e^{kL(t-\tau)}\mathcal{P}^{c}\phi(\tau,\phi_{0},\omega)\\
				&\quad+\int_{\tau}^{t}e^{kL(t-u)}\mathcal{P}^{c}F(\phi(u,\phi_{0},\omega),\theta_{u}\omega)\,\mathrm{d}u,~~\text{for}~~\tau\geq t,
			\end{aligned}
		\end{equation}
		and
		\begin{equation}\label{mfld-pflem1-eqn2}
			\begin{aligned}
				\mathcal{P}^{s}\phi(t,\phi_{0},\omega)&=e^{kL(t-\tau)}\mathcal{P}^{s}\phi(\tau,\phi_{0},\omega)\\
				&\quad+\int_{\tau}^{t}e^{kL(t-u)}\mathcal{P}^{s}F(\phi(u,\phi_{0},\omega),\theta_{u}\omega)\,\mathrm{d}u,~~\text{for}~~ \tau\leq t.
			\end{aligned}
		\end{equation}
		Since
        \begin{align*}
            \left\| e^{k L  (t-\tau)}\mathcal{P}^{s}\phi(\tau,\phi_{0},\omega)\right\|&\leq p(t-\tau)e^{-k(t-\tau)}\|\phi(\tau,\phi_{0},\omega)\|\\
            &\leq e^{-k t}e^{(k-\gamma)\tau}p(t-\tau)\Vert\phi\Vert_{\mathcal{C}_{\gamma}^{-}}\to0~~(\text{as}~~\tau\to-\infty),
        \end{align*}
		\eqref{mfld-pflem1-eqn2} reduces to
		\begin{equation}\label{mfld-pflem1-eqnstable}
			\mathcal{P}^{s}\phi(t,\phi_{0},\omega)=\int_{-\infty}^{t}e^{kL(t-u)}\mathcal{P}^{s}F(\phi(u,\phi_{0},\omega),\theta_{u}\omega)\,\mathrm{d}u.
		\end{equation}
		Let $\bar{\phi}(t)=\phi(t,\phi_{0},\omega)$. By setting $\tau=0$ in \eqref{mfld-pflem1-eqn1} and using \eqref{mfld-pflem1-eqnstable}, we obtain that $\bar{\phi}$ satisfies \eqref{mfld-lem1-eqn1}, $\bar{\phi}(0)=\phi_0$, and $\bar{\phi}\in \mathcal{C}_{\gamma}^{-}$.
	
		Conversely, a direct computation shows that if $\bar{\phi}\in\mathcal{C}_{\gamma}^{-}$ satisfies \eqref{mfld-lem1-eqn1} and $\bar{\phi}(0)=\phi_{0}$, then $\phi_{0}\in W^{cu}(\omega)$. Thus, we have completed the proof.
	\end{proof}
	
    Now, we are ready to prove  Proposition \ref{mfld-prop-mfld} that  $W^{cu}(\omega)\neq \emptyset$ for $\omega\in\Omega$, is a one-dimensional and $C^{1}$-smooth manifold for coupling coefficient $k$ sufficiently large.
    \begin{proof}[Proof of \rm Proposition \ref{mfld-prop-mfld}]
		We first prove the existence and one-dimensionality for $W^{cu}$. It follows from Lemma \ref{mfld-lem-2} that   $W^{cu}(\omega)\neq \emptyset$ is equivalent to the existence of the solution of \eqref{mfld-lem1-eqn1} for $\omega\in\Omega$.

		For $\xi\in E^{c},\omega\in\Omega$, define the Lyapunov-Perron operator $\mathcal{L}_{\xi}\colon C^{-}_{\gamma }\to C^{-}_{\gamma};\bar{\phi}\mapsto\mathcal{L}_{\xi}\Big(\bar{\phi}\Big)$ by
		\begin{equation}\label{ator-eqn-PcLxi}
		  \begin{aligned}				
                \mathcal{P}^{c}\mathcal{L}_{\xi}\Big(\bar{\phi}\Big)(t)=e^{kLt}\xi + \int_{0}^t e^{kL(t-u)}\mathcal{P}^{c} F\Big(\bar{\phi}(u),\theta_u \omega\Big)\,\mathrm{d}u,~~\text{for}~~t\leq0,
		  \end{aligned}
		\end{equation}
        and
        \begin{equation}\label{ator-eqn-PsLxi}
		  \begin{aligned}	
                \mathcal{P}^{s}\mathcal{L}_{\xi}\Big(\bar{\phi}\Big)(t)&=\int_{-\infty}^t e^{kL(t-u)} \mathcal{P}^{s} F\Big(\bar{\phi}(u),\theta_u\omega\Big)\, \mathrm{d}u,~~\text{for}~~ t\leq0.
		  \end{aligned}
		\end{equation}	
        It is not hard to see from \eqref{ator-eqn-estimateF} and \eqref{eq-P^c}-\eqref{eq-P^s} that the operator $\mathcal{L}_{\xi}\colon\mathcal{C}^{-}_{\gamma}\to \mathcal{C}^{-}_{\gamma}$ is well defined.

		Furthermore, for any $\bar{\phi},\mathring{\phi}\in\mathcal{C}_{\gamma}^{-}$, a direct computation combining the Lipschitz continuity of $F(\cdot,\omega)$ with \eqref{eq-P^c}-\eqref{eq-P^s} yields that
        \begin{align}\label{mfld-pfprop-eqncenter}
			\left\|\mathcal{P}^{c}\mathcal{L}_{\xi}\Big(\bar{\phi}\Big)-\mathcal{P}^{c} \mathcal{L}_{\xi}\Big(\mathring{\phi}\Big)\right\|_{\mathcal{C}^{-}_{\gamma}}\leq\frac{\sqrt{N}}{\gamma}\cdot\operatorname{Lip}_{\phi}F\cdot\Big\|\bar{\phi}-\mathring{\phi}\Big\|_{\mathcal{C}^{-}_{\gamma}}
		\end{align}and
		{\allowdisplaybreaks\begin{equation}\label{mfld-pfprop-eqnstable}
	       \begin{aligned}
	           \left\|\mathcal{P}^{s}\mathcal{L}_{\xi}\Big(\bar{\phi}\Big)-\mathcal{P}^{s} \mathcal{L}_{\xi}\Big(\mathring{\phi}\Big)\right\|_{\mathcal{C}^{-}_{\gamma}}&\leq\operatorname{Lip}_{\phi}F\cdot\Big\|\bar{\phi}-\mathring{\phi}\Big\|_{\mathcal{C}^{-}_{\gamma}}\cdot\int_{-\infty}^{0}p(-u)e^{(k-\gamma)u}\,\mathrm{d}u\\
				&\leq\sum\limits_{m=1}^{N-1}\frac{2k^{m-1}(N-m)}{(k-\gamma)^{m}}\cdot\operatorname{Lip}_{\phi}F\cdot\Big\|\bar{\phi}-\mathring{\phi}\Big\|_{\mathcal{C}^{-}_{\gamma}},
	       \end{aligned}
		\end{equation}}where $p(u)=\sum_{m=0}^{N-2}2(N-m-1)\frac{(ku)^m}{m!}$ and we have utilized the fact that 
		\begin{align*}
			\int_{-\infty}^{0}(-u)^{m}e^{(k-\gamma)u}\,\mathrm{d}u=\frac{m!}{(k-\gamma)^{m+1}},\quad m\geq0
		\end{align*}
        in the last inequality of \eqref{mfld-pfprop-eqnstable}. Together with \eqref{mfld-pfprop-eqncenter}-\eqref{mfld-pfprop-eqnstable}, we obtain
		{\allowdisplaybreaks\begin{align*}
			\left\|\mathcal{L}_{\xi}\Big(\bar{\phi}\Big)- \mathcal{L}_{\xi}\Big(\mathring{\phi}\Big)\right\|_{\mathcal{C}^{-}_{\gamma}}
			\leq\left[\frac{\sqrt{N}}{\gamma}+\sum\limits_{m=1}^{N-1}\frac{2k^{m-1}(N-m)}{(k-\gamma)^{m}}\right]\cdot\operatorname{Lip}_{\phi}F\cdot\Big\|\bar{\phi}-\mathring{\phi}\Big\|_{\mathcal{C}^{-}_{\gamma}}.
		\end{align*}}Let $k_{0}$ be such that, for some $0<\gamma<k_{0}$,
        \begin{align}\label{eq-gap}
			\left[\frac{\sqrt{N}}{\gamma}+\sum\limits_{m=1}^{N-1}\frac{2k^{m-1}_0(N-m)}{(k_0-\gamma)^{m}}\right]\cdot\operatorname{Lip}_{\phi}F <1.
		\end{align}
        Then the Lyapunov-Perron operator $\mathcal{L}_{\xi}\colon\mathcal{C}^{-}_{\gamma}\to \mathcal{C}^{-}_{\gamma}$ is a contraction. Consequently, by using the uniform contraction principle, one obtains that for each $\xi\in E^{c}$ and $\omega\in\Omega$, the operator $\mathcal{L}_{\xi}\colon\mathcal{C}^{-}_{\gamma}\to \mathcal{C}^{-}_{\gamma}$ has a unique fixed point in $\mathcal{C}^{-}_{\gamma}$, denoted by  $\phi^{-}(\cdot,\xi,\omega)$. Hence, Lemma \ref{mfld-lem-2} implies that
		\begin{equation}\label{ator-eqn-Wcu}
			W^{cu}(\omega)=\{\xi+h(\xi,\omega)\mid\xi\in E^{c}\}.
		\end{equation}
		Here $h(\xi,\omega)=\int_{-\infty}^{0}e^{-k L  u}\mathcal{P}^{s}F\Big(\phi^{-}(u,\xi,\omega),\theta_{u}\omega\Big)\,\mathrm{d}u$ is Lipschitz continuous with respect to $\xi$,  for each $\omega\in\Omega$, due to \eqref{eq-gap}. For the mapping $h$, it follows from \eqref{ator-eqn-estimateF} and \eqref{eq-P^s} that, for any $\xi\in E^{c}$ and $\omega\in\Omega$,
        \begin{equation}\label{mfld-pfprop1-eqnh1}
            \begin{aligned}
                \| h(\xi,\omega)\|\leq C_{2}(\omega)
                \triangleq\int_{-\infty}^{0}p(-u)e^{ku}\cdot\sqrt{N}\big[\alpha_{*}+C_{1}(\omega)(1+|u|)\big]\,\mathrm{d}u.
            \end{aligned}
		\end{equation}
		Combining \eqref{mfld-pfprop1-eqnh1} with \eqref{ator-pfprop1-eqnC1}, we further obtain
		\begin{equation}\label{mfld-pfprop1-eqnh2}
		  \vert C_{2}(\theta_{t}\omega)\vert\leq C_{2}(\omega)(1+\vert t\vert),\quad t\in\mathbb{R}.
		\end{equation}
        Hence, $W^{cu}\,\bmod\,2\pi\mathbf{e}$ is an $M$-valued tempered random set. Thus, we have obtained the existence of random center-unstable manifold $W^{cu}$ which is one-dimensional since $E^{c}$ is one-dimensional. The $C^{1}$-smoothness of $W^{cu}$ for $k\geq k_{0}$ is standard, we omit it here and refer the reader to \cite{DLB2004,ShenLuZhang20}.
    \end{proof}
	
    Next, we will show that unwrapping $\mathscr{A}$ of $\mathcal{A}$ from $M$ to $\mathbb{R}^{N}$ (see \eqref{dp-coro-eqnscrA}) satisfies $\mathscr{A}=W^{cu}$, when the coupling coefficient $k$ is sufficiently large.

    \begin{proposition}\label{do-prop-mfld=ator}
		Let $k_{0}>0$ be given by \eqref{eq-gap}. Then $\mathscr{A}=W^{cu}$ whenever $k\geq k_{0}$.
    \end{proposition}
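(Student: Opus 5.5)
We prove the two inclusions $W^{cu}(\omega)\subset\mathscr{A}(\omega)$ and $\mathscr{A}(\omega)\subset W^{cu}(\omega)$ separately, for each fixed $\omega\in\Omega$.

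\emph{Step 1: $W^{cu}(\omega)\subset\mathscr{A}(\omega)$.} First, $W^{cu}$ is invariant under $\phi$, i.e. $\phi(t,W^{cu}(\omega),\omega)=W^{cu}(\theta_t\omega)$. This follows from the characterization in Lemma \ref{mfld-lem-2} and the cocycle property, since backward $\mathcal{C}_\gamma^-$-growth is preserved under time shifts. Second, $W^{cu}$ is $2\pi\mathbf{e}$-periodic. If $\phi_0\in W^{cu}(\omega)$, then by \eqref{prly-eqn-phiperiod} the trajectory of $\phi_0+2\pi\mathbf{e}$ differs from that of $\phi_0$ by a constant, so it still lies in $\mathcal{C}_\gamma^-$. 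Consequently $\hat W(\omega):=W^{cu}(\omega)\bmod 2\pi\mathbf{e}$ is a $\Phi$-invariant set in $M$. By \eqref{ator-eqn-Wcu} and \eqref{mfld-pfprop1-eqnh1}, $\hat W(\omega)$ is compact: it is the image of one period of $\xi$ with $\|h\|\le C_2(\omega)$. By \eqref{mfld-pfprop1-eqnh2} it is tempered. An invariant tempered compact set is attracted by $\mathcal{A}$, so
\[
\operatorname{dist}\big(\hat W(\omega),\mathcal{A}(\omega)\big)=\operatorname{dist}\big(\Phi(t,\hat W(\theta_{-t}\omega),\theta_{-t}\omega),\mathcal{A}(\omega)\big)\to0 .
\]
Since $\mathcal{A}(\omega)$ is closed, $\hat W(\omega)\subset\mathcal{A}(\omega)$, and unwrapping gives $W^{cu}(\omega)\subset\mathscr{A}(\omega)$.

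\emph{Step 2: $\mathscr{A}(\omega)\subset W^{cu}(\omega)$.} This is the main obstacle, and we handle it with a stable foliation (the forthcoming Lemma \ref{do-lem-foliation}). For each $x\in\mathbb{R}^N$ we construct, via a Lyapunov--Perron operator on $\mathcal{C}^+_\gamma$ built from the same splitting, the unique point $\pi(x,\omega)\in W^{cu}(\omega)$ satisfying
\[
\|\phi(t,x,\omega)-\phi(t,\pi(x,\omega),\omega)\|\le K e^{-\gamma' t}\|x-\pi(x,\omega)\| ,\qquad t\ge0,
\]
for some constants $K>0$ and $0<\gamma'<k$. The same gap condition \eqref{eq-gap} makes this operator a contraction.

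Now take $a\in\mathscr{A}(\omega)$. By invariance of $\mathcal{A}$ there are points $a_t\in\mathscr{A}(\theta_{-t}\omega)$ with $\phi(t,a_t,\theta_{-t}\omega)=a$, and these may be chosen in a fundamental periodic window. Since $\mathcal{A}$ is tempered, $\|a_t-\pi(a_t,\theta_{-t}\omega)\|$ grows at most subexponentially in $t$. The foliation estimate then gives
\[
\big\|a-\phi\big(t,\pi(a_t,\theta_{-t}\omega),\theta_{-t}\omega\big)\big\|\le K e^{-\gamma' t}\cdot(\text{subexponential})\to0 .
\]
The point $\phi(t,\pi(a_t,\theta_{-t}\omega),\theta_{-t}\omega)$ lies in $W^{cu}(\omega)$ by invariance. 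Since $W^{cu}(\omega)$ is a closed graph over $E^c$, we conclude $a\in W^{cu}(\omega)$.

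The delicate points in this step are:
\begin{itemize}
\item establishing the foliation bound with a uniform constant despite the non-diagonalizable $L$, which is handled by the polynomial factor $p(t)$ in \eqref{eq-P^s};
\item controlling the tempered growth of the distance from $a_t$ to its projection, which uses the bound $C_2(\theta_{-t}\omega)\le C_2(\omega)(1+t)$.
\end{itemize}
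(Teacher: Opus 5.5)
Your proposal is correct and follows essentially the same route as the paper. You get $W^{cu}\subset\mathscr{A}$ from invariance and temperedness of $W^{cu}\bmod 2\pi\mathbf{e}$ together with attraction by $\mathcal{A}$. You get $\mathscr{A}\subset W^{cu}$ from the stable foliation of Lemma \ref{do-lem-foliation}, the tempered bound $\|\mathcal{P}^s\phi_0\|+C_2(\omega)(1+t)$ on the stable offset, invariance of $\mathcal{A}$, and closedness of $W^{cu}(\omega)$. The paper argues with $\operatorname{dist}$ of sets where you use pointwise preimages $a_t$.

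There is one harmless slip. The points $a_t$ cannot both satisfy $\phi(t,a_t,\theta_{-t}\omega)=a$ and lie in a fixed periodic window. This does not matter, because $\pi(x+2\pi\mathbf{e},\omega)=\pi(x,\omega)+2\pi\mathbf{e}$, so the quantity $\|a_t-\pi(a_t,\theta_{-t}\omega)\|$ you need to control is translation invariant.
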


    Before proving Proposition \ref{do-prop-mfld=ator}, we first show how it implies Theorem \ref{mainth-a}.\vspace{0.35\baselineskip}

    \textit{Proof of} \rm Theorem \ref{mainth-a}: By virtue of Lemma \ref{ator-lem-exist}, it suffices to show that $\mathcal{A}(\omega),\omega\in\Omega$, is a closed continuously differentiable curve on $M$. Due to $\mathscr{A}=W^{cu}$ for $k\ge k_{0}$ (in Proposition \ref{do-prop-mfld=ator}), and $W^{cu}(\omega)$ is continuously differentiable, we obtain that $\mathcal{A}(\omega)$ is a $C^{1}$-smooth curve on $M$. It remains to show $\mathcal{A}(\omega)$ is a closed curve. To this end, we first claim that
        \begin{align*}
            \phi^{-}(t,\xi+2\pi\mathbf{e},\omega)=\phi^{-}(t,\xi,\omega)+2\pi\mathbf{e},\quad \forall~t\leq 0,\,\xi\in E^{c},\,\omega\in\Omega,
        \end{align*}
        where $\phi^{-}(\cdot,\xi,\omega)$ is the fixed point of the Lyapunov-Perron operator $\mathcal{L}_{\xi}\colon\mathcal{C}_{\gamma}^{-}\to\mathcal{C}_{\gamma}^{-}$ defined in \eqref{ator-eqn-PcLxi}-\eqref{ator-eqn-PsLxi}. In fact, by following the procedures of the proof of Proposition \ref{mfld-prop-mfld}, one can check that for any $\xi\in E^{c}$, $\phi^{-}(\cdot,\xi,\omega)+2\pi\mathbf{e}$ is a fixed point of Lyapunov-Perron operator $\mathcal{L}_{\xi+2\pi\mathbf{e}}\colon\mathcal{C}^{-}_{\gamma}\to \mathcal{C}^{-}_{\gamma}$. The claim then follows from the uniqueness of the fixed point.

        Based on this claim, one can deduce from \eqref{ator-eqn-F} that 
        \begin{align*}
            F(\phi^{-}(t,\xi+2\pi\mathbf{e},\omega),\theta_{t}\omega)=F(\phi^{-}(t,\xi,\omega),\theta_{t}\omega), \quad \forall~t\leq 0,\,\xi\in E^{c},\,\omega\in\Omega.
		\end{align*}
        Together with the definition of $h$ in \eqref{ator-eqn-Wcu}, we obtain
        \begin{align*}
            h(\xi+2\pi\mathbf{e},\omega)=h(\xi,\omega),\quad\forall~\xi\in E^{c},\,\omega\in\Omega.
        \end{align*}
        This implies that, for any $\xi\in E^{c}$ and $\omega\in\Omega$,
        \begin{align*}
            h(\xi+2\pi\mathbf{e},\omega)+\xi+2\pi\mathbf{e}=h(\xi,\omega)+\xi+2\pi\mathbf{e}.
		\end{align*}
        So, by \eqref{ator-eqn-Wcu}, we have obtained that $W^{cu}(\omega)\,\bmod\,2\pi\mathbf{e}$, for each $\omega\in\Omega$, is a closed curve on $M$. Hence, $\mathcal{A}(\omega)$, for each $\omega\in\Omega$, is a closed $C^{1}$-smooth curve on $M$ for sufficiently large $k$. Thus we have completed the proof.\hfill $\square$
    \vspace{0.5\baselineskip} 
   
	We now turn to the proof of Proposition \ref{do-prop-mfld=ator}. We begin by establishing several useful lemmas. For any $\xi\in E^{c}$,  denote by $W^{cu}_{s}(\xi+h(\xi,\omega),\omega)$ the set of initial data whose forward orbits converge exponentially to $\phi(\cdot,\xi+h(\xi,\omega),\omega)$, that is,
    \begin{align*}
        W^{cu}_{s}(\xi+h(\xi,\omega),\omega)\triangleq\left\{\rho\in\mathbb{R}^{N}\mid\phi(\cdot,\rho,\omega)-\phi(\cdot,\xi+h(\xi,\omega),\omega)\in\mathcal{C}^{+}_{\gamma}\right\},\quad\omega\in\Omega.
    \end{align*}
    
    \begin{lemma}\label{do-lem-foliation} 
        Let $k_{0}>0$ be given by \eqref{eq-gap}. Then $W^{cu}$ possesses stable foliations, whose leafs coincide with $W^{cu}_s$ for $k\geq k_{0}$.
    \end{lemma}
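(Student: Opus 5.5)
The plan is the standard Lyapunov--Perron construction of stable fibers, run forward in time in $\mathcal{C}^{+}_{\gamma}$ and mirroring the proof of Proposition \ref{mfld-prop-mfld}. Fix $\omega\in\Omega$ and a base point $\phi_{*}=\xi+h(\xi,\omega)\in W^{cu}(\omega)$, and write $\bar\phi(t)=\phi(t,\phi_{*},\omega)$. For $\rho\in\mathbb{R}^{N}$ put $v(t)=\phi(t,\rho,\omega)-\bar\phi(t)$ for $t\ge0$. The difference $v$ solves
\[
\dot v=kLv+G(v,t,\omega),\qquad G(v,t,\omega)=F(\bar\phi(t)+v,\theta_t\omega)-F(\bar\phi(t),\theta_t\omega),
\]
with $G(0,t,\omega)=0$ and $\operatorname{Lip}_vG\le\operatorname{Lip}_\phi F\le1$.

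First I establish the analogue of Lemma \ref{mfld-lem-2}. The claim is that $\rho\in W^{cu}_s(\phi_{*},\omega)$ if and only if there is $v\in\mathcal{C}^{+}_{\gamma}$ with
\[
\mathcal{P}^{c}v(t)=-\int_t^{\infty}e^{kL(t-u)}\mathcal{P}^{c}G(v(u),u,\omega)\,\mathrm{d}u,\qquad
\mathcal{P}^{s}v(t)=e^{kLt}\eta+\int_0^{t}e^{kL(t-u)}\mathcal{P}^{s}G(v(u),u,\omega)\,\mathrm{d}u,
\]
where $\eta=\mathcal{P}^{s}(\rho-\phi_{*})$.

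\begin{itemize}
\item For necessity, I use the variation of constants formula on $[t,\tau]$ for the center part and let $\tau\to\infty$. On $E^{c}$ the group $e^{kLt}$ acts as the identity, since $L\mathbf{e}=0$ and $\mathcal{P}^{c}$ projects onto $\operatorname{span}\{\mathbf{e}\}$. Hence $\|e^{kL(t-\tau)}\mathcal{P}^{c}\|\le\sqrt N$ for all $t,\tau$, and the boundary term $\mathcal{P}^{c}v(\tau)$ decays like $e^{-\gamma\tau}$.
\item For the stable part, I use \eqref{eq-P^s} directly.
\end{itemize}

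Next, define the operator $\mathcal{J}_\eta$ on $\mathcal{C}^{+}_{\gamma}$ by the right-hand sides above. Using \eqref{eq-P^c}, \eqref{eq-P^s} and the identity $\int_0^\infty u^m e^{-(k-\gamma)u}\,\mathrm{d}u=m!/(k-\gamma)^{m+1}$, I expect the Lipschitz bound
\[
\|\mathcal{J}_\eta v-\mathcal{J}_\eta w\|_{\mathcal{C}^{+}_{\gamma}}\le\Big[\frac{\sqrt N}{\gamma}+\sum_{m=1}^{N-1}\frac{2k^{m-1}(N-m)}{(k-\gamma)^m}\Big]\operatorname{Lip}_\phi F\,\|v-w\|_{\mathcal{C}^{+}_{\gamma}}.
\]
The center term gives $\sqrt N\int_t^\infty e^{-\gamma u}\,\mathrm{d}u\le\sqrt N e^{-\gamma t}/\gamma$, and the stable term gives the same sum as in \eqref{mfld-pfprop-eqnstable}. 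This is exactly the gap quantity in \eqref{eq-gap}, so $\mathcal{J}_\eta$ is a uniform contraction for $k\ge k_0$, and the choice of $k_0$ carries over verbatim. The fixed point $v^{+}(\cdot,\eta,\phi_{*},\omega)$ depends Lipschitz-continuously on $\eta$. Consequently
\[
W^{cu}_s(\phi_{*},\omega)=\{\phi_{*}+\eta+g(\eta,\phi_{*},\omega)\mid\eta\in E^{s}\},\qquad g=-\int_0^\infty e^{-kLu}\mathcal{P}^{c}G(v^{+}(u),u,\omega)\,\mathrm{d}u,
\]
which is an $(N-1)$-dimensional Lipschitz graph over $E^{s}$ that passes through $\phi_{*}$ (take $\eta=0$, $v^{+}=0$).

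It then remains to show these leaves form a foliation and are invariant. Invariance, $\phi(t,W^{cu}_s(\phi_{*},\omega),\omega)=W^{cu}_s(\phi(t,\phi_{*},\omega),\theta_t\omega)$, follows from the cocycle property together with the definition via $\mathcal{C}^{+}_{\gamma}$. Two leaves through distinct points of $W^{cu}(\omega)$ are disjoint: a common point would make their base trajectories differ by an element of $\mathcal{C}^{+}_{\gamma}$. Their difference, however, lies in $\mathcal{C}^{-}_{\gamma}$ through the Lyapunov--Perron representation on $W^{cu}$. Combining the two integral equations on all of $\mathbb{R}$ in the weighted space then gives a contraction forcing the difference to vanish.

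The main obstacle is showing that the leaves cover $\mathbb{R}^N$, i.e.\ that each $\rho$ lies on the leaf of some $\phi_{*}\in W^{cu}(\omega)$. For this I would solve the coupled fixed point problem for the pair $(\xi,v)$. This amounts to showing that the map $\xi\mapsto\mathcal{P}^{c}\big(\phi_{*}(\xi)+\eta+g\big)-\mathcal{P}^{c}\rho$ on the line $E^{c}$ has a zero. Since $g$ is bounded uniformly in $\xi$ with Lipschitz constant $<1$, the map $\xi\mapsto\xi+\mathcal{P}^{c}g$ is a bi-Lipschitz homeomorphism of $E^{c}$, and this yields existence and uniqueness. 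The non-diagonalizability of $L$ enters only through the polynomial factor $p(t)$, which is already absorbed in \eqref{eq-gap}. $C^{1}$-smoothness of the leaves, if needed, is standard (cf.\ \cite{DLB2004,ShenLuZhang20}).
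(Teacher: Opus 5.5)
Your construction of each leaf matches the paper's. Your $\mathcal{J}_\eta$ is its $\mathcal{B}_\eta$, the contraction constant is the bracket in \eqref{eq-gap}, and your graph $\{\phi_*+\eta+g\mid\eta\in E^s\}$ is \eqref{ator-lem2-Wcus} with $\tilde h=\xi+g$. Your normalization $\eta=\mathcal{P}^s(\rho-\phi_*)=\mathcal{P}^s\rho-h(\xi,\omega)$ is the correct one: the paper writes $\eta=\mathcal{P}^s\rho$, but later uses $\eta_0=\mathcal{P}^s\phi_0-h(\xi_0,\theta_{-t}\omega)$.

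The paper's proof stops at that graph representation, so the foliation properties you add go beyond it. Your invariance argument is sound. Your disjointness argument is also sound: in the two-sided norm $\sup_{t\in\mathbb{R}}e^{\gamma t}\|w(t)\|$, with the center part integrated from $+\infty$ and the stable part from $-\infty$, the constant is again the one in \eqref{eq-gap}. Covering is exactly what the proof of Proposition \ref{do-prop-mfld=ator} needs when it picks $\xi_0$ with $\phi_0\in W^{cu}_s(\xi_0+h(\xi_0,\theta_{-t}\omega),\theta_{-t}\omega)$, so it is worth getting right.

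\textbf{The gap is in your covering step.} It rests on an unsupported claim that $\xi\mapsto g$ has Lipschitz constant below $1$.

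\emph{Why the estimates do not give it.} Along $E^c$, $g$ depends on $\xi$ not only through $\eta=\mathcal{P}^s\rho-h(\xi,\omega)$. It also depends on $\xi$ through the base trajectory $\phi(\cdot,\xi+h(\xi,\omega),\omega)$ inside $G$. Two base trajectories on $W^{cu}(\omega)$ need not stay $O(\|\xi-\xi'\|)$-close for all $t\ge0$. Their first coordinates solve the same non-contracting scalar equation $\dot\phi_1=\alpha_1-\sin(\phi_1+z_1(\theta_t\omega))+H_1(\theta_t\omega)$, so their difference is controlled only by $e^{t}$ times its initial value. Hence the $\mathcal{C}^+_\gamma$ contraction estimates give no Lipschitz bound in $\xi$ at all, let alone one below $1$.

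\emph{General repair.} Run your construction with base point $\rho$ itself; nothing in it uses $\phi_*\in W^{cu}$.
\begin{enumerate}
\item This gives the fibre $\{\rho+\eta+\ell_\rho(\eta)\mid\eta\in E^s\}$ with $\|\ell_\rho(\eta)\|\le c\|\eta\|$.
\item The point $\xi+h(\xi,\omega)$ lies on this fibre if and only if $\xi=\mathcal{P}^c\rho+\ell_\rho(h(\xi,\omega)-\mathcal{P}^s\rho)$.
\item The right-hand side is continuous and bounded in $\xi$, because $\|h\|\le C_2(\omega)$. The intermediate value theorem on the line $E^c$ therefore gives a solution.
\item Differing by an element of $\mathcal{C}^+_\gamma$ is an equivalence relation. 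So $\rho$ lies on the leaf through $\xi+h(\xi,\omega)$, and uniqueness follows from your disjointness argument.
\end{enumerate}

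\emph{Repair specific to this system.} Here $\mathcal{P}^cG(v,t)=[\sin(\bar\phi_1(t)+z_1(\theta_t\omega))-\sin(\bar\phi_1(t)+v_1+z_1(\theta_t\omega))]\mathbf{e}$. This depends only on $v_1$ and vanishes when $v_1=0$, because the first oscillator receives no input. Consequently:
\begin{enumerate}
\item $\mathcal{J}_\eta$ maps the closed subspace $\{v\in\mathcal{C}^+_\gamma\mid v_1\equiv0\}$ into itself, so its unique fixed point lies there.
\item Hence $g\equiv0$, and the leaf through $\xi+h(\xi,\omega)$ is the flat hyperplane $\xi+E^s=\{x\in\mathbb{R}^N\mid\mathcal{P}^cx=\xi\}$.
\item Covering is then immediate: take $\xi=\mathcal{P}^c\rho$. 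Disjointness is immediate too, since the graph $W^{cu}(\omega)$ over $E^c$ meets each hyperplane exactly once.
\end{enumerate}
So your Lipschitz claim is in fact true here, with constant $0$, but only for this structural reason, not because of the estimates you cite.
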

	\begin{proof}        
		For any $\xi\in E^{c}$, $\eta\in E^{s}$ and $\omega\in\Omega$, define the Lyapunov-Perron  operator on $\mathcal{C}_{\gamma}^{+}$ as 
		$\mathcal{B}_{\eta}\colon\mathcal{C}^{+}_{\gamma}\to \mathcal{C}^{+}_{\gamma};\check{\phi}\mapsto\mathcal{B}_{\eta}\left(\check{\phi}\right)$, where
		\begin{align*}
			\mathcal{P}^{s}\mathcal{B}_{\eta}\Big(\check{\phi}\Big)(t)&=e^{kLt}\eta+\int_{0}^t e^{kL(t-u)} \mathcal{P}^{s} \Big[ F\Big( \check{\phi}(u)+\phi(u,\xi + h(\xi, \omega),\omega),\theta_u \omega\Big)\\
				&\quad-F(\phi(u,\xi+h(\xi, \omega),\omega),\theta_u \omega)\Big]\,\mathrm{d}u
            \end{align*}
            and
            \begin{align*}
                \mathcal{P}^{c}\mathcal{B}_{\eta}\Big(\check{\phi}\Big)(t)&=\int_{\infty}^t e^{kL(t-u)} \mathcal{P}^{c}\Big[ F\Big( \check{\phi}(u)+\phi(u,\xi + h(\xi, \omega),\omega),\theta_u \omega\Big)\\
				&\quad-F(\phi(u,\xi+h(\xi, \omega),\omega),\theta_u \omega) \Big]\,\mathrm{d}u
		\end{align*}
        for $t\geq0$. It is not hard to see from the Lipschitz continuity of $F(\cdot,\omega)$ and \eqref{eq-P^c}-\eqref{eq-P^s} that the operator $\mathcal{B}_\eta{\colon\mathcal{C}^{+}_{\gamma}\to \mathcal{C}^{+}_{\gamma}}$ is well defined. By following the similar arguments as in Lemma \ref{mfld-lem-2}, one can obtain $\rho\in W^{cu}_{s}(\xi+h(\xi,\omega),\omega)$ if and only if the solution of $\phi$ defined by system \eqref{prly-eqn-phi} with initial data $\rho$ is the fixed point of the operator $\mathcal{B}_\eta\colon\mathcal{C}^{+}_{\gamma}\to \mathcal{C}^{+}_{\gamma}$ with $\eta=\mathcal{P}^{s}\rho$. 
        
        We now prove the Lyapunov-Perron operator $\mathcal{B}_{\eta}\colon\mathcal{C}^{+}_{\gamma}\to \mathcal{C}^{+}_{\gamma}$ is a contraction. In fact, for $\check{\phi},\mathring{\phi}\in\mathcal{C}_{\gamma}^{+}$, it follows from the Lipschitz continuity of $F(\cdot,\omega)$ and \eqref{eq-P^c}-\eqref{eq-P^s} that
		\begin{align}\label{do-pflem1-eqncenter2}
			\left\|\mathcal{P}^{c}\mathcal{B}_{\eta}\Big(\check{\phi}\Big)-\mathcal{P}^{c} \mathcal{B}_{\eta}\Big(\mathring{\phi}\Big)\right\|_{\mathcal{C}^{+}_{\gamma}}\leq\frac{\sqrt{N}}{\gamma}\cdot\operatorname{Lip}_{\phi}F\cdot\left\|\check{\phi}-\mathring{\phi}\right\|_{\mathcal{C}^{+}_{\gamma}}
		\end{align}and 
	{\allowdisplaybreaks\begin{equation}\label{do-pflem1-eqnstable2}
		\begin{aligned}
            \left\|\mathcal{P}^{s}\mathcal{B}_{\eta}\Big(\check{\phi}\Big)-\mathcal{P}^{s} \mathcal{B}_{\eta}\Big(\mathring{\phi}\Big)\right\|_{\mathcal{C}^{+}_{\gamma}}&\leq\operatorname{Lip}_{\phi}F\cdot\left\|\check{\phi}-\mathring{\phi}\right\|_{\mathcal{C}^{+}_{\gamma}}\cdot\int_{-\infty}^{0}p(-u)e^{(k-\gamma)u}\mathrm{d}u\\
				&\leq\sum\limits_{m=1}^{N-1}\frac{2k^{m-1}(N-m)}{(k-\gamma)^{m}}\cdot\operatorname{Lip}_{\phi}F\cdot\Big\|\check{\phi}-\mathring{\phi}\Big\|_{\mathcal{C}^{+}_{\gamma}}.
		\end{aligned}
		\end{equation}}By \eqref{do-pflem1-eqncenter2}-\eqref{do-pflem1-eqnstable2}, one has
		\begin{align*}
			\left\|\mathcal{B}_{\eta}\Big(\check{\phi}\Big)- \mathcal{B}_{\eta}\Big(\mathring{\phi}\Big)\right\|_{\mathcal{C}^{+}_{\gamma}}\leq\left[\frac{\sqrt{N}}{\gamma}+\sum\limits_{m=1}^{N-1}\frac{2k^{m-1}(N-m)}{(k-\gamma)^{m}}\right]\cdot\operatorname{Lip}_{\phi}F \cdot\Big\|\check{\phi}-\mathring{\phi}\Big\|_{\mathcal{C}^{+}_{\gamma}}.
		\end{align*}
        Combining with \eqref{eq-gap}, it entails that $\mathcal{B}_{\eta}\colon\mathcal{C}^{+}_{\gamma}\to \mathcal{C}^{+}_{\gamma}$ is a contraction, for $k\ge k_{0}$. So, by using the uniform contraction principle, we obtain that, for each $\omega\in\Omega,\xi\in E^{c}$ and $\eta\in E^{s}$, the operator $\mathcal{B}_{\eta}\colon\mathcal{C}^{+}_{\gamma}\to \mathcal{C}^{+}_{\gamma}$ has a unique fixed point in $\mathcal{C}^{+}_{\gamma}$, denoted by $\phi^{+}(\cdot,\xi,\eta,\omega)$. 
		
		Now, for $\omega\in\Omega,\xi\in E^{c}$, it follows from the definition of $W^{cu}_{s}(\xi+h(\xi,\omega),\omega)$ that
		\begin{equation}\label{ator-lem2-Wcus}
			\begin{aligned}
				W^{cu}_{s}(\xi+h(\xi,\omega),\omega)&=\left\{\rho\in\mathbb{R}^{N}\;\middle|\;\phi(\cdot,\rho,\omega)-\phi(\cdot,\xi+h(\xi,\omega),\omega)\in\mathcal{C}^{+}_{\gamma}\right\}\\
				&=\left\{\xi+h(\xi,\omega)+\phi^{+}(0,\xi,\eta,\omega)\;\middle|\;\eta\in E^{s}\right\}\\
				&=\left\{\eta+h(\xi,\omega)+\tilde{h}(\xi,\eta,\omega)\;\middle|\;\eta\in E^{s}\right\},
			\end{aligned}
		\end{equation}
		where 
		\begin{equation}\label{eqn-tildeh}
            \begin{aligned}
                \tilde{h}(\xi,\eta,\omega)&=\xi+\int_{\infty}^{0}e^{-kLu} \mathcal{P}^{c} \Big[F\Big(\phi^{+}(u,\xi,\eta,\omega)+\phi(u,\xi + h(\xi, \omega),\omega),\theta_u \omega\Big)\\
				&\quad- F( \phi(u,  \xi + h(\xi, \omega),\omega),\theta_u \omega) \Big]\,\mathrm{d}u.
            \end{aligned}
		\end{equation}
        Thus, we have completed the proof of the lemma.
    \end{proof}
	
    Next, we give a technical estimate of the fixed point $\phi^{+}(\cdot,\xi,\eta,\omega)$ of the operator $\mathcal{B}_{\eta}\colon\mathcal{C}^{+}_{\gamma}\to \mathcal{C}^{+}_{\gamma}$, which is useful in the proof of Proposition \ref{do-prop-mfld=ator}.
    \begin{lemma}
		For any $\omega\in\Omega,\xi\in E^{c}$ and $\eta\in E^{s}$, $\phi^{+}(\cdot,\xi,\eta,\omega)$ satisfies 
		\begin{equation}\label{do-lem2-eqn1}
			\left\|\phi^{+}(\cdot,\xi,\eta,\omega)\right\|_{\mathcal{C}_{\gamma}^{+}}\leq \frac{C_{3}}{1-\left[\frac{\sqrt{N}}{\gamma}+\sum\limits_{m=1}^{N-1}\frac{2k^{m-1}(N-m)}{(k-\gamma)^{m}}\right]\cdot\operatorname{Lip}_{\phi}F } \|\eta\|,
        \end{equation}
		for $k\ge k_{0}$ and $\gamma$ as given by \eqref{eq-gap}, where $C_{3}=\sum\limits_{m=0}^{N-2}\frac{2(N-m-1)(km)^m}{m!(k-\gamma)^{m}e^{m}}$.
    \end{lemma}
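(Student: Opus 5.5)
The plan is to use the fixed-point identity $\phi^{+}=\mathcal{B}_{\eta}(\phi^{+})$ and split $\mathcal{B}_{\eta}$ into a part that does not depend on its argument and a part that is Lipschitz in it. The inhomogeneous term is $e^{kLt}\eta$ with $\eta\in E^{s}$. Evaluating $\mathcal{B}_{\eta}$ at the zero function $\check{\phi}\equiv0$ makes the $F$-difference vanish. Hence $\mathcal{B}_{\eta}(0)(t)=e^{kLt}\eta$, and in particular $\mathcal{P}^{c}\mathcal{B}_\eta(0)=0$ since $\mathcal{P}^c\eta=0$ and $\mathcal{P}^c$ commutes with $e^{kLt}$.

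Writing $\kappa\triangleq\big[\frac{\sqrt{N}}{\gamma}+\sum_{m=1}^{N-1}\frac{2k^{m-1}(N-m)}{(k-\gamma)^{m}}\big]\operatorname{Lip}_{\phi}F<1$, the contraction estimates \eqref{do-pflem1-eqncenter2}--\eqref{do-pflem1-eqnstable2} from the proof of Lemma \ref{do-lem-foliation} give
\begin{align*}
\|\phi^{+}\|_{\mathcal{C}^{+}_{\gamma}}\le\|\mathcal{B}_{\eta}(\phi^{+})-\mathcal{B}_{\eta}(0)\|_{\mathcal{C}^{+}_{\gamma}}+\|\mathcal{B}_{\eta}(0)\|_{\mathcal{C}^{+}_{\gamma}}\le\kappa\|\phi^{+}\|_{\mathcal{C}^{+}_{\gamma}}+\sup_{t\ge0}\|e^{\gamma t}e^{kLt}\eta\|.
\end{align*}
Rearranging, which is allowed because $\kappa<1$ for $k\ge k_0$, gives \eqref{do-lem2-eqn1} provided we show $\sup_{t\ge0}\|e^{\gamma t}e^{kLt}\eta\|\le C_{3}\|\eta\|$.

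For this last bound, $\eta=\mathcal{P}^{s}\eta$, so \eqref{eq-P^s} yields $\|e^{\gamma t}e^{kLt}\eta\|\le p(t)e^{-(k-\gamma)t}\|\eta\|$. Expanding $p$ via \eqref{ator-lem-pt}, it suffices to bound each term
\[
\frac{2(N-m-1)k^{m}}{m!}\,t^{m}e^{-(k-\gamma)t},\qquad t\ge0 .
\]
An elementary calculus step shows that $t^{m}e^{-(k-\gamma)t}$ attains its maximum at $t=m/(k-\gamma)$, with value $\big(\frac{m}{(k-\gamma)e}\big)^{m}$. Summing over $m$ then gives exactly
\[
C_{3}=\sum_{m=0}^{N-2}\frac{2(N-m-1)(km)^{m}}{m!(k-\gamma)^{m}e^{m}},
\]
with the convention $0^{0}=1$ for the $m=0$ term.

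The only delicate point is checking that the Lipschitz estimates from Lemma \ref{do-lem-foliation} apply to the pair $(\phi^{+},0)$. They do, because those estimates hold for any two elements of $\mathcal{C}^{+}_{\gamma}$ and the zero function lies in $\mathcal{C}^{+}_{\gamma}$. Otherwise the argument is routine.
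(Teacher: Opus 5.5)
Your proof is correct and follows essentially the same route as the paper. The paper first bounds $\|\phi^{+}(\cdot,\xi,\eta,\omega)-\phi^{+}(\cdot,\xi,0,\omega)\|_{\mathcal{C}^{+}_{\gamma}}$ by splitting off $\mathcal{B}_{\eta}(\phi^{+}(\cdot,\xi,0,\omega))-\mathcal{B}_{0}(\phi^{+}(\cdot,\xi,0,\omega))=e^{kLt}\eta$, and then uses $\phi^{+}(\cdot,\xi,0,\omega)=0$; this is exactly your direct comparison with the zero function. Your term-by-term bound $\sup_{t\ge0}p(t)e^{-(k-\gamma)t}\le C_{3}$ is also the paper's, and you rightly state it as an inequality where the paper writes an equality.
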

    \begin{proof}
		Let $k\geq k_{0}$ and $\gamma$ be as given by \eqref{eq-gap}. We fix $\omega\in\Omega,\xi\in E^{c}$ and $\eta\in E^{s}$, and first show that
		\begin{equation}\label{do-lemma-hatpsi}
        {\fontsize{8}{9.6}\selectfont\begin{aligned}
			\Big\|\phi^{+}(\cdot,\xi,\eta,\omega)-\phi^{+}(\cdot,\xi,0, \omega)\Big\|_{\mathcal{C}^{+}_{\gamma}}\leq \frac{C_{3}}{1-\left[\frac{\sqrt{N}}{\gamma}+\sum\limits_{m=1}^{N-1}\frac{2k^{m-1}(N-m)}{(k-\gamma)^{m}}\right]\cdot\operatorname{Lip}_{\phi}F } \|\eta\|.
            \end{aligned}}
		\end{equation}
        The Lipschitz continuity of $F(\cdot,\omega)$, together with \eqref{eq-P^c}-\eqref{eq-P^s}, leads to
		\begin{equation}\label{do-pflem2-eqn1}
			{\fontsize{8}{9.6}\selectfont\begin{aligned}
			&\left\|\mathcal{B}_{\eta}\Big(\phi^{+}(\cdot,\xi,\eta,\omega)\Big) - \mathcal{B}_{\eta}\Big(\phi^{+}(\cdot,\xi,0,\omega)\Big)\right\|_{\mathcal{C}^{+}_{\gamma}} \\
				&\quad\leq\left[\frac{\sqrt{N}}{\gamma}+\sum\limits_{m=1}^{N-1}\frac{2k^{m-1}(N-m)}{(k-\gamma)^{m}}\right]\cdot\operatorname{Lip}_{\phi}F\cdot\Big\|\phi^{+}(\cdot,\xi,\eta,\omega)-\phi^{+}(\cdot,\xi,0, \omega)\Big\|_{\mathcal{C}^{+}_{\gamma}}
		\end{aligned}}
        \end{equation}and 
        {\allowdisplaybreaks\begin{equation}\label{do-pflem2-eqn2}
            \begin{aligned}
                \left\|\mathcal{B}_{\eta}\Big(\phi^{+}(\cdot,\xi,0,\omega)\Big)- \mathcal{B}_{0}\Big(\phi^{+}(\cdot,\xi,0, \omega)\Big)\right\|_{\mathcal{C}^{+}_{\gamma}}
                \leq\sup_{t\geq0}p(t)e^{-(k-\gamma)t}\|\eta\| =C_{3}\|\eta\|,
            \end{aligned}    
        \end{equation}}where $p(t)=\sum_{m=0}^{N-2}2(N-m-1)\frac{(kt)^m}{m!}$ and we  used the equality
		\begin{align*}
		  \sup_{t\geq0}t^{m}e^{-(k-\gamma)t}=\frac{m^m}{(k-\gamma)^{m}e^{m}},\quad (m\geq0)
		\end{align*} 
        in the equality of \eqref{do-pflem2-eqn2}.  Based on \eqref{do-pflem2-eqn1}-\eqref{do-pflem2-eqn2} and the fact that $\phi^{+}(\cdot,\xi,\eta,\omega)$ is the fixed point of the operator $\mathcal{B}_{\eta}\colon\mathcal{C}^{+}_{\gamma}\to \mathcal{C}^{+}_{\gamma}$, we obtain \eqref{do-lemma-hatpsi}.

        Now, we prove \eqref{do-lem2-eqn1}. This can be done by \eqref{do-lemma-hatpsi} and $\phi^{+}(t,\xi,0,\omega)=0$ for $t\geq0$. Thus, we have completed the proof. 
    \end{proof}
	
	Now, we are ready to prove Proposition \ref{do-prop-mfld=ator}:\vspace{0.35\baselineskip}

    \textit{Proof of} \rm Proposition \ref{do-prop-mfld=ator}: Let $k\geq k_{0}$ and $\gamma$ be as given by \eqref{eq-gap}. Fix any $\omega\in\Omega$. We first claim that $W^{cu}$ attracts any $\mathbb{R}^{N}$-valued random set $D$ satisfying $D\,\bmod\,2\pi\mathbf{e}\in\mathcal{D}$, where $\mathcal{D}$ is a family of $M$-valued tempered random sets.
		
		In fact, for every $t\geq0$ and  $\phi_{0}\in D(\theta_{-t}\omega)$, one can find some $\xi_{0}\in E^{c}$ such that
		\begin{equation*}
			\phi_{0}\in W^{cu}_s(\xi_0 + h(\xi_{0}, \theta_{-t} \omega), \theta_{-t} \omega),
		\end{equation*}
        where $W^{cu}_{s}$ is defined in \eqref{ator-lem2-Wcus}. Then, we write 
		\begin{equation}\label{do-pfprop1-eqn1}
			\phi_0 = \eta_0 + h(\xi_0, \theta_{-t} \omega) + \tilde{h}(\xi_0, \eta_0, \theta_{-t} \omega)\quad \text{with}\quad \eta_0\triangleq\mathcal{P}^{s}\phi_0-h(\xi_0,\theta_{-t}\omega),
		\end{equation}
		where $\tilde{h}$ is defined in \eqref{eqn-tildeh}. So,
    \begin{equation}\label{do-pfprop1-eqn2}
               {\fontsize{8}{9.6}\selectfont
        \begin{aligned}
			&\|\phi(t,\phi_0,\theta_{-t} \omega) - \phi(t,\xi_0 + h(\xi_0,\theta_{-t} \omega),\theta_{-t}\omega)\|\\
			=\quad\quad&\Big\|\phi^{+}(t, \xi_0, \eta_0, \theta_{-t}\omega)\Big\|\\
			\stackrel{\eqref{do-lem2-eqn1}+\eqref{do-pfprop1-eqn1}}{\leq}& \frac{C_{3} e^{-\gamma t}}{1-\left[\frac{\sqrt{N}}{\gamma}+\sum\limits_{m=1}^{N-1}\frac{2k^{m-1}(N-m)}{(k-\gamma)^{m}}\right]\cdot\operatorname{Lip}_{\phi}F }\big[\| \mathcal{P}^{s}\phi_0\|+\| h(\xi_0,\theta_{-t}\omega)\|\big]\\
			\stackrel{\eqref{mfld-pfprop1-eqnh1}+\eqref{mfld-pfprop1-eqnh2}}{\leq}&\frac{C_{3} e^{-\gamma t}}{1-\left[\frac{\sqrt{N}}{\gamma}+\sum\limits_{m=1}^{N-1}\frac{2k^{m-1}(N-m)}{(k-\gamma)^{m}}\right]\cdot\operatorname{Lip}_{\phi}F }\big[\| \mathcal{P}^{s}\phi_0\|+ C_{2}(\omega)(1+\vert t \vert)\big],
    \end{aligned}}
    \end{equation}
   where the equality follows from that $\phi^{+}(t,\xi_{0},\eta_{0},\omega)+\phi(t,\xi_{0}+h(\xi_{0},\omega),\omega)$ is the solution of $\phi$ defined in system \eqref{prly-eqn-phi} with initial data $\phi_{0}$.
		
		Therefore, for any $t\geq 0$, 
        {\allowdisplaybreaks\begin{align*}
            &\operatorname{dist}\big(\phi(t,D(\theta_{-t}\omega), \theta_{-t} \omega), W^{cu}(\omega)\big)\\
            \leq\;\,\,&\sup_{\phi_0 \in D(\theta_{-t}\omega)} \|\phi(t,\phi_0,\theta_{-t}\omega)-\phi(t,\xi_0 + h(\xi_0, \theta_{-t} \omega),\theta_{-t} \omega)\|\\
            \stackrel{\eqref{do-pfprop1-eqn2}}{\leq}&\sup_{\phi_0 \in D(\theta_{-t}\omega)}\frac{C_{3} e^{-\gamma t}}{1-\left[\frac{\sqrt{N}}{\gamma}+\sum\limits_{m=1}^{N-1}\frac{2k^{m-1}(N-m)}{(k-\gamma)^{m}}\right]\cdot\operatorname{Lip}_{\phi}F }\big[\| \mathcal{P}^{s}\phi_0\|+ C_{2}(\omega)(1+\vert t \vert)\big],
        \end{align*}}where we have used the invariance of $W^{cu}$ for the first  inequality. Thus, for $k\geq k_{0}$, one obtains
		\begin{equation*}
			\operatorname{dist}\big(\phi(t,D(\theta_{-t}\omega),\theta_{-t}\omega), W^{cu}(\omega)\big)\to0,~~\text{as}~~  t\to\infty.
		\end{equation*}
		So, we have proved the claim. Choose the $\mathbb{R}^{N}$-valued random set $D=\mathscr{A}$ (because $\mathcal{A}\in\mathcal{D}$). Then, it follows from the claim that 
        \begin{align*}
            \operatorname{dist}\big(\mathscr{A}(\omega),W^{cu}(\omega)\big)=\operatorname{dist}\big(\phi(t,\mathscr{A}(\theta_{-t}\omega),\theta_{-t}\omega),W^{cu}(\omega)\big)\to0,~~\text{as}~~t\to\infty.
        \end{align*}
        This implies $\mathscr{A}(\omega)\subset W^{cu}(\omega)$. 
        
        On the other hand, it follows from \eqref{mfld-pfprop1-eqnh1} and \eqref{mfld-pfprop1-eqnh2} that $\hat{W}^{cu}\triangleq W^{cu}\,\bmod\, 2\pi\mathbf{e}\in \mathcal{D}$. Recall that
        \begin{align*}
            \operatorname{dist}\big(\phi(t,W^{cu}(\theta_{-t}\omega),\theta_{-t}\omega),\mathscr{A}(\omega)\big)=\operatorname{dist}\big(\Phi(t,\hat{W}^{cu}(\theta_{-t}\omega),\theta_{-t}\omega),\mathcal{A}(\omega)\big).
        \end{align*}
       Hence, together with Lemma \ref{ator-lem-exist} and the invariance of $W^{cu}$, one obtains
       \begin{align*}
           \operatorname{dist}\big(W^{cu}(\omega),\mathscr{A}(\omega)\big)=\operatorname{dist}\big(\Phi(t,\hat{W}^{cu}(\theta_{-t}\omega),\theta_{-t}\omega),\mathcal{A}(\omega)\big)\to0,~~\text{as}~~t\to\infty.
       \end{align*} 
       This yields that $W^{cu}(\omega)\subset\mathscr{A}(\omega)$. Consequently, $\mathscr{A}=W^{cu}$ for $k\geq k_{0}$. Thus, we have completed the proof. \hfill $\square$

    \section{Numerical Simulations}\label{section-simulation}
        In this section, we elucidate the dynamical order in stochastic system \eqref{intro-eqn-psi} for $N=2,3$ from the perspective of numerical simulations. More precisely, a total order structure with respect to standard order ``$\leq$" in $R^{N}$ ($N=2,3$) and an intuitive, simple asymptotic one-dimensional structure in $M$ can be seen.

        Firstly, for $N=2$, we randomly select $500$ initial states from 
        $[0,2\pi]^{2}$ (cf. red points in Figure \ref{sim-fig-psi2}) with $k=3$, $(\alpha_{1},\alpha_{2})=(1,2)$, $(\epsilon_{1},\epsilon_{2})=(0.2,0.3)$ for stochastic system \eqref{intro-eqn-psi}. Blue points in Figure \ref{sim-fig-psi2}, trajectories from random initial states at the $100$th iteration reveal a total order structure with respect to the standard order ``$\le$" in $\mathbb{R}^{2}$. Figure \ref{sim-fig-tildepsi2} is obtained by wrapping the phase space $\mathbb{R}^{2}$ (of Figure \ref{sim-fig-psi2}) along the vector $(2\pi,2\pi)^{\top}$ to yield $M$ (diffeomorphic to $\mathbb{S}^{1}\times\mathbb{R}^{1}$) and viewing the red and blue points from Figure \ref{sim-fig-psi2} in $M$. In Figure \ref{sim-fig-tildepsi2}, we view $\mathbb{S}^{1}\times\mathbb{R}^{1}$; and moreover, the red points (initial states) are readily attracted to the simple asymptotic one-dimensional structure formed by the blue points, which themselves form a closed curve.
   
        \begin{figure}[H]
		\begin{minipage}[t]{0.45\textwidth}
			\centering
			\includegraphics[width=1.0\textwidth]{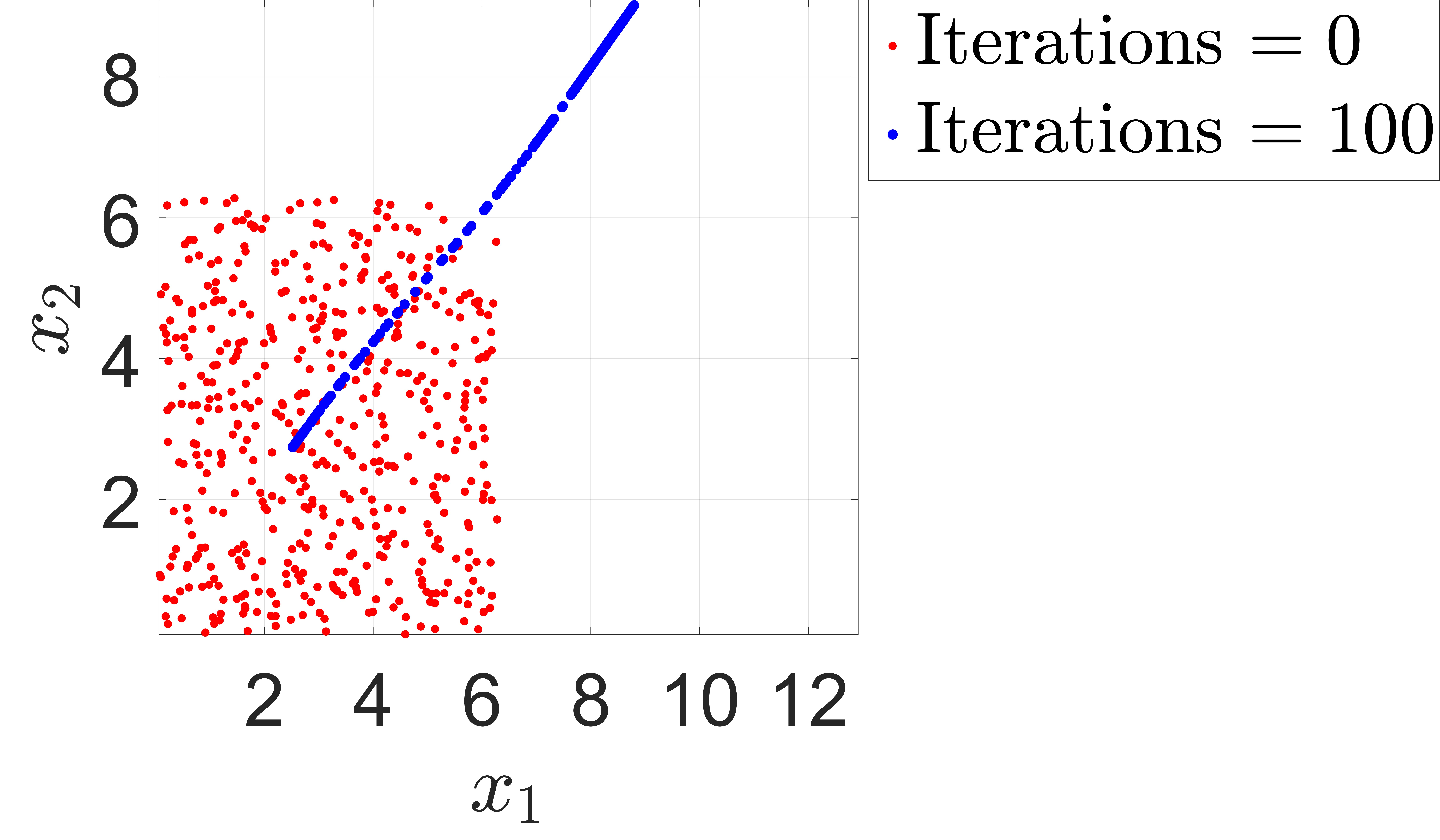}
                \caption{A total order structure in $\mathbb{R}^{2}$.}
			\label{sim-fig-psi2}
		\end{minipage}
		\hfill
		\begin{minipage}[t]{0.45\textwidth}
			\centering
			\includegraphics[width=1.0\textwidth]{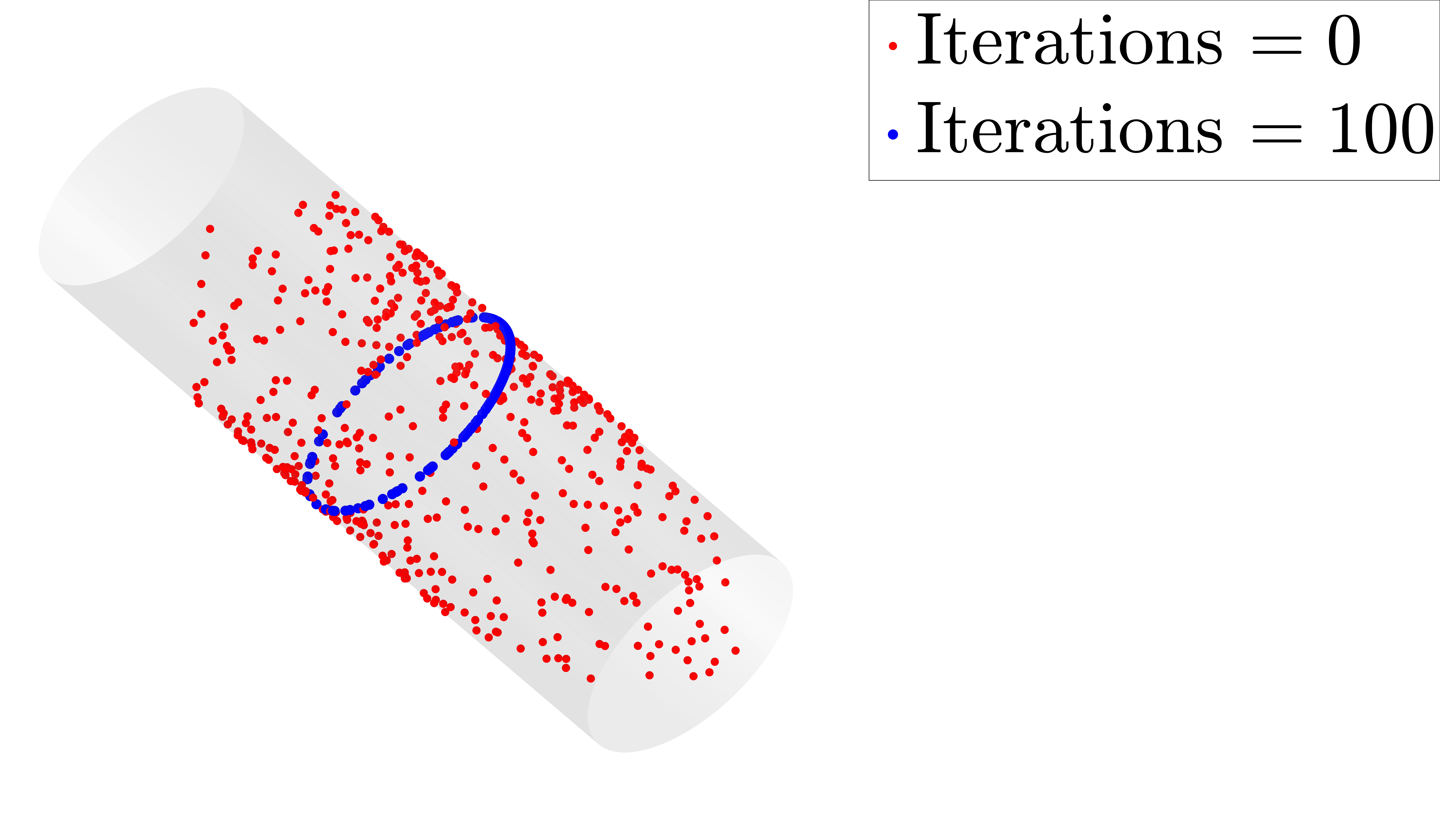}
			\caption{Wrapping along $(2\pi,2\pi)^{\top}$.}
			\label{sim-fig-tildepsi2}
		\end{minipage}
	\end{figure}
    
        Next, for $N=3$, we randomly select $500$ initial states from $[0,2\pi]^{3}$ (cf. red points in Figure \ref{sim-fig-psi3}) with $k=8$, $(\alpha_{1},\alpha_{2},\alpha_{3})=(1,2,3)$, $(\epsilon_{1},\epsilon_{2},\epsilon_3)=(0.1,0.2,0.3)$ for stochastic system \eqref{intro-eqn-psi}. Also, a total order structure with respect to the standard order ``$\le$" in $\mathbb{R}^{3}$ is revealed by the blue points in Figure \ref{sim-fig-psi3}.

         By wrapping the phase space $\mathbb{R}^{3}$ (of Figure \ref{sim-fig-psi3})  along the vector $(2\pi,2\pi,2\pi)^{\top}$, one can obtain $M$ (diffeomorphic to $\mathbb{S}^{1}\times\mathbb{R}^{2}$) and
        see the dynamics of stochastic system \eqref{intro-eqn-psi} in $M$. Note that $\mathbb{S}^{1}\times\mathbb{R}^{2}$ can be embedded in $\mathbb{R}^{4}$. Therefore, we can wrap the points in Figure \ref{sim-fig-psi3} along the vector $(2\pi,2\pi,2\pi)^{\top}$ and plot them in $\mathbb{R}^{4}$. More precisely, define the map $\zeta:\mathbb{R}^{3}\to \mathbb{S}^{1}\times\mathbb{R}^{2}\subset\mathbb{R}^{4};\,x\triangleq(x_{1},x_{2},x_{3})\mapsto \zeta(x)$ by
        {\small\begin{align*}
            \zeta(x)=\left(\cos\left(\frac{x_1+x_2+x_3}{3}\right),\,\sin\left(\frac{x_1+x_2+x_3}{3}\right),\,\frac{x_1-x_2}{\sqrt{2}},\,\frac{x_1+x_2-2x_3}{\sqrt{6}}\right).
        \end{align*}}For any point $y=(y_{1},y_{2},y_{3},y_{4})\in \mathbb{R}^4$, we orthogonally project it onto the coordinate hyperplanes $\{y\in\mathbb{R}^4\mid y_{4}=0\}$ and $\{y\in\mathbb{R}^4\mid y_{3}=0\}$ as shown in Figure \ref{sim-fig-tildepsi3-123} and Figure \ref{sim-fig-tildepsi3-124}, respectively. In both figures, the red points (initial states) are readily attracted to the simple asymptotic one-dimensional structure formed by the blue points. Note that the blue points in Figure \ref{sim-fig-tildepsi3-123} share the same third coordinate and the blue points in Figure \ref{sim-fig-tildepsi3-124} share the same fourth coordinate. Hence,  the blue points from Figure \ref{sim-fig-psi3} form a closed curve in view of $M$.
        \begin{figure}[H]
            \centering
            \includegraphics[width=0.6\linewidth]{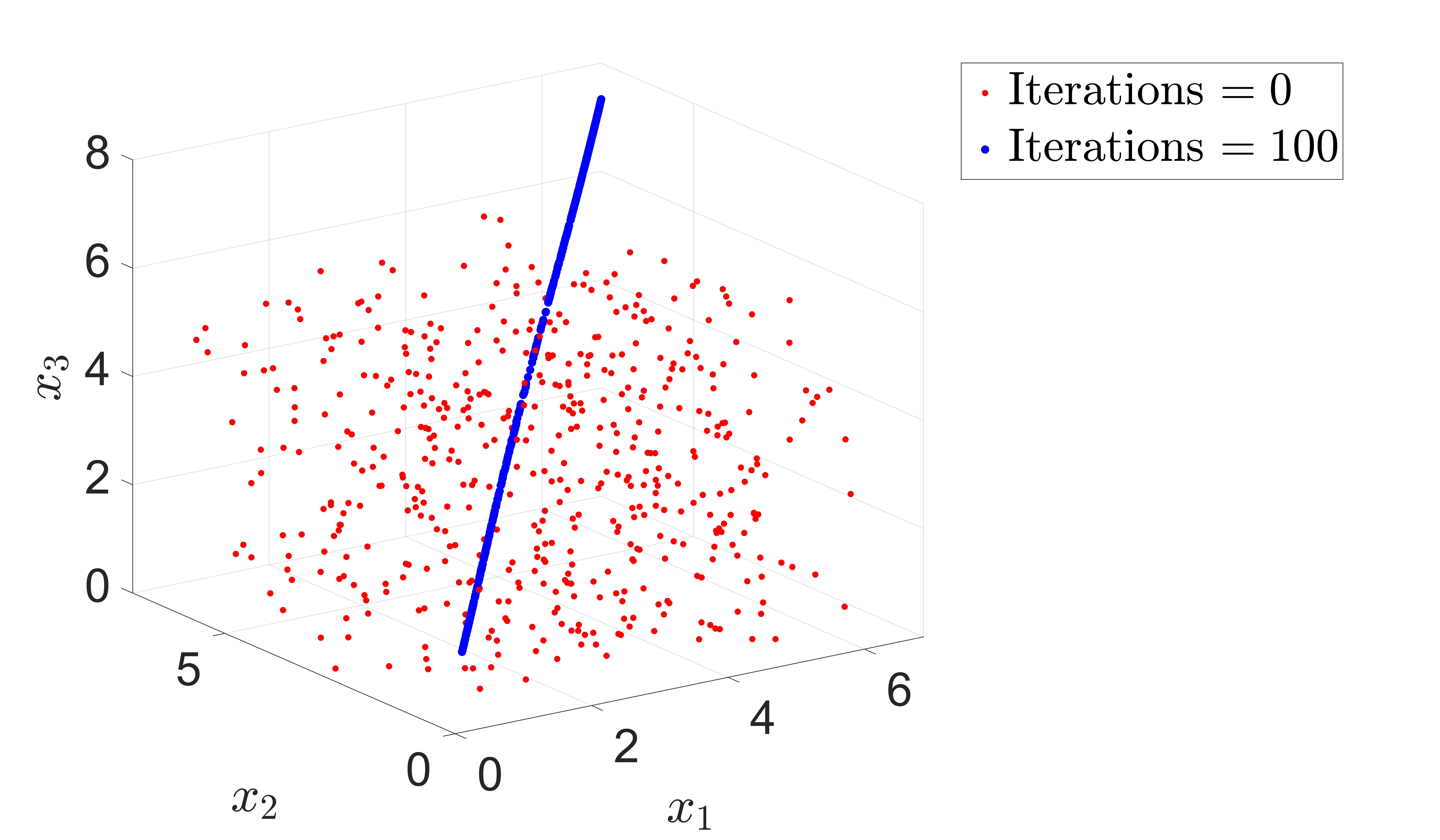}
            \caption{A total order structure in $\mathbb{R}^{3}$}
            \label{sim-fig-psi3}
        \end{figure}

        \begin{figure}[H]
            \begin{minipage}[t]{0.45\textwidth}
                \centering
                \includegraphics[width=1.0\textwidth]{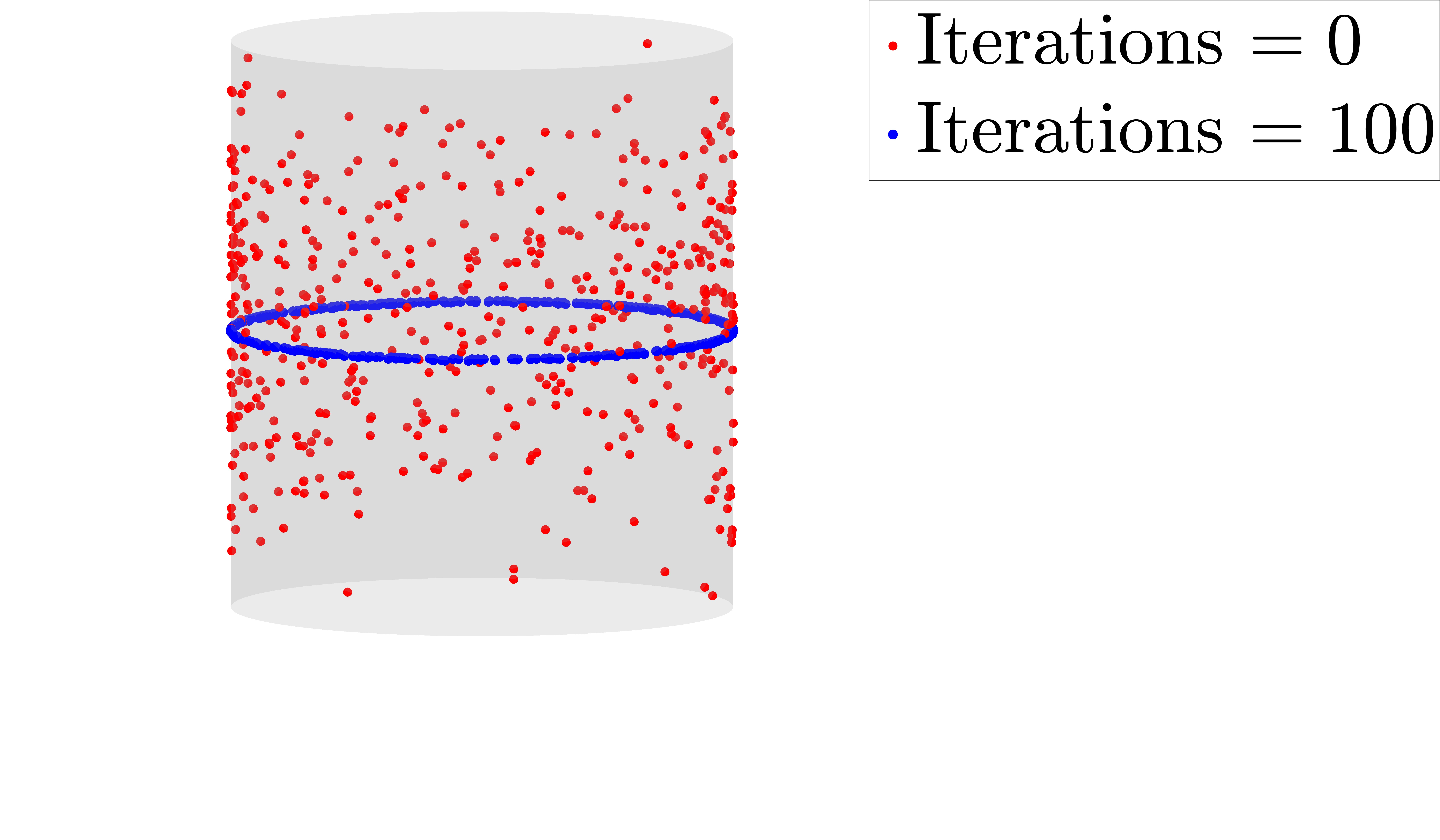}\\
                \caption    {The projections of the wrapping along $(2\pi,2\pi,2\pi)^{\top}$ on $\{y\in\mathbb{R}^4\mid y_{4}=0\}$}
                \label{sim-fig-tildepsi3-123}
            \end{minipage}
            \hfill
            \begin{minipage}[t]{0.45\textwidth}
                \centering
            \includegraphics[width=1.0\textwidth]{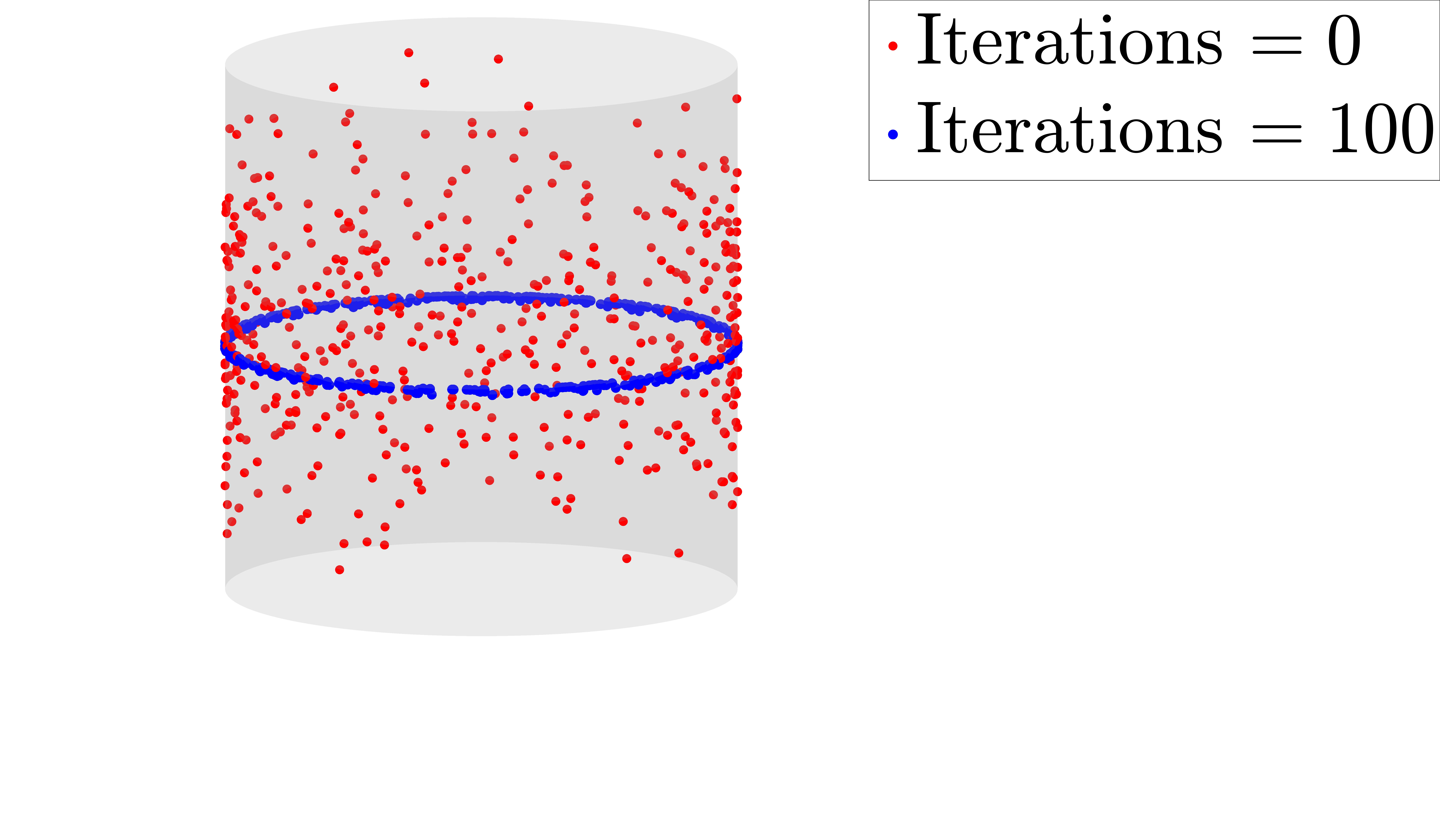}
                \caption {The projections of the wrapping along $(2\pi,2\pi,2\pi)^{\top}$  on  $\{y\in\mathbb{R}^4\mid y_{3}=0\}$}
                \label{sim-fig-tildepsi3-124}
            \end{minipage}
        \end{figure}



\end{document}